\theoremstyle{plain} 
\newtheorem{thm}{Theorem}[section]
\newtheorem{cor}[thm]{Corollary}
\newtheorem{prop}[thm]{Proposition}
\newtheorem{lem}[thm]{Lemma}
\newtheorem{conj}{Conjecture}
\theoremstyle{definition}
\newtheorem{defi}[thm]{Definition}
\newtheorem{remark}[thm]{Remark}
\newtheorem{remarks}[thm]{Remarks}
\newtheorem{ex}[thm]{Example}
\newcommand{\coatom}{\operatorname{coatom}}
\newcommand{\atom}{\operatorname{atom}}
\newcommand{\rank}{\operatorname{rk}}
\newcommand{\cone}{\operatorname{cone}}
  \newcommand{\Sym}[1]{\mathcal{S}_{#1}}
\newcommand{\Symn}{\Sym{n}}
\newcommand{\NN}{\mathbb{N}}
\DeclareMathOperator{\std}{std}
\newcommand{\rest}[2]{#1_{|#2}}
\newcommand{\srest}[2]{\std(#1_{|#2})}
\DeclareMathOperator{\invs}{Inv}
\newcommand{\dec}[1]{\widetilde{#1}}
\def\R{{\overrightarrow{n}}}
\def\L{{\overleftarrow{n}}}
\def\d{d_R}
\newcommand{\Bsetnn}{[\![\bar{n},n ]\!]^*}
\newcommand{\Bsetkk}{[\![\bar{k},k ]\!]^*}
\newcommand{\Bstd}{\std^{B}}
\newcommand{\Bsrest}[2]{\Bstd(\rest{#1}{#2})}
\def\F{\mathcal{F}}
\newcommand{\red}[1]{\textcolor{red}{#1}}
\newcommand{\blue}[1]{\textcolor{blue}{#1}}
\renewcommand\paragraph{\@startsection{paragraph}{4}{\z@}{2ex \@plus.5ex \@minus.2ex}{-1em}{\normalfont\normalsize\bfseries}}
\author[C. Hohlweg]{Christophe~Hohlweg}
\address[Christophe Hohlweg]{Universit\'e du Qu\'ebec \`a Montr\'eal\\
LaCIM et D\'epartement de Math\'ematiques\\ CP 8888 Succ. Centre-Ville\\
Montr\'eal, Qu\'ebec, H3C 3P8\\ Canada}
\email{hohlweg.christophe@uqam.ca}
\urladdr{http://hohlweg.math.uqam.ca}
\author[V. Pons]{Viviane~Pons}
\address[Viviane Pons]{Universit\'e Paris-Saclay, CNRS, Laboratoire Interdisciplinaire des Sciences du Num\'erique, Orsay, France.}
\email{viviane.pons@lisn.upsaclay.fr}
\title[A conjecture on descents, inversions and the weak order]{A conjecture on descents, inversions and the weak order on Coxeter groups}
\keywords{Coxeter groups, inversion sets, weak order}
\thanks{This work was partially supported by the NSERC  grant {\em combinatorics of infinite Coxeter groups} held by Hohlweg.}
\subjclass[2020]{Primary 20F55; secondary 05A05, 17B22; 05E16}
\begin{document}

\begin{abstract} In this article, we discuss the notion of partition of elements in an arbitrary Coxeter system $(W,S)$: a partition of an element $w$ is a subset $\mathcal P\subseteq W$ such that the left inversion set of $w$ is the disjoint union of the left inversion set of the elements in $\mathcal P$. Partitions of elements of $W$ arises in the study of the Belkale-Kumar product on the cohomology $H^*(X,\mathbb Z)$, where $X$ is  the complete flag variety of any complex semi-simple algebraic group. Partitions of elements in the symmetric group $\mathcal S_n$ are also related to the {\em Babington-Smith model} in algebraic statistics or to the simplicial faces of the Littlewood-Richardson cone. 

Moreover, we state and discuss the conjecture that the number of right descents of $w$ is the  sum of the number of right descents of the elements of~$\mathcal P$. In particular: (1) we state an equivalent conjecture in term of the number of atoms  in the interval $[e,w]_R$ of the right weak order, with a word-metric flavour; (2) we give a direct proof that this conjecture holds in the cases of symmetric groups (type $A$) and hyperoctahedral groups (type $B$). 
\end{abstract}
\date{\today}
\maketitle

\setcounter{tocdepth}{1}
\tableofcontents

\section{Introduction}   Let $(W,S)$ be a Coxeter system with length function $\ell:W\to \mathbb N$. The {\em set of reflections} of $W$ is the set:
$
T=\bigcup_{w\in W}wSw^{-1}.
$
 The {\em inversion set} (as a subset of reflections) of $w\in W$ is:
 $$
 T(w) = \{t \in T\mid \ell(tw)<\ell(t)\}.
 $$

\begin{defi}[Partitions of elements of $W$]\label{def:1} Let $w\in W$. A {\em partition of $w$} is a subset $\mathcal P\subseteq W$ such that:
$$
T(w)=\bigsqcup_{u\in \mathcal P} T(u) \quad\textrm{(disjoint union)}.
$$
We use the following terminology:
\begin{itemize}
\item A {\em $k$-partition of $w$} is a partition of $w$ of cardinality $k\in\mathbb N^*$. 
\item A partition $\mathcal P$ of $w\in W$ is called {\em a proper partition of $w$} if $e\notin \mathcal P$; $\{e,w\}$ is a $2$-partition of $w$ that is not proper.
\item A proper $2$-partition of $w$ is called a {\em bipartition of $w$}. 
\item The element $w$ is {\em partition-irreducible} if it does not admit any proper $k$-partition with $k>1$. By convention the identity $e$ is partition-irreducible. 
\end{itemize}
\end{defi}

Partitions of elements of $W$ are called {\em decompositions of inversion sets} in \cite{DeDiRo17} and {\em inv-decomposition} in \cite{Ka13}. Partitions of elements of $W$ arises in the study of the Belkale-Kumar product~\cite{BeKu06} on the cohomology $H^*(X,\mathbb Z)$, where $X$ is  the complete flag variety of any complex semi-simple algebraic group; see also \cite{DiRo09,DiRo17}. Partitions of elements in the symmetric group $\mathcal S_n$ are also related to the {\em Babington-Smith model} in algebraic statistics, see \cite{Ka13} for more details, and to the simplicial faces of the Littlewood-Richardson cone~\cite{DeDiRo17}.

\smallskip
 The {\em right descent set of $w\in W$} is $D_R(w)=\{s\in S \mid \ell(ws)<\ell(w)\}$; the number of right descent of $w$ is  denoted by $d_R(w)$  The aim of this article is to study the following conjecture. 

\begin{conj}\label{conj:1}  Let  $(W,S)$ be a Coxeter system. If  $\{u,v\}$ is a bipartition of $w\in W$, then $d_R(w)=d_R(u)+d_R(v)$. 
\end{conj}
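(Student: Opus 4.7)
The plan is to reformulate descents in root-theoretic terms and then argue by induction on $\ell(v)$. Identify $T(x)$ with $N(x)=\Phi^+\cap x\Phi^-$; a right descent $s\in D_R(x)$ is encoded by the positive root $-x\alpha_s\in N(x)$, and the roots arising in this way are exactly those $\beta\in N(x)$ for which $N(x)\setminus\{\beta\}$ is again an inversion set (equivalently, biclosed). Call such a root \emph{detachable}. Then $d_R(x)$ is the number of detachable roots of $N(x)$, and the conjecture becomes: under a bipartition $N(w)=N(u)\sqcup N(v)$, the total number of detachable roots of $N(w)$ equals the sum of those of $N(u)$ and $N(v)$.

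Small examples in $\Sym{3}$ show that this is genuinely a counting equality, not a matching of reflections: for $w_0$ with bipartition $\{[2,3,1],[1,3,2]\}$, the sole detachable root of $[2,3,1]$ is $(1,3)$, while the detachable roots of $w_0$ lying in $T([2,3,1])$ are $\{(1,2)\}$, so one cannot hope for a bijection $\beta\mapsto\beta$. The inductive strategy I propose relies on two lemmas for any proper bipartition $\{u,v\}$ of $w$: \textbf{(A)} some root $\beta$ is detachable for $N(w)$ and simultaneously for $N(u)$ or for $N(v)$; and \textbf{(B)} writing $v',w'$ for the elements with $N(v')=N(v)\setminus\{\beta\}$ and $N(w')=N(w)\setminus\{\beta\}$, one has $d_R(w)-d_R(w')=d_R(v)-d_R(v')$. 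Granting (A) and (B), the pair $\{u,v'\}$ is a bipartition of $w'$ with $\ell(v')<\ell(v)$, so the induction hypothesis yields $d_R(w')=d_R(u)+d_R(v')$, and (B) extends this to $d_R(w)=d_R(u)+d_R(v)$.

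For type A I would represent inversion sets as biclosed subsets of $\{(a,b):1\le a<b\le n\}$ (closed: $(a,b),(b,c)\in I\Rightarrow(a,c)\in I$; coclosed: $(a,c)\in I$ with $a<b<c$ implies $(a,b)\in I$ or $(b,c)\in I$), and identify detachable roots as pairs $(a,b)\in T(x)$ with $b$ immediately before $a$ in the one-line notation of $x$. Lemma (A) reduces to showing that not every right descent of $w$ is ``created'' by the interaction of $T(u)$ and $T(v)$; Lemma (B) reduces to the local identity that the number of intermediate values $c\in(a_0,b_0)$ sitting immediately next to $b_0$ or $a_0$ is the same in $v$ as in $w$, established by a triple-by-triple case analysis over $\{a_0,b_0,c\}$ using biclosedness of $T(u),T(v),T(w)$ together with disjointness $T(u)\cap T(v)=\emptyset$. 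Type B proceeds identically, with signed pairs from $\Bsetnn$ replacing $(a,b)$ and extra cases accounting for the type-B sign-change simple reflection.

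The main obstacle is Lemma (B). The positions of an intermediate value $c\in(a_0,b_0)$ in $u,v,w$ can differ significantly in general, so one must leverage biclosedness of all three sets together to force the local neighbors of the descent pair to behave the same way in $v$ and in $w$. This case analysis is the technical heart of the proof and, in type B, grows considerably because the sign-change reflections introduce several additional sub-cases.
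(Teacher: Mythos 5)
Your root-theoretic reformulation is fine: $d_R(x)$ does equal the number of $\beta\in N(x)$ with $N(x)\setminus\{\beta\}$ biclosed (these are exactly the roots $-x\alpha_s$ for $s\in D_R(x)$, i.e.\ the coatoms of $[e,x]_R$). The fatal problem is that your Lemma~(A) is false, already in the types you claim to treat. Take $W=\Sym{4}$, $w=w_\circ=4321$, $u=2413$, $v=3142$. Then $\invs(u)=\{(1,2),(1,4),(3,4)\}$ and $\invs(v)=\{(1,3),(2,3),(2,4)\}$, so $\{u,v\}$ is a bipartition of $w$ (indeed $v=uw_\circ$). The detachable roots of $N(w)$ are the simple roots $(1,2),(2,3),(3,4)$ coming from the adjacent descent pairs of $4321$; but the only detachable root of $u$ is $(1,4)$ and the detachable roots of $v$ are $(1,3)$ and $(2,4)$. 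So \emph{no} root is detachable simultaneously for $w$ and for the part containing it, and your induction cannot take its first step. The same failure occurs in type $B$: in $W_2=I_2(4)$ with $w=w_\circ$, $u=\tau_0\tau_1$, $v=\tau_1\tau_0$, the detachable roots of $w_\circ$ correspond to the reflections $\tau_0$ and $\tau_1$, while those of $u$ and $v$ correspond to $\tau_0\tau_1\tau_0$ and $\tau_1\tau_0\tau_1$. (In any dihedral $I_2(m)$ the detachable roots of $w_\circ$ are the two extremes of the reflection order while the detachable root of a length-$i$ prefix is the $i$-th root, so (A) fails for every proper bipartition with $2\le i\le m-2$.) Your own $\Sym{3}$ observation was the warning sign: detachability simply does not restrict to the parts, and here it fails on both parts at once. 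Since (A) is what produces the pair $(\beta,v')$ on which everything rests, Lemma~(B) never gets to be tested and the argument collapses. A repair that peels a descent of $u$ or $v$ alone does not work either, since then $N(w)\setminus\{\beta\}$ need not be biclosed and the bipartition structure is lost.

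For contrast, the paper's proof never tries to remove a single inversion compatible with both $w$ and a part. It inducts on $n$, cutting the one-line words at the position of the maximal letter $n$ via $\R(w)=\{a\mid (a,n)\in\invs(w)\}\cup\{n\}$ and its complement, shows that the standardized left and right blocks inherit the bipartition and that descents add over the blocks (Propositions~\ref{prop:decomposition} and~\ref{prop:B-decomposition}), and treats the boundary case $w_1=n$ by deleting the letter $n$ (Proposition~\ref{prop:decreasing}); type $B$ is first reduced, via multiplication by $w_\circ$ (Proposition~\ref{prop:B-pos}), to the case where $n$ sits in a positive position. If you want to keep a ``remove one root at a time'' scheme, you would first need a true replacement for (A), and the examples above show none is available at the level of single roots.
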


Conjecture~\ref{conj:1} is a generalization of a conjecture formulated by Ressayre in the case of finite Weyl groups, which the first author was made aware of in~2014; see \S\ref{ss:Weyl} for more details on Ressayre's original conjecture.

The validity of Conjecture~\ref{conj:1} guarantees such an equality of sum of right descent sets also for arbitrary partitions of an element of $W$; the following proposition  is proven in \S\ref{se:1}.

\begin{prop}\label{prop:Conj} Let $(W,S)$ be a Coxeter system for which Conjecture~\ref{conj:1} holds. Let $\{u_1,\dots,u_k\}$ be a $k$-partition of $w$. Then $d_R(w)=\sum_{i=1}^k d_R(u_i)$.
\end{prop}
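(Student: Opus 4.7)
The plan is induction on $k$. The base case $k=1$ is trivial since $T(w)=T(u_1)$ forces $u_1=w$ (inversion sets separate elements of $W$), so $d_R(w)=d_R(u_1)$. For the inductive step $k\geq 2$, the key ingredient is the following \emph{Key Lemma}: for every non-empty $J\subseteq\{1,\dots,k\}$, the set $\bigsqcup_{i\in J} T(u_i)$ equals $T(v_J)$ for some $v_J\in W$.

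Granting the Key Lemma, apply it with $J=\{2,\dots,k\}$ to obtain $v\in W$ with $T(v)=T(u_2)\sqcup\cdots\sqcup T(u_k)$. Then $\{u_1,v\}$ is a bipartition of $w$, so the assumed Conjecture~\ref{conj:1} yields $d_R(w)=d_R(u_1)+d_R(v)$. Moreover, $\{u_2,\dots,u_k\}$ is a $(k-1)$-partition of $v$, so the induction hypothesis gives $d_R(v)=\sum_{i=2}^{k}d_R(u_i)$. Adding these two equalities yields the desired $d_R(w)=\sum_{i=1}^{k}d_R(u_i)$.

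To prove the Key Lemma, I would pass to the geometric representation of $(W,S)$ and work with positive-root inversion sets $N(u)\subseteq\Phi^+$, using the classical equivalence that a finite subset $A\subseteq\Phi^+$ equals $N(v)$ for some $v\in W$ if and only if $A$ is \emph{biclosed}, meaning both $A$ and $\Phi^+\setminus A$ are closed under the operation $(\alpha,\beta)\mapsto\gamma$ whenever $\gamma=a\alpha+b\beta\in\Phi^+$ with $a,b>0$. Setting $A:=\bigsqcup_{i\in J}N(u_i)\subseteq N(w)$, co-closedness of $A$ is immediate: if $\gamma=a\alpha+b\beta\in N(u_i)\subseteq A$ with $\alpha,\beta\notin A$, then the co-closedness of $N(u_i)$ forces $\alpha\in N(u_i)$ or $\beta\in N(u_i)\subseteq A$, a contradiction. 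For closedness, suppose $\alpha\in N(u_i)$ and $\beta\in N(u_j)$ with $i,j\in J$ and $\gamma=a\alpha+b\beta\in\Phi^+$; since $N(w)$ is closed we have $\gamma\in N(w)$, hence $\gamma\in N(u_l)$ for some $l$. If $l\notin J$ then $l\neq i,j$, so disjointness of the parts gives $\alpha,\beta\notin N(u_l)$, contradicting co-closedness of $N(u_l)$; therefore $l\in J$ and $\gamma\in A$.

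The main obstacle is invoking the classical equivalence between finite biclosed subsets of $\Phi^+$ and left inversion sets of $W$. For finite Coxeter systems this is equivalent to the lattice property of the (left) weak order; for general Coxeter systems the statement restricted to finite biclosed subsets is due to Dyer. Once this is granted the argument is short: the only combinatorial input beyond Conjecture~\ref{conj:1} is checking the two biclosedness conditions above, and both reduce to the (co-)closedness of the individual parts $N(u_i)$ together with the disjointness built into the definition of a partition.
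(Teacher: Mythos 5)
Your proposal is correct and follows essentially the same route as the paper: the paper's Lemma~\ref{lem:PropConj} is exactly your Key Lemma in the special case $J=\{2,\dots,k\}$ (the only case needed), proved the same way by verifying that $\bigsqcup_{i\in J}\Phi(u_i)$ is biclosed using the biclosedness of the individual parts and of $\Phi(w)$, and then invoking the Dyer-type correspondence between finite biclosed sets and inversion sets (Proposition~\ref{prop:Biclos}~(4)) before concluding by Conjecture~\ref{conj:1} and induction on $k$. The only cosmetic differences are that you state the lemma for arbitrary $J$ and organize the closure check slightly differently (arguing over all $l\notin J$ rather than splitting into $\gamma\in\Phi(u_1)$ versus $\gamma\notin\Phi(w)$); neither affects the substance.
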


As far as we know, Conjecture~\ref{conj:1} holds for dihedral groups (Example~\ref{ex:1} below), for universal Coxeter systems, since all elements in that case are partition-irreducible  (Proposition~\ref{prop:Conj1True}), as well as for finite Weyl group, as a consequence of~\cite{FrRe23}. However, in the case of finite Weyl group, no direct\footnote{A proof involving solely argument from the theory of Coxeter groups.} proof of Conjecture~\ref{conj:1} is known, see the discussion in \S\ref{ss:Weyl}.  

 In this article we provide a direct proof of Conjecture~\ref{conj:1} for type $A$ in \S\ref{se:Sn} and for type $B$ in \S\ref{se:Bn}, which we believe have a some potential to be generalized to arbitrary Coxeter systems (see Remark~\ref{rem:GeneralizeAB}).  We have also made numerous computations with Sagemath~\cite{sage} to check the validity of Conjecture~\ref{conj:1} in infinite Coxeter groups, see \S\ref{ss:Computations}. 

\begin{ex}\label{ex:1}  Let $W$ be a dihedral group $\mathcal D_m$ ($m\in\mathbb N_{\geq 2}\cup\{\infty\})$ generated by $S=\{s,t\}$ with Coxeter graph:
\begin{center}
\begin{tikzpicture}[sommet/.style={inner sep=2pt,circle,draw=blue!75!black,fill=blue!40,thick}]
	\node[sommet,label=above:$s$] (alpha) at (0,0) {};
	\node[sommet,label=above:$t$] (beta) at (1,0) {} edge[thick] node[auto,swap] {$m$} (alpha);
\end{tikzpicture}
\end{center}
If $m\not=\infty$, only the longest element $w_\circ = sts\cdots=tst\cdots$ ($m=\ell(w_\circ)$ letters)  admits proper bipartitions. Indeed, if $w\in W$ admits a proper partition $\mathcal P$ with $|\mathcal P|>1$, then each for $u\in\mathcal P$, $T(u)$ contains at least an element of $S$ (a left descent), since $e\not\in \mathcal P$. Because the union of the inversion sets of elements in a partition is disjoint,   $|\mathcal P|=2$ and $s,t\in T(u)\sqcup T(v)=T(w)$. So $w=w_\circ$.  All the proper bipartitions of $w_\circ$ are therefore of the form $\{u=st\cdots, v=ts\cdots\}$ with $\ell(u)=i$, $\ell(v)=m-i$ and $1\leq i<m$.   Therefore we have $d_R(w_\circ)=2=1+1=d_R(u)+d_R(v)$.
In particular, the set of partition-irreducible elements is $W\setminus\{w_\circ\}$ in this case.

The same argument as above shows that, if $m=\infty$, then all elements of $W$ are partition-irreducible. 
\end{ex}

We show in Proposition~\ref{prop:Longest} that the longest element $w_\circ$ of a finite Coxeter system is never partition-irreducible and that it satisfies Conjecture~\ref{conj:1}.

\begin{ex}\label{ex:2} Consider $W$ to be the symmetric group $\mathcal S_4$  generated by the simple transpositions $S=\{\tau_1,\tau_2,\tau_3\}$. Therefore $(\mathcal S_4,S)$ is a Coxeter graph of type $A_{n-1}$, i.e., its  Coxeter graph is:
\begin{center}
\begin{tikzpicture}[sommet/.style={inner sep=2pt,circle,draw=blue!75!black,fill=blue!40,thick}]
	\node[sommet,label=above:$\tau_1$] (alpha) at (0,0) {};
	\node[sommet,label=above:$\tau_2$] (beta) at (1,0) {} edge[thick] node[auto,swap] {} (alpha);
	\node[sommet,label=above:$\tau_3$] (gamma) at (2,0) {} edge[thick] node[auto,swap] {} (beta);
\end{tikzpicture}
\end{center}
The set of reflections is $T=S\sqcup\{\tau_{(13)}=\tau_1\tau_2\tau_1,\tau_{(24)}=\tau_2\tau_3\tau_2,\tau_{(14)}=\tau_1\tau_2\tau_3\tau_2\tau_1\}$. 

Consider $w=\tau_{(14)}=\tau_3\tau_2\tau_1\tau_2\tau_3$. Then $w$ has two proper bipartitions:  
$$
\{u=\tau_1\tau_2,v=\tau_3\tau_2\tau_1\}\textrm{ and }\{\tau_3\tau_2,\tau_1\tau_2\tau_3\}.
$$ 
Indeed, for $\{u,v\}$ for instance we have:
$$
T(w)=\{\tau_1, \tau_{(13)}\}\sqcup\{\tau_{(14)},\tau_{(24)},\tau_{3}\}=T(\tau_1\tau_2)\sqcup T(\tau_3\tau_2\tau_1)=T(u)\sqcup T(v).
$$
Then $D_R(w)=\{\tau_1,\tau_3\}$, $D_R(u)=\{\tau_2\}$, $D_R(v)=\{\tau_1\}$ and $d_R(w)=d_R(u)+d_R(v)$.

Consider now $w'=\tau_1\tau_3\tau_2\tau_1=\tau_3\tau_2\tau_1\tau_2$. Then $\{\tau_1,v\}$ is a bipartition of $w'$ since:
$$
T(w')=\{\tau_1\}\sqcup\{\tau_{(14)},\tau_{(24)},\tau_{3}\}=T(\tau_1)\sqcup T(v).
$$
Again in this case we have $d_R(w)=d_R(u)+d_R(v)$. Notice that $D_R(w')=\{\tau_1,\tau_2\}$ and $D_R(\tau_1)=\{\tau_1\}=D_R(v)$.
\end{ex}

\begin{remark} If $\{u,v\}$ is a bipartition of $w$,  $D_R(w)\not = D_R(u)\sqcup D_R(v)$ in general, as observed in Example~\ref{ex:2} above. 
\end{remark}

\subsection{Conjecture~\ref{conj:1} and the right weak order}\label{ss:Weak}   The computations to check the validity of Conjecture~\ref{conj:1} with Sagemath~\cite{sage}, for which we present some enumerative results in~\S\ref{ss:Computations},  were made using an equivalent description for bipartitions of elements  in intervals in the (right) weak order, endowed with the word-metric. Recall that the {\em (right) weak order} is the poset $(W,\leq_R)$  defined by $u\leq_R w$ if and only if $u$ is a {\em prefix} of $w$, i.e., a reduced word for $u$ appear as the prefix of a reduced word for $w$. This is equivalent to consider the right Cayley graph of $(W,S)$ with the edges oriented as followed: an edge labeled by $s\in S$ is oriented from $w$ to $ws$ if and only if $s\notin D_R(w)$, if and only if $\ell(ws)>\ell(w)$. 

It is well-known that the Cayley graph of $(W,S)$ is a metric space for the {\em word metric} $d:W\times W\to \mathbb N$ defined by  $d(u,v)=\ell(u^{-1}v)$. 

\begin{defi}\label{def:2} Let $w\in W$. A subset $\{u,v\}$ of the interval  $[e,w]_R$ in $(W,\leq_R)$ is a {\em diameter of  $[e,w]_R$} if $d(u,v)=\ell(w)$. In particular, the pair $\{e,w\}$ is a diameter. The interval $[e,w]_R$ is called {\em rectangled} if it has at least two diameters. 
\end{defi}

An example of a rectangled interval is given in Figure~\ref{fig:D4}. It turns out that $\ell(w)$ is the maximum of all $d(u,v)$ for $u,v\in [e,w]_R$ (Proposition ~\ref{cor:MaxInt}),  hence the choice of the term {\em diameter}. We show in~\S\ref{ss:Rectangled} the following proposition, which provide a first answer to  \cite[Question~6]{DiRo09}.

\begin{prop}\label{prop:rectangled} Let  $u,v,w\in W$, then $\{u,v\}$ is a bipartition of $w$ if and only if $\{u,v\}$ is a diameter of $[e,w]_R$. In particular, the interval $[e,w]_R$ is rectangled if and only if $w$ is not partition-irreducible.
\end{prop}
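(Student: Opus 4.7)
The plan is to reduce both equivalences in the statement to set-theoretic identities on left inversion sets, via the key distance formula
\[
d(u,v) \;=\; |T(u) \triangle T(v)| \qquad \text{for all } u,v\in W.
\]
To prove it, I would write $v=u\cdot(u^{-1}v)$ and invoke the standard multiplication rule for left inversion sets, $T(xy)=T(x)\triangle x\,T(y)\,x^{-1}$; substituting yields $T(u)\triangle T(v)=u\,T(u^{-1}v)\,u^{-1}$. Since conjugation is a bijection of the reflection set $T$, taking cardinalities gives $|T(u)\triangle T(v)|=\ell(u^{-1}v)=d(u,v)$. This multiplication rule is the only non-elementary ingredient; it is a classical lemma on inversion sets.

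With the distance formula in hand, the forward implication is immediate: if $T(w)=T(u)\sqcup T(v)$ then $T(u),T(v)\subseteq T(w)$, so $u,v\leq_R w$, and disjointness gives $T(u)\triangle T(v)=T(u)\cup T(v)=T(w)$, whence $d(u,v)=\ell(w)$. For the converse, suppose $\{u,v\}\subseteq[e,w]_R$ with $d(u,v)=\ell(w)$. Then $T(u),T(v)\subseteq T(w)$, hence $T(u)\triangle T(v)\subseteq T(w)$; equality of their (finite) cardinalities forces $T(u)\triangle T(v)=T(w)$. Combining the identity $|A\triangle B|=|A\cup B|-|A\cap B|$ with $|T(u)\cup T(v)|\leq|T(w)|=|T(u)\triangle T(v)|$ then yields both $T(u)\cap T(v)=\emptyset$ and $T(u)\cup T(v)=T(w)$, so that $\{u,v\}$ is a bipartition of $w$.

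For the ``rectangled'' part, observe that whenever $w\neq e$ the trivial pair $\{e,w\}$ is both a bipartition (since $T(e)=\emptyset$) and a diameter. Hence $[e,w]_R$ is rectangled exactly when a second diameter exists, which by the established correspondence is the same as the existence of a bipartition $\{u,v\}\neq\{e,w\}$; any such bipartition is automatically proper, since $u=e$ would force $T(v)=T(w)$ and hence $v=w$. The degenerate case $w=e$ is trivial, as $[e,e]_R$ is a singleton and $e$ is partition-irreducible by convention. The main obstacle in the whole argument is thus just the symmetric-difference multiplication formula for inversion sets.
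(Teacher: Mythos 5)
Your proof is correct and follows essentially the same route as the paper: your key identity $d(u,v)=|T(u)\triangle T(v)|$ is exactly the paper's Lemma~\ref{lem:MaxInt}, derived from the same cocycle formula $T(xy)=T(x)+xT(y)x^{-1}$, and both directions of the equivalence then reduce to the same cardinality comparison (the paper routes the forward direction through the Brink--Howlett criterion, Lemma~\ref{lem:BH}, but that is a cosmetic difference). One small caveat on the ``in particular'' clause: you establish that $[e,w]_R$ is rectangled iff $w$ admits a proper \emph{bi}partition, whereas partition-irreducibility is defined via proper $k$-partitions for arbitrary $k>1$, so fully closing the equivalence requires knowing that a proper $k$-partition can be merged into a proper bipartition --- this is the paper's Lemma~\ref{lem:PropConj}.
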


The above proposition allows for two useful reformulations of Conjecture~\ref{conj:1}.  For a non-empty interval $[u,v]_R$ in the right weak order $(W,\leq_R)$, we denote by:
\begin{itemize}
\item $\atom([u,v]_R)$ the number of  {\em atoms} in $[u,v]_R$, i.e., the number of elements in $[u,v]_R$ that cover  $u$; 
\item $\coatom([u,v]_R)$ the number of  {\em coatoms} in $[u,v]_R$, i.e., the number of elements in $[u,v]_R$ that are covered by $v$.
\end{itemize}

\begin{conj}\label{conj:2} Let $u,v,w\in W$ such that $\{u,v\}$ is a diameter of $[e,w]_R$. Then $\coatom([e,w]_R)= \coatom([e,u]_R) + \coatom([e,v]_R)$.
\end{conj}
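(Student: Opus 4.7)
The plan is to prove Conjecture~\ref{conj:2} by showing it is literally a reformulation of Conjecture~\ref{conj:1}: term by term, the two sides of the displayed equality in Conjecture~\ref{conj:2} translate into the two sides of the equality in Conjecture~\ref{conj:1}, and the hypothesis ``$\{u,v\}$ is a diameter of $[e,w]_R$'' translates into ``$\{u,v\}$ is a bipartition of $w$''. Granting these two translations, Conjecture~\ref{conj:2} holds exactly when Conjecture~\ref{conj:1} does; in particular, it will be established for dihedral and universal Coxeter systems and, via Theorem~\ref{thm:Main}, for types $A$ and $B$.

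The translation of the hypothesis is given by Proposition~\ref{prop:rectangled}, which I may assume. For the translation of the conclusion it suffices to verify, for every $g\in W$, the identity $d_R(g)=\coatom([e,g]_R)$. This is a standard feature of the right weak order that I would establish as follows: an element $x\in[e,g]_R$ is covered by $g$ in $(W,\leq_R)$ if and only if $\ell(x)=\ell(g)-1$ and $x\leq_R g$, which by the characterization of $\leq_R$ via prefixes of reduced words is in turn equivalent to $x=gs$ for some $s\in S$ with $\ell(gs)<\ell(g)$, i.e.\ $s\in D_R(g)$. Hence the assignment $s\mapsto gs$ is a bijection from $D_R(g)$ onto the set of coatoms of $[e,g]_R$, and taking cardinalities gives the desired identity.

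Combining these two ingredients, if $\{u,v\}$ is a diameter of $[e,w]_R$ then by Proposition~\ref{prop:rectangled} it is a bipartition of $w$, and the identity $d_R(w)=d_R(u)+d_R(v)$ given by Conjecture~\ref{conj:1} rewrites, via the bijection above applied to $w$, $u$ and $v$, as
$$\coatom([e,w]_R)=\coatom([e,u]_R)+\coatom([e,v]_R).$$
The converse goes through identically. The main obstacle here is not the equivalence itself, which is essentially formal, but rather Conjecture~\ref{conj:1}; the substantive work in the article is the direct proof of that statement in types $A$ and $B$ carried out in the subsequent sections.
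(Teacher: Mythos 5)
Your proposal is correct and follows essentially the same route as the paper: the paper derives Conjecture~\ref{conj:2} from Conjecture~\ref{conj:1} by invoking Proposition~\ref{prop:rectangled} to identify diameters of $[e,w]_R$ with bipartitions of $w$, together with the identity $d_R(g)=\coatom([e,g]_R)$, which the paper states without proof and which you justify correctly via the bijection $s\mapsto gs$ from $D_R(g)$ onto the coatoms of $[e,g]_R$. As you note, the substantive content is Conjecture~\ref{conj:1} itself, established in the paper for types $A$ and $B$ by Theorem~\ref{thm:Main}.
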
 

\begin{conj}\label{conj:3} Let $u,v,w\in W$ such that $\{u,v\}$ is a diameter of $[e,w]_R$. Then $\atom([e,w]_R)= \atom([u,w]_R) + \atom([v,w]_R)$.
\end{conj}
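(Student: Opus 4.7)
The plan is to prove Conjecture~\ref{conj:3} by showing it is a dual reformulation of Conjecture~\ref{conj:2}, using the classical anti-isomorphism of weak-order intervals
\[
\phi_w \colon [e,w]_R \longrightarrow [e,w^{-1}]_R, \qquad \phi_w(x) = w^{-1}x.
\]
Since the equivalence of Conjectures~\ref{conj:1} and~\ref{conj:2} is immediate from $d_R(g) = \coatom([e,g]_R)$, this will complete the chain Conjecture~\ref{conj:1} $\Longleftrightarrow$ Conjecture~\ref{conj:2} $\Longleftrightarrow$ Conjecture~\ref{conj:3}, so that Conjecture~\ref{conj:3} holds unconditionally in types $A$ and $B$ by Theorem~\ref{thm:Main}.

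First I would record three basic properties of $\phi_w$: (i) it is an order-reversing bijection between the weak-order intervals $[e,w]_R$ and $[e,w^{-1}]_R$, with $\phi_w(e)=w^{-1}$ and $\phi_w(w)=e$; (ii) it preserves the word metric, since
\[
d(\phi_w(x),\phi_w(y)) = \ell\bigl((w^{-1}x)^{-1}(w^{-1}y)\bigr) = \ell(x^{-1}y) = d(x,y);
\]
(iii) its inverse is $\phi_{w^{-1}}$, so the construction is symmetric in $w$ and $w^{-1}$. Combining (i) and (ii), a pair $\{u,v\} \subseteq [e,w]_R$ is a diameter of $[e,w]_R$ if and only if $\{w^{-1}u,w^{-1}v\} \subseteq [e,w^{-1}]_R$ is a diameter of $[e,w^{-1}]_R$.

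Next I would convert atoms into coatoms. For $u \leq_R w$, property (i) implies that $\phi_w$ restricts to an order-reversing isomorphism $[u,w]_R \to [e,w^{-1}u]_R$. Because covering relations are reversed, atoms of the domain biject with coatoms of the codomain, giving
\[
\atom([u,w]_R) = \coatom([e,w^{-1}u]_R),
\]
and similarly for $v$ and for the special case $u=e$ (which yields $\atom([e,w]_R) = \coatom([e,w^{-1}]_R)$). Substituting these three identities into Conjecture~\ref{conj:3} applied to $(u,v,w)$ produces exactly the assertion of Conjecture~\ref{conj:2} for the triple $(w^{-1}u,w^{-1}v,w^{-1})$; invoking the symmetry in (iii), the converse implication holds as well, so the two conjectures are equivalent.

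I do not foresee a real obstacle, since the argument is entirely formal. The one point to verify carefully is that the trivial bipartition $\{e,w\}$ corresponds under $\phi_w$ to the trivial bipartition $\{e,w^{-1}\}$, so that proper bipartitions match up with proper bipartitions and no spurious case enters the translation between Conjectures~\ref{conj:2} and~\ref{conj:3}. With that bookkeeping in hand, Proposition~\ref{prop:IntWeak} is established and Conjecture~\ref{conj:3} follows from Conjecture~\ref{conj:1}.
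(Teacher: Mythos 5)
Your argument is correct and is essentially identical to the paper's: the authors also reduce Conjecture~\ref{conj:3} to Conjecture~\ref{conj:2} via the order-reversing isometry $\varphi_w(g)=w^{-1}g$ of Proposition~\ref{prop:IntWeak}, which sends diameters to diameters and converts $\atom([u,w]_R)$ into $\coatom([e,w^{-1}u]_R)$, and then invokes Theorem~\ref{thm:Main} in types $A$ and $B$. The only superfluous point is your worry about the trivial diameter $\{e,w\}$: both conjectures are stated for arbitrary diameters and hold trivially in that case, so no special bookkeeping is needed.
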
 

Since $d_R(g)=\coatom([e,g]_R)$ for any $g\in W$, Conjecture~\ref{conj:1} is equivalent to Conjecture~\ref{conj:2}, by Proposition~\ref{prop:rectangled}. However, Conjecture~\ref{conj:3} is a switch of perspective: it states if true that the number of {\em left descents} of $w$ is equal to the number of {\em right ascents} of $u$ plus the number of {\em right ascents} of $v$ such that the geodesics they start remains in the interval $[e,w]_R$. The following theorem is proven in \S\ref{ss:Rectangled}.

\begin{thm}\label{thm:Equivalent} Conjecture~\ref{conj:1}, Conjecture~\ref{conj:2} and Conjecture~\ref{conj:3} are equivalent. 
\end{thm}

Since Conjecture~\ref{conj:1} holds for finite Weyl groups, in particular for symmetric and hyperoctahedral groups,  we obtain the following corollary, for which it would be interesting to have a proof involving only weak order technics.

\begin{cor} Assume that $W$ is a finite Weyl group. Then for any~$w\in  W$ and $\{u,v\}$ a  diameter of $[e,w]_R$, we have:
\begin{enumerate}
\item $\atom([e,w]_R)= \atom([u,w]_R) + \atom([v,w]_R)$;
\item $\coatom([e,w]_R)= \coatom([e,u]_R) + \coatom([e,v]_R)$.
\end{enumerate}
\end{cor}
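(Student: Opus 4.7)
The plan is to derive the corollary directly from Theorem~\ref{thm:Main} by translating through the weak-order reformulations developed earlier. Since Theorem~\ref{thm:Main} establishes Conjecture~\ref{conj:1} in types $A$ and $B$, and the corollary is essentially Conjectures~\ref{conj:2} and~\ref{conj:3} specialized to these types, the proof reduces to assembling the stated equivalences.

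First, let $\{u,v\}$ be a diameter of $[e,w]_R$. By Proposition~\ref{prop:rectangled}, this is exactly the condition that $\{u,v\}$ is a bipartition of $w$, so Theorem~\ref{thm:Main} yields
$$
d_R(w)=d_R(u)+d_R(v).
$$

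For statement~(2), the key observation is that for any $g\in W$ the coatoms of $[e,g]_R$ are precisely the elements $gs$ with $s\in D_R(g)$, and hence $\coatom([e,g]_R)=d_R(g)$. Substituting $g=w,u,v$ in the displayed identity gives statement~(2) directly. For statement~(1), I would invoke the order-reversing isomorphism $[e,w]_R\to[e,w^{-1}]_R$, $x\mapsto w^{-1}x$, noted in the excerpt: a short word-metric computation, $d(w^{-1}u,w^{-1}v)=\ell((w^{-1}u)^{-1}w^{-1}v)=\ell(u^{-1}v)=d(u,v)$, shows that $\{u,v\}$ is a diameter of $[e,w]_R$ if and only if $\{w^{-1}u,w^{-1}v\}$ is a diameter of $[e,w^{-1}]_R$. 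Combined with the identities $\coatom([e,w]_R)=\atom([e,w^{-1}]_R)$ and $\coatom([e,x]_R)=\atom([w^{-1}x,w^{-1}]_R)$ for $x\leq_R w$, applying statement~(2) to the diameter $\{w^{-1}u,w^{-1}v\}$ of $[e,w^{-1}]_R$ transports the equality to statement~(1). Alternatively, one can quote Proposition~\ref{prop:IntWeak}, which packages precisely this equivalence between Conjectures~\ref{conj:2} and~\ref{conj:3}.

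The argument is essentially formal, so there is no real obstacle; the only care required is tracking which sub-intervals the atoms and coatoms belong to under the order-reversing bijection. That bookkeeping is already done once in Proposition~\ref{prop:IntWeak}, and the corollary is then an immediate consequence of Theorem~\ref{thm:Main}.
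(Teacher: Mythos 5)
Your proposal is correct and follows exactly the route the paper intends: the corollary is stated as an immediate consequence of Theorem~\ref{thm:Main} via Proposition~\ref{prop:rectangled} (diameter $=$ bipartition), the identity $\coatom([e,g]_R)=d_R(g)$, and the anti-isomorphism of Proposition~\ref{prop:IntWeak} transporting statement~(2) into statement~(1). The bookkeeping you describe for the atoms under $x\mapsto w^{-1}x$ matches the identities the paper records just before Conjecture~\ref{conj:3}, so nothing is missing.
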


\begin{figure}[h!]
\resizebox{0.8\hsize}{!}{
\begin{tikzpicture}
	[scale=2,
	 q/.style={teal,line join=round},
	 racine/.style={blue},
	 racinesimple/.style={blue},
	 racinedih/.style={blue},
	 sommet/.style={inner sep=2pt,circle,draw=black,fill=blue,thick,anchor=base},
	 rotate=0]
 \tikzstyle{every node}=[font=\small]
\def\grosseursimple{0.025}
\node[anchor=south west,inner sep=0pt] at (0,0) {\includegraphics[width=6.4cm]{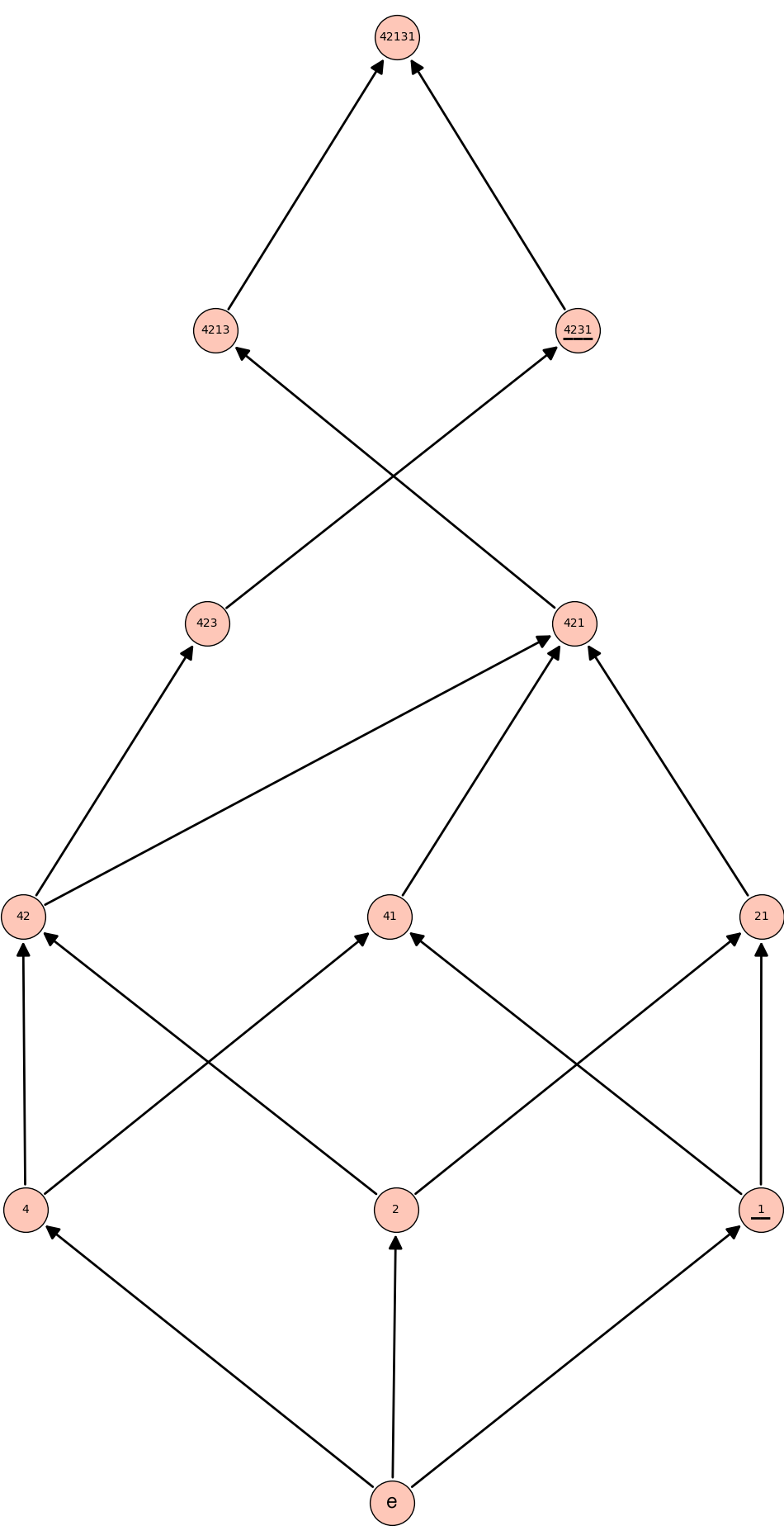}};
\coordinate (ancre) at (-0.6,5);

\node[sommet,label=above:$1$] (gamma) at ($(ancre)+(0.5,0.43)$) {};
\node[sommet,label=below :$3$] (beta) at ($(ancre)+(0.5,0)$) {} edge[thick] node[auto,swap] {} (gamma) ;
\node[sommet,label=below :$4$] (muph) at ($(ancre)+(1,0)$) {} edge[thick] node[auto,swap] {} (beta) ;
\node[sommet,label=below:$2$] (alpha) at (ancre) {}  {} edge[thick] node[auto,swap] {} (beta) ;


\end{tikzpicture}}
\caption{The rectangled interval $[e,42131]_R$ for $(W,S)$ of type $D_4$ with Coxeter graph on the upper left side. This rectangled interval has only two diameters: $\{e,42131\}$ and  $\{1,4231\}$.  The coatoms of $[e,42131]_R$ are $4213,4231$, the coatom of $[e,1]_R$ is $e$ and the coatoms of $[e,4231]_R$ is $423$, so Conjecture~\ref{conj:2}, and therefore Conjecture~\ref{conj:1}, hold in this example. The atoms of $[e,42131]_R$ are $1,2,4$ (the left descents), the atoms of $[1,42131]_R$ are $41,21$ and the atom of $[4231,42131]_R$ is $42131$, which illustrates the statement of Conjecture~\ref{conj:3}.}
\label{fig:D4}
\end{figure}

\subsection{Conjecture~\ref{conj:1} is true for finite Weyl groups}\label{ss:Weyl} As stated above, Conjecture~\ref{conj:1} is a generalization of a conjecture formulated by Ressayre in the case of finite Weyl groups, which the first author was made aware of in~2014. Let $W$ be a finite Weyl group with longest element $w_\circ$ and let $u_1,u_2,u_3\in W$, Ressayre conjectured the following statement:
$$
T(w_\circ)=T(u_1)\sqcup T(u_2)\sqcup T(u_3)\implies \rank(W)=|S|=d_R(u_1)+d_R(u_2)+d_R(u_3)\qquad (\triangle).
$$
In other words, if $\{u_1,u_2,u_3\}$ is a $3$-partition of $w_\circ$ then $|S|=d_R(u_1)+d_R(u_2)+d_R(u_3)$.  

In 2023, Francone and Ressayre~\cite[Theorem~1]{FrRe23}  showed that the structure coefficients of the Belkale-Kumar product are either $0$ or $1$ for any finite Weyl group, answering by the positive~\cite[Claim~1]{DiRo09}. Their theorem implies  $(\triangle)$ as a corollary~\cite[Corollary~5]{FrRe23}. Their proof relies on an algebraic geometry argument, based on the fact that complete flag varieties are simply connected, to reduce the problem to a case-by-case computation on crystallographic root systems spanning over twenty pages. It turns out that Conjecture~\ref{conj:1} is equivalent to statement $(\triangle)$, see Proposition~\ref{prop:Ressayre}. So Conjecture~\ref{conj:1} is true for finite Weyl groups as a consequence of~\cite[Theorem~1]{FrRe23}. 

\smallskip
Unfortunately, this equivalent statement $(\Delta)$ is not valid for infinite Coxeter systems (since the longest element $w_\circ$ is involved), whereas Conjecture~\ref{conj:1} seems to be. So direct or undirected proofs of $(\Delta)$ won’t likely be generalizable to arbitrary Coxeter system. But still a direct proof of statement $(\Delta)$ for finite Coxeter groups could be interesting to better understand  Conjecture~\ref{conj:1}.

In type $A$, a proof\footnote{The authors of \cite{DeDiRo17} states that their result can be extended to type $B$.}  of $(\Delta)$, involving solely the structure of the corresponding crystallographic root system, is given thanks to \cite[Proposition 8.1]{DeDiRo17}. The idea is as follows. For $w\in W$, denote by $\Phi^R(w)=\{-w(\alpha_s)\mid s\in D_R(w)\}$, the so called set of {\em descent-roots}. In \cite[Proposition 8.1]{DeDiRo17}, the authors show that if $\{u_1,u_2,u_3\}$ are a $3$-partition of $w_\circ$, then $\Phi^R(u_1)\sqcup\Phi^R(u_2)\sqcup\Phi^R(u_3)$ is  a basis of the vector space spanned by $\Phi$, which is of dimension $|S|$. Hence 
$$
|S|=|\Phi^R(u_1)|+|\Phi^R(u_2)|+|\Phi^R(u_3)|=d_R(u_1)+d_R(u_2)+d_R(u_3).
$$

Unfortunately, we do not see how to adapt this argument in the case of Conjecture~\ref{conj:1}. Indeed, in Example~\ref{ex:2} (type $A_3$),  the element $w=\tau_1\tau_3\tau_2\tau_1\tau_3$ has  $u=\tau_1\tau_2$ and $v= \tau_3\tau_2\tau_1$ as a bipartition. Consider the crystallographic root system of $A_3$ with positive roots $e_j-e_i$ with $1\leq i<j\leq 4$. 
The unique  inversion-descent of $u$ is $e_3-e_1$, the unique one for $v$ is $e_4-e_1$ - and those form a base of a subspace $V_1$. However, the inversion descents of $w$ are $e_3-e_1$ and $e_4 - e_2$, which form a basis of a subspace $V_2$, distincts of  $V_1$.  If $V_1$ would have been equal to $V_2$, we could have concluded as in  \cite[Proposition 8.1]{DeDiRo17}. This phenomenon is not an exception and happens in most cases of bipartitions.
	It would be interesting to know if a statement similar to \cite[Proposition 8.1]{DeDiRo17} holds for bipartitions of elements of $w$. The proof of \cite[Proposition 8.1]{DeDiRo17} is not elementary and is a consequence of a long and technical analysis of some properties of crystallographic root systems of type~$A$.

\subsection*{Acknowledgements} Part of this research was done at the Laboratoire d'alg\`ebre, de combinatoire et d'informatique math\'ematique (LACIM) at UQAM during the sabbatical of the second author. Therefore, she warmly thanks the IRL CRM-CNRS of Montr\'eal and  the INSMI for giving her this opportunity. The first author warmly thanks Riccardo Biagioli, Matthew Dyer, Philippe Nadeau, Piotr Przytycki, Nicolas Ressayre and Paolo Sentinelli for instructive discussions.

\section{Partitions of elements in a Coxeter system}\label{se:1}

For a general reference on Coxeter groups, we refer the reader to~\cite{BjBr05}. For $u,v,w\in W$, we say that a product $w=uv$ is {\em a reduced product} if $\ell(w)=\ell(u)+\ell(v)$. In this case, we say that $u$ is {\em a prefix of $w$} and $v$ is a {\em suffix of $w$}.  

In  the weak order $(W,\leq_R)$, it is well-known that, for $u,w\in W$, we have $u\leq_R w$ if and only if $T(u)\subseteq T(w)$. Bj\" orner showed that $(W,\leq_R)$ is a complete meet-semilattice.  For $X\subseteq W$, we denote the {\em meet of $X$} by $\bigwedge_R X$   and, if it exists, the {\em join of $X$} by $\bigvee_R X$.

\subsection{First properties of partitions of elements of $W$} Companion to the right descent set is the {\em left descent set of $w\in W$}:
$$
D_L(w)=\{s\in S\mid \ell(sw)<\ell(w)\}=\{s\in S\mid s\leq_R w\}\subseteq T(w).
$$
We denote by $d_L(w)=|D_L(w)|$. The following proposition states, in particular, that the statement in  Conjecture~\ref{conj:1} obtained by replacing right descent sets  by left descents easily holds. 
 
\begin{prop}\label{prop:basic} Let $\mathcal P$ be a partition of $w\in W$, then:
\begin{enumerate}
\item $D_L(w)=\bigsqcup_{u\in \mathcal P} D_L(u)$. In particular $d_L(w)=\sum_{u\in\mathcal P} d_L(u)$;
\item $\ell(w)=\sum_{u\in \mathcal P} \ell(u)$;
\item $u\leq_R w$ for all $u\in\mathcal P$;
\item $\bigwedge_R \mathcal P = e$ and $\bigvee_R\mathcal P=w$.
\end{enumerate}
\end{prop}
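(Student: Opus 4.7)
The plan is to derive each item directly from the defining equation $T(w) = \bigsqcup_{u \in \mathcal P} T(u)$ together with a standard reformulation of the invariant in question in terms of the left inversion set.

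For items (1)--(3), I would use one of the three foundational dictionaries between an element of $W$ and its inversion set. Concretely, for (1) I would intersect both sides of the partition identity with $S$, using the elementary identity $D_L(g) = S \cap T(g)$ for every $g \in W$; this yields $D_L(w) = \bigsqcup_{u \in \mathcal P} D_L(u)$, and the cardinality formula follows. For (2), I would take cardinalities on both sides of the partition identity, using $|T(g)| = \ell(g)$ valid in any Coxeter system. For (3), I would invoke the classical characterization $u \leq_R w \iff T(u) \subseteq T(w)$; since each $T(u)$ is contained in the union $T(w)$, we have $u \leq_R w$. Each of these three steps is a one-line consequence of the disjoint union hypothesis.

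Item (4) is the one requiring slightly more thought, and is where I would focus attention. For the join, I would first note, using (3), that $\mathcal P$ is bounded above by $w$ in Björner's complete meet-semilattice $(W, \leq_R)$, so $\bigvee_R \mathcal P$ exists and satisfies $\bigvee_R \mathcal P \leq_R w$; the reverse inequality follows because $T(\bigvee_R \mathcal P) \supseteq T(u)$ for every $u \in \mathcal P$ forces $T(\bigvee_R \mathcal P) \supseteq \bigcup T(u) = T(w)$. For the meet in the nontrivial case $|\mathcal P| \geq 2$, I would use only the inclusion $T(\bigwedge_R \mathcal P) \subseteq \bigcap_{u \in \mathcal P} T(u)$, coming from $\bigwedge_R \mathcal P \leq_R u$ for every $u \in \mathcal P$, and then conclude that the intersection is empty from the disjointness of the union, hence $\bigwedge_R \mathcal P = e$.

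The only real subtlety --- the one place a direct proof could go wrong --- is the temptation to invoke the equality $T(\bigwedge_R X) = \bigcap_{u \in X} T(u)$, which in general fails (even for finite $W$, since intersections of biclosed sets need not be biclosed: for instance in $\mathcal S_3$ one has $T(s_1 s_2) \cap T(s_2 s_1) = \{s_1 s_2 s_1\}$, which is not an inversion set). One must work with the weaker inclusion, which happens to be all that is needed here precisely because the disjointness hypothesis makes the intersection empty.
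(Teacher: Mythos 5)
Your proof is correct and follows essentially the same route as the paper: items (1)--(3) via the standard inversion-set dictionary, the join via the two containments $T(\bigvee_R\mathcal P)\subseteq T(w)$ and $T(\bigvee_R\mathcal P)\supseteq\bigsqcup T(u)$, and the meet via $T(\bigwedge_R\mathcal P)\subseteq\bigcap_{u\in\mathcal P}T(u)=\emptyset$ (the paper phrases this last step contrapositively, producing a common simple left descent $s\in\bigcap D_L(u)$, but it is the same idea). Your cautionary remark about not asserting $T(\bigwedge_R X)=\bigcap T(u)$ is well taken and consistent with how the paper avoids that pitfall.
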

\begin{proof}  (1) The statement is a direct consequences of the definition of partitions of $w$.  (2) The statement is a consequence of the definition of partitions of $w$ and of the fact that $|T(g)|=\ell(g)$ for any $g\in W$. (3) The statement is a consequence of the definition of partitions of $w$ and of the fact that $u\leq_R w$ if and only if $T(u)\subseteq T(w)$. (4) Assume by contradiction that $\wedge_R \mathcal P \not = e$. Then there is $s \in S$ such that $s\leq_R u$ for all $u\in\mathcal P$. So 
$$
s\in \bigcap_{u\in\mathcal P}D_L(u)\subseteq \bigcap_{u\in\mathcal P} T(u),
$$
contradicting that the union $T(u)$ for $u\in\mathcal P$ is disjoint. So $\bigwedge_R \mathcal P = e$. Finally, by (3), the set $\mathcal P$ is bounded by $w$ so $z:=\bigvee_R \mathcal P$ exists and $z\leq_R w$. So in particular, $T(z)\subseteq T(w)$. For all $u\in\mathcal P$, we have $u\leq_R z$ so $T(u)\subseteq T(z)$. Therefore: 
$$
T(w)= \bigsqcup_{u\in\mathcal P} T(u)\subseteq T(z)\subseteq T(w),
$$
implying $w=z$.
\end{proof}

As a direct consequence of Proposition~\ref{prop:basic}~(1) we obtain the following useful statement on partition-irreducible elements.

\begin{cor}\label{cor:Bi} Let $w\in W$ such that $d_L(w)=1$, then $w$ is partition-irreducible. In particular, if $w\in W$ admit a proper bipartition, then $d_L(w)>1$.
\end{cor}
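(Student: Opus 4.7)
The plan is to prove the contrapositive of the first statement: assuming $w$ admits a proper partition $\mathcal P$ with $|\mathcal P|>1$, I will show $d_L(w)\geq 2$. The key input is Proposition~\ref{prop:basic}~(1), which gives us the disjoint decomposition $D_L(w)=\bigsqcup_{u\in\mathcal P}D_L(u)$ and hence the numerical identity $d_L(w)=\sum_{u\in\mathcal P}d_L(u)$.

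First, I will observe that since $\mathcal P$ is proper we have $e\notin \mathcal P$, so every $u\in\mathcal P$ is a nonidentity element. In a Coxeter system, any nonidentity element $u$ admits at least one reduced expression, whose first letter lies in $D_L(u)$, so $d_L(u)\geq 1$. Plugging this into the identity from Proposition~\ref{prop:basic}~(1) yields
\[
d_L(w)=\sum_{u\in\mathcal P}d_L(u)\geq |\mathcal P|\geq 2,
\]
which is the desired contradiction to $d_L(w)=1$. Hence $w$ is partition-irreducible.

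The ``in particular'' statement is then immediate: a proper bipartition is by definition a proper partition of cardinality $2>1$, so the first part applied contrapositively forces $d_L(w)>1$. I do not expect any real obstacle in this argument, since it is a pure bookkeeping consequence of the additivity of left descent counts over partitions established in Proposition~\ref{prop:basic}; the only thing to be careful about is invoking properness to rule out the trivial summand $d_L(e)=0$.
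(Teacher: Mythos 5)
Your proof is correct and matches the paper's intent exactly: the paper presents this corollary as a direct consequence of Proposition~\ref{prop:basic}~(1), and your argument simply makes explicit the two-line bookkeeping (each nonidentity element of a proper partition contributes at least one left descent, so $d_L(w)\geq|\mathcal P|\geq 2$). No issues.
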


Recall that $(W,S)$ is a {\em universal Coxeter system} if its Coxeter graph is a complete graph whose edges are all indexed by $\infty$. The following proposition states that universal Coxeter systems satisfy Conjecture~\ref{conj:1}, which is obviously true if there are only partition-irreducible elements in $W$.

\begin{prop}\label{prop:Conj1True} All elements in universal Coxeter systems are partition-irreducible. 
\end{prop}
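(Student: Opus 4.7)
The plan is to exploit the defining feature of a universal Coxeter system: the only relations are $s^2=e$ for $s\in S$, since all off-diagonal entries of the Coxeter matrix are $\infty$. From this I want to conclude that every element has a unique reduced expression, and therefore at most one left descent, so that Corollary~\ref{cor:Bi} finishes the argument.

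First I would recall (or quickly verify via Matsumoto's theorem / the word problem for Coxeter groups) that any two reduced expressions of $w\in W$ are connected by a sequence of braid moves $\underbrace{sts\cdots}_{m(s,t)}\to\underbrace{tst\cdots}_{m(s,t)}$. In a universal Coxeter system, every $m(s,t)=\infty$ for $s\neq t$, so there are no braid moves available. Hence reduced expressions in $W$ are unique. In particular, for $w\neq e$, the set of first letters of reduced expressions of $w$ is a singleton, so $d_L(w)=1$.

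Then I would invoke Corollary~\ref{cor:Bi}: since $d_L(w)=1$ for every $w\neq e$, each such $w$ is partition-irreducible. The element $e$ is partition-irreducible by convention (Definition~\ref{def:1}). This covers all elements of $W$ and completes the proof.

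The only step that requires a small amount of care is the uniqueness of reduced expressions; if one prefers to avoid invoking Matsumoto, one can argue directly that if $w=s u$ and $w=t v$ are two reduced expressions with $s,t\in S$ distinct left descents, then $\{s,t\}\subseteq D_L(w)$ forces the join $s\vee_R t$ to exist in $[e,w]_R$, which in a universal Coxeter system would require the dihedral subgroup $\langle s,t\rangle$ to contain an element of length $2$ that is $\leq_R w$; since $\langle s,t\rangle$ is infinite dihedral and $st, ts$ are distinct elements of length $2$, neither can be a prefix of the (unique) reduced expression of $w$, a contradiction. Either route is short; I anticipate no real obstacle beyond stating the uniqueness of reduced expressions cleanly.
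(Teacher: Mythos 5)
Your proof is correct, and it reaches the same endpoint as the paper ($d_L(w)\leq 1$ for every $w$, then Corollary~\ref{cor:Bi}) by a different key fact. The paper invokes the standard result that $W_{D_L(w)}$ is always a \emph{finite} standard parabolic subgroup, and observes that in a universal Coxeter system the only finite standard parabolics are the $W_{\{s\}}$, forcing $|D_L(w)|\leq 1$. You instead deduce $|D_L(w)|\leq 1$ from uniqueness of reduced expressions, which follows from Matsumoto--Tits since no braid move is available when every $m(s,t)=\infty$ (equivalently, a universal Coxeter group is a free product of copies of $\mathbb{Z}/2\mathbb{Z}$, so normal forms are unique). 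Both routes are short and both rest on a standard fact; yours is arguably more self-contained for the universal case specifically, while the paper's argument is phrased so that the only input special to universal systems is the classification of their finite standard parabolics. One caveat: your sketched ``alternative route avoiding Matsumoto'' at the end is the weakest part --- the claim that the existence of the join $s\vee_R t$ forces a length-two element of $\langle s,t\rangle$ to be a prefix of $w$ is not justified as stated, and the clean way to run that argument is precisely the paper's: if $s,t\in D_L(w)$ then $\alpha_s,\alpha_t\in\Phi(w)$, and biclosedness of $\Phi(w)$ would force the infinitely many positive roots of the infinite dihedral subsystem into the finite set $\Phi(w)$. I would either drop that paragraph or replace it with this root-system argument; the main line of your proof stands as is.
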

\begin{proof} Suppose $(W,S)$ to be a universal Coxeter system. Let $w\in W$. It is well-known that if $D_L(w)=I$ then $W_I$ is finite. Since the only finite standard parabolic subgroups of a universal Coxeter system are the $W_{\{s\}}$ for $s\in S$, we conclude with Corollary~\ref{cor:Bi}. 
\end{proof}

\subsection{Roots and reflections} To discuss further properties of bipartition of elements of $W$ and Conjecture~\ref{conj:1}, we need the interpretation of inversion sets in a root system.

From now on  we consider a {\em geometric representation of $(W,S)$} over a quadratic space $(V,B)$. More precisely,  $V$ is a real vector space, $B$ is a symmetric bilinear form that is preserved by the action of $W$, and $S$ is mapped into a set of {\em $B$-reflections} associated to a {\em simple system} $\Delta=\{\alpha_s\mid s\in S\}$;  see for instance \cite[\S2.1]{HoLa16} for more details. Such a representation is a generalization of the {\em Tits canonical representation} as presented in \cite[\S4.1]{BjBr05}. 

For $X\subseteq V$, we denote by $\cone(X)$ the set of nonnegative linear combinations of vectors in $X$. We consider the associated root system $\Phi=W(\Delta)$, the positive root system $\Phi^+=\Phi\cap \cone(\Delta)$ and the negative root system $\Phi^-=-\Phi^+$. It is well-known that $\Phi=\Phi^+\sqcup \Phi^-$ and that the set of reflections $T$ is in bijection with the set of {\em $B$-reflections} associated to $\Phi^+$. In particular:
$$
T=\{s_\alpha\mid \alpha\in\Phi^+\}.
$$
The inversion set (as a subset of positive roots) of an element $w\in W$ is then:
$$
\Phi(w)=\{\alpha \mid s_\alpha \in T(w)\}=\Phi^+\cap w(\Phi^-).
$$
In particular $\ell(w)=|\Phi(w)|=|T(w)|$.

Subsets of $\Phi^+$ that are inversion sets are characterized by the notion of {\em biclosed sets}: a subset $A$ of $\Phi^+$ is {\em biclosed} if the two following conditions are met: (a) for any $\alpha,\beta \in A$, $\cone(\alpha,\beta)\subseteq A$; (b) Let $\alpha,\beta,\gamma\in \Phi^+$ such that $\gamma\in A\cap \cone(\alpha,\beta)$, then $\alpha\in A$ or $\beta\in A$.

We summarize in the next proposition some useful and well-known results about that inversion sets as a subset of positive roots; see for instance \cite[\S2.2-2.3]{HoLa16} for more details.

\begin{prop}\label{prop:Biclos}
\begin{enumerate}
\item A subset $A$ of $\Phi^+$ is biclosed if and only if $\Phi^+\setminus A$ is biclosed. 
\item Let $w=uv$ a reduced product, that is, $\ell(w)=\ell(u)+\ell(v)$, then $\Phi(w)=\Phi(u)\sqcup u(\Phi(v))$.
\item For $u,v\in W$, we have $\Phi(u)\subseteq \Phi(v)$ if and only if $u\leq_R v$. 
\item A finite subset $A$ of $\Phi^+$ is biclosed if and only if there is $w\in W$ such that $A=\Phi(w)$.
\end{enumerate}
\end{prop}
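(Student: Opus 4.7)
The plan is to address the four items in order; items (1) and (2) are short root-theoretic checks, item (3) reduces to (2) by an easy induction, and item (4) is the deepest statement and is where the main obstacle lies.

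For (1), the biclosure axioms (a) and (b) exchange roles under complementation within $\Phi^+$. If $A$ satisfies (a), then axiom (b) for $\Phi^+\setminus A$ follows: any $\gamma\in \cone(\alpha,\beta)\cap\Phi^+$ lying outside $A$ must have $\alpha$ or $\beta$ outside $A$, for otherwise (a) applied to $A$ would place $\gamma$ in $A$. The converse implication is the contrapositive of the same observation, exchanging (a) and (b). For (2), the cleanest route uses the identity $\Phi(g) = \Phi^+\cap g(\Phi^-)$. Disjointness of $\Phi(u)$ and $u(\Phi(v))$ is immediate since the former lies in $u(\Phi^-)$ while the latter lies in $u(\Phi^+)$. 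For the union, let $\alpha \in \Phi(w)$; then $v^{-1}u^{-1}(\alpha)\in\Phi^-$, and a case split on the sign of $u^{-1}(\alpha)$ shows $\alpha \in \Phi(u)\cup u(\Phi(v))$. The cardinality identity $\ell(w)=\ell(u)+\ell(v)$ confirms equality as finite sets.

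For (3), the forward direction is (2) applied to $v=u\cdot(u^{-1}v)$, which is reduced by the definition of $\leq_R$. The reverse direction proceeds by induction on $\ell(u)$: if $\ell(u)=0$ there is nothing to prove; otherwise pick $s\in D_L(u)$, so $\alpha_s\in\Phi(u)\subseteq\Phi(v)$ forces $s\in D_L(v)$. Writing $u=s\cdot su$ and $v=s\cdot sv$ as reduced products and applying (2) together with the fact that $s$ permutes $\Phi^+\setminus\{\alpha_s\}$, the containment $\Phi(u)\subseteq\Phi(v)$ translates into $\Phi(su)\subseteq\Phi(sv)$, and induction closes the argument.

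For (4), the direction ``$\Phi(w)$ is biclosed'' is routine: biclosure is preserved under the operation $A\mapsto\{\alpha_s\}\sqcup s(A)$ that mirrors prefixing by $s$, and one proceeds by induction on $\ell(w)$, the base case $\Phi(s)=\{\alpha_s\}$ being trivially biclosed. The converse, that every finite biclosed $A\subseteq\Phi^+$ equals some $\Phi(w)$, is the main obstacle. The key lemma needed is that every nonempty finite biclosed set contains a simple root; granting this, if $\alpha_s\in A$ then one checks that $s(A\setminus\{\alpha_s\})$ is again finite and biclosed, induction on $|A|$ gives $s(A\setminus\{\alpha_s\})=\Phi(w')$, and (2) yields $A=\Phi(sw')$. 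The simple-root lemma itself I would prove by choosing $\alpha\in A$ of minimal depth (the minimal $k$ for which $\alpha$ is sent into $\Phi^-$ by a product of $k$ simple reflections), and using the biclosure axioms to contradict minimality if the chosen root were not simple — this is the one place where the finiteness of $A$ is essential and where care is required.
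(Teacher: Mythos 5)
The paper does not actually prove Proposition~\ref{prop:Biclos}: it is stated as a summary of well-known facts with a pointer to \cite[\S 2.2--2.3]{HoLa16}, so there is no in-paper argument to compare against. Your reconstruction is correct and follows the standard route found in that reference and in Dyer's work: complementation swaps the two biclosure axioms for (1); the sign case-split on $u^{-1}(\alpha)$ plus the length count for (2); induction on $\ell(u)$ peeling off a common left descent for (3) (you should spell out the last step, that $su\leq_R sv$ together with $\ell(u)=\ell(su)+1$ and $\ell(v)=\ell(sv)+1$ gives $\ell(v)=\ell(u)+\ell(u^{-1}v)$); and for (4) the induction on $|A|$ via the operation $A\mapsto s(A\setminus\{\alpha_s\})$, driven by the lemma that a nonempty biclosed set contains a simple root. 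Two small points. First, your closing remark slightly misplaces where finiteness is essential: the minimal-depth argument for the simple-root lemma works for any nonempty biclosed set (depths are positive integers, so a minimum exists); finiteness of $A$ is what makes the induction on $|A|$ terminate, and it is genuinely needed since infinite Coxeter groups admit infinite biclosed sets that are not inversion sets. Second, in (4) you should also record the one-line check that $\alpha_s\notin s(A\setminus\{\alpha_s\})$ (else $-\alpha_s\in A$), which is what guarantees $\ell(sw')=\ell(w')+1$ and lets you invoke (2) to conclude $A=\Phi(sw')$.
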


\subsection{Bipartitions of the longest element} Recall that $W$ is finite if and only if there is an element of maximal length, which is unique and denoted by $w_\circ$. It is well-known that the longest element has the following properties: $\Phi^+=\Phi(w_\circ)$; $w_\circ Sw_\circ = S$, $D_R(w_\circ)=D_L(w_\circ)=S$. Moreover,  for all $w\in W$, we have:
$$
\Phi(ww_\circ)=\Phi^+\setminus \Phi(w)\ \textrm{ and } \ell(ww_\circ)=\ell(w_\circ) - \ell(w).
$$ 

The following proposition states that Conjecture~\ref{conj:1} holds if $w=w_\circ$, the longest element of a finite Coxeter group $W$.

\begin{prop}\label{prop:Longest} Assume $W$ to be finite with $w_\circ$ as longest element.
\begin{enumerate} 
\item The set of bipartitions of $w_\circ$ is the set $\{\{u,uw_\circ\}\mid u\in W\}$.
\item Let  $\{u,v\}$ be a bipartition of  $w_\circ$, then 
$
|S|=d_R(w_\circ)=d_R(u)+d_R(v).
$
\end{enumerate}
\end{prop}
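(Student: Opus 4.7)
The plan is to exploit two standard facts about the longest element recalled in the paper: the length-complement formula $\Phi(ww_\circ)=\Phi^+\setminus \Phi(w)$ valid for every $w\in W$, and the fact that conjugation by $w_\circ$ stabilizes $S$, i.e.\ $w_\circ Sw_\circ=S$.

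For part (1), I would argue by double inclusion. Given any $u\in W$, the formula $\Phi(uw_\circ)=\Phi^+\setminus \Phi(u)$ (applying the identity $\Phi(ww_\circ)=\Phi^+\setminus \Phi(w)$ with $w=u$) gives
\[
\Phi(u)\sqcup \Phi(uw_\circ)=\Phi(u)\sqcup(\Phi^+\setminus\Phi(u))=\Phi^+=\Phi(w_\circ),
\]
so $\{u,uw_\circ\}$ is a bipartition of $w_\circ$, using Proposition~\ref{prop:Biclos} to translate between inversion sets as reflections and as positive roots. Conversely, if $\{u,v\}$ is any bipartition of $w_\circ$, then disjointness forces $\Phi(v)=\Phi^+\setminus\Phi(u)=\Phi(uw_\circ)$, and since an element of $W$ is determined by its inversion set (Proposition~\ref{prop:Biclos}~(3)), we conclude $v=uw_\circ$.

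For part (2), by part (1) it suffices to show $d_R(u)+d_R(uw_\circ)=|S|$ for every $u\in W$. Pick $s\in S$ and set $s':=w_\circ sw_\circ\in S$. Then $uw_\circ s=us'\cdot w_\circ$, and because $\ell(gw_\circ)=\ell(w_\circ)-\ell(g)$ for every $g\in W$, I get $\ell(uw_\circ s)=\ell(w_\circ)-\ell(us')$ and $\ell(uw_\circ)=\ell(w_\circ)-\ell(u)$. Consequently
\[
s\in D_R(uw_\circ)\iff \ell(us')>\ell(u)\iff s'\notin D_R(u).
\]
Since $s\mapsto s'=w_\circ sw_\circ$ is a bijection of $S$ (using $w_\circ Sw_\circ=S$), this shows that $D_R(uw_\circ)$ is in bijection with $S\setminus D_R(u)$, hence $d_R(uw_\circ)=|S|-d_R(u)$. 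Adding $d_R(u)$ and using $d_R(w_\circ)=|D_R(w_\circ)|=|S|$ yields the desired equality.

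The only substantive step is the length computation in part (2); everything else is a direct application of the identities recalled in the paragraph preceding the proposition. I do not anticipate any genuine obstacle, provided one is careful to track where reduced products are used (so that $\ell(gw_\circ)=\ell(w_\circ)-\ell(g)$ is applied legitimately to both $uw_\circ$ and $us'w_\circ$).
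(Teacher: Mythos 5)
Your proposal is correct and follows essentially the same route as the paper: part (1) via the identity $\Phi(uw_\circ)=\Phi^+\setminus\Phi(u)$ together with the fact that an element is determined by its inversion set, and part (2) via the computation $\ell(uw_\circ s)=\ell(u(w_\circ s w_\circ)w_\circ)=\ell(w_\circ)-\ell(u(w_\circ s w_\circ))$ and the bijection $s\mapsto w_\circ s w_\circ$ of $S$. The only (harmless) difference is that you spell out both inclusions in part (1), whereas the paper only writes out the direction showing every bipartition has the stated form.
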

\begin{proof} We have $\Phi^+=\Phi(w_\circ)=\Phi(u)\sqcup \Phi(v)$ by definition of bipartitions. Write $u'=uw_\circ $, then by assumption: 
$$
\Phi(u')=\Phi^+\cap u'(\Phi^-)= \Phi^+\cap uw_\circ(\Phi^-)= \Phi^+\cap u(\Phi^+)=\Phi^+\setminus \Phi(u)=\Phi(v).
$$
So $v=u'=uw_\circ$, by Proposition~\ref{prop:Biclos}~(3). 

Now, let $s\in D_R(v)$. So $r=w_\circ s w_\circ \in S$ and we have:
$$
\ell(v)-1= \ell(vs)=\ell(uw_\circ s)=\ell(ur w_\circ)= \ell(w_\circ)-\ell(ur).
$$
Hence:
$$
\ell(ur)=\ell(w_\circ) - \ell(v)+1=\ell(w_\circ) - \ell(uw_\circ)+1=\ell(w_\circ) - (\ell(w_\circ)-\ell(u))+1=\ell(u)+1.
$$
So $r\notin D_R(u)$. The same line of reasoning shows that $s\in D_R(v)$ if and only if $w_\circ s w_\circ \in S\setminus D_R(u)$. Therefore, $d_R(v)=|S|-d_R(u)$, which concludes the proof since $D_R(w_\circ)=S$.
\end{proof}

The following result is key in the proof of Theorem~\ref{thm:Main} in the case of type $B$.

\begin{prop}\label{prop:ReductionLongest} Assume that $W$ is finite and let $u,v,w\in W$, then the following statements are equivalent:
\begin{enumerate}[(i)]
\item $\{u,v\}$ is a bipartition of $w$;
\item  $\{u,ww_\circ\}$ is a bipartition of $vw_\circ$;
\item $\{ww_\circ,v\}$ is a bipartition of $uw_\circ$.
\end{enumerate}
Moreover, if Conjecture~\ref{conj:1} holds in one of these cases, then it holds for all these cases. 
\end{prop}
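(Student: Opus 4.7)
The plan is to translate everything into inversion sets as subsets of $\Phi^+$ using Proposition~\ref{prop:Biclos} and the identity $\Phi(gw_\circ) = \Phi^+ \setminus \Phi(g)$, valid for every $g \in W$. The three bipartition conditions then become the set equalities
\begin{align*}
\text{(i)} \ & \Phi(w) = \Phi(u) \sqcup \Phi(v), \\
\text{(ii)} \ & \Phi^+ \setminus \Phi(v) = \Phi(u) \sqcup \bigl(\Phi^+ \setminus \Phi(w)\bigr), \\
\text{(iii)} \ & \Phi^+ \setminus \Phi(u) = \bigl(\Phi^+ \setminus \Phi(w)\bigr) \sqcup \Phi(v).
\end{align*}

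For (i) $\Rightarrow$ (ii): starting from $\Phi(w)=\Phi(u)\sqcup \Phi(v)$, the complement $\Phi^+\setminus \Phi(v)$ is the disjoint union of what lies in $\Phi(w)\setminus \Phi(v)=\Phi(u)$ and what lies outside $\Phi(w)$, that is, $\Phi^+\setminus \Phi(w)$, and these two pieces are disjoint because $\Phi(u)\subseteq \Phi(w)$. Conversely, for (ii) $\Rightarrow$ (i), the disjointness encoded in (ii) directly forces $\Phi(u)\cap \Phi(v) = \emptyset$ (since $\Phi(u)\subseteq \Phi^+\setminus \Phi(v)$) and $\Phi(u)\subseteq \Phi(w)$, $\Phi(v)\subseteq \Phi(w)$ (taking complements). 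A cardinality count gives $|\Phi(u)|+|\Phi(v)|=|\Phi(w)|$, so $\Phi(w)=\Phi(u)\sqcup\Phi(v)$. The equivalence (i) $\Leftrightarrow$ (iii) follows by exactly the same computation with the roles of $u$ and $v$ swapped.

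For the moreover statement, the essential ingredient is that for every $g\in W$ one has $d_R(g) + d_R(gw_\circ) = |S|$; this is Proposition~\ref{prop:Longest}(2) applied to the bipartition $\{g,gw_\circ\}$ of $w_\circ$ provided by Proposition~\ref{prop:Longest}(1). Substituting $d_R(ww_\circ) = |S| - d_R(w)$ and $d_R(vw_\circ) = |S| - d_R(v)$ into Conjecture~\ref{conj:1} applied to the bipartition in (ii), one obtains $|S| - d_R(v) = d_R(u) + |S| - d_R(w)$, which simplifies to $d_R(w) = d_R(u) + d_R(v)$; this is exactly Conjecture~\ref{conj:1} for case (i). The same substitution handles the pair (i)/(iii).

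No step looks genuinely hard: the first part is elementary set theory once the complement identity for $\Phi(gw_\circ)$ is in hand, and the second part is a one-line algebraic manipulation. The only point to be careful about is verifying that the unions claimed in (ii) and (iii) are truly disjoint, which in each direction amounts to recording the containments $\Phi(u),\Phi(v)\subseteq \Phi(w)$ that the bipartition condition provides.
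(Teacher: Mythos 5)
Your proposal is correct and follows essentially the same route as the paper: both reduce everything to the identity $\Phi(gw_\circ)=\Phi^+\setminus\Phi(g)$ and the same set-theoretic splitting of $\Phi^+\setminus\Phi(v)$, and both derive the ``moreover'' part from $d_R(gw_\circ)=|S|-d_R(g)$ via Proposition~\ref{prop:Longest}. The only cosmetic difference is that you prove the converse implication by an explicit cardinality count, whereas the paper observes that it suffices to prove (i)$\Rightarrow$(ii) and lets the symmetry of the construction supply the remaining implications.
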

\begin{proof} It is enough to show that $(i)$ implies $(ii)$. Suppose that $\{u,v\}$ is a bipartition of $w$. Then $\Phi(w)=\Phi(u)\sqcup \Phi(v)$. Therefore:
\begin{eqnarray*}
\Phi(vw_\circ)=\Phi^+\setminus\Phi(v)&=&(\Phi(w)\setminus \Phi(v)) \sqcup  ((\Phi^+\setminus\Phi(w))\setminus \Phi(v))\\
&=& \Phi(u) \sqcup  (\Phi^+\setminus\Phi(w))\\
&=& \Phi(u) \sqcup  \Phi(ww_\circ).
\end{eqnarray*}
Hence $\{u,ww_\circ\}$ is a bipartition of $vw_\circ$, proving the equivalence of the statements. 

Now, by Proposition~\ref{prop:Longest} we know that $d_r(gw_\circ)= |S|-d_R(g)$ for all $g\in W$. Therefore $d_R(w)=d_R(u)+d_R(v)$ if and only if $d_R(vw_\circ)=d_R(u)+d_R(ww_\circ)$, concluding the proof.
\end{proof}

The next statement shows, in particular, that in the case of finite Weyl groups Conjecture~\ref{conj:1} is equivalent to Ressayre's statement $(\triangle)$ from the introduction.

\begin{prop}\label{prop:Ressayre} Assume that $W$ is finite and let $u_1,u_2,u_3\in W$, then the following statements are equivalent:
\begin{enumerate}[(i)]
\item $\{u_1,u_2,u_3\}$ is a $3$-partition of $w_\circ$;
\item  $\{u_1,u_2\}$ is a bipartition of $w=u_3w_\circ$;
\end{enumerate}
Moreover, in this case, we have:
$$
d_R(u_1)+d_R(u_2)=d_R(w)\iff\rank(W)=|S|=d_R(u_1)+d_R(u_2)+d_R(u_3). 
$$
\end{prop}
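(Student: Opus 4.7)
The plan is to reduce both statements to simple set-theoretic identities in $\Phi^+$, using the well-known property of the longest element recalled just before the proposition: for every $g\in W$, $\Phi(gw_\circ)=\Phi^+\setminus\Phi(g)$.

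First I would establish the equivalence $(i)\Leftrightarrow(ii)$. Setting $w=u_3w_\circ$, the preceding identity gives $\Phi(w)=\Phi^+\setminus\Phi(u_3)$. Thus $(ii)$ translates to
\[
\Phi^+\setminus\Phi(u_3)=\Phi(u_1)\sqcup\Phi(u_2),
\]
which, since $\Phi^+=\Phi(w_\circ)$, is equivalent to the disjoint decomposition
\[
\Phi^+=\Phi(u_1)\sqcup\Phi(u_2)\sqcup\Phi(u_3),
\]
i.e.\ to $(i)$. This is essentially the same kind of manipulation already used in the proof of Proposition~\ref{prop:ReductionLongest}, so there is no real obstacle.

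For the \emph{moreover} part, I would invoke Proposition~\ref{prop:Longest}, which (applied to the bipartition $\{u_3,u_3w_\circ\}$ of $w_\circ$, or directly from $D_R(w_\circ)=S$ combined with its proof) gives $d_R(gw_\circ)=|S|-d_R(g)$ for every $g\in W$. In particular $d_R(w)=d_R(u_3w_\circ)=|S|-d_R(u_3)$. Substituting this into the identity $d_R(u_1)+d_R(u_2)=d_R(w)$ yields
\[
d_R(u_1)+d_R(u_2)=|S|-d_R(u_3),
\]
which rearranges to $\rank(W)=|S|=d_R(u_1)+d_R(u_2)+d_R(u_3)$, and conversely. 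This is a direct algebraic manipulation, so the only step requiring any care is correctly quoting $d_R(gw_\circ)=|S|-d_R(g)$ from Proposition~\ref{prop:Longest}; everything else is immediate.
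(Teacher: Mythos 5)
Your argument is correct and follows essentially the same route as the paper: both reduce the equivalence to the set identity $\Phi(u_1)\sqcup\Phi(u_2)=\Phi^+\setminus\Phi(u_3)=\Phi(u_3w_\circ)$, and both derive the \emph{moreover} part from $d_R(gw_\circ)=|S|-d_R(g)$ as supplied by Proposition~\ref{prop:Longest}. No gaps.
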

\begin{proof} Write $w=u_3w_\circ$. If $\{u_1,u_2,u_3\}$ is a $3$-partition of $w_\circ$, then 
$$
\Phi(u_1)\sqcup\Phi(u_2)=\Phi(w_\circ)\setminus\Phi(u_3)=\Phi^+\setminus\Phi(u_3) = \Phi(w).
$$
The converse follows from the same computation. The last statement of the proposition follows from the fact that $d_R(w)=d_R(w_\circ)-d_R(u_3)$.
\end{proof}

\subsection{Proof of Proposition~\ref{prop:Conj}}  We need first to state the following lemma.

\begin{lem}\label{lem:PropConj} Let  $w\in W$ and $\{u_1,\dots,u_k\}$ be a $k-$partition of $w$. Then there is $v\in W$ such that $\Phi(v)=\Phi(u_2) \sqcup \cdots \sqcup \Phi(u_{k})$. In particular, $\{u_1,v\}$ is a bipartition of~$w$.
\end{lem}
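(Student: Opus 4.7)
Set $A := \Phi(u_2) \sqcup \cdots \sqcup \Phi(u_k)$. By Proposition~\ref{prop:Biclos}~(4), exhibiting $v$ amounts to showing that $A$ is a finite biclosed subset of $\Phi^+$. Finiteness is immediate since each $\Phi(u_j)$ is finite. The strategy is then to verify both biclosedness conditions for $A$ by exploiting two facts: each individual $\Phi(u_j)$ is biclosed (it is an inversion set), and by definition of partition we have the disjoint decomposition $\Phi(w) = \Phi(u_1) \sqcup A$, so $\Phi(w)$ itself is biclosed.

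For condition~(a), take $\alpha, \beta \in A$ and $\gamma \in \cone(\alpha,\beta) \cap \Phi^+$. Since $\alpha, \beta \in \Phi(w)$ and $\Phi(w)$ is biclosed, $\gamma \in \Phi(w) = \Phi(u_1) \sqcup A$. If $\gamma$ were to lie in $\Phi(u_1)$, then biclosedness of $\Phi(u_1)$ (via condition~(b) applied to $\Phi(u_1)$) would force $\alpha$ or $\beta$ into $\Phi(u_1)$, contradicting $\alpha,\beta \in A$ and the disjointness of the partition. Hence $\gamma \in A$.

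For condition~(b), take $\gamma \in A$ and $\alpha,\beta \in \Phi^+$ with $\gamma \in \cone(\alpha,\beta)$. By the disjointness of the partition, $\gamma \in \Phi(u_j)$ for some (unique) $j \geq 2$. Biclosedness of $\Phi(u_j)$ yields $\alpha \in \Phi(u_j)$ or $\beta \in \Phi(u_j)$, and either way the chosen root lies in $A$.

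Combining the two verifications, $A$ is finite and biclosed, so Proposition~\ref{prop:Biclos}~(4) provides $v \in W$ with $\Phi(v) = A$. The relation $\Phi(w) = \Phi(u_1) \sqcup \Phi(v)$ is then immediate, giving the claimed bipartition $\{u_1,v\}$ of $w$. The only place where care is needed is condition~(a): one must rule out that the combined root $\gamma$ escapes into $\Phi(u_1)$, and this is precisely where the biclosedness of the individual piece $\Phi(u_1)$ together with the disjointness of the partition is used.
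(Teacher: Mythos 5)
Your proof is correct and follows essentially the same route as the paper: verify that $A=\Phi(u_2)\sqcup\cdots\sqcup\Phi(u_k)$ is finite and biclosed, then invoke Proposition~\ref{prop:Biclos}~(4). The only cosmetic difference is in condition~(a), where you place $\gamma$ in $\Phi(w)$ directly via the closure of $\Phi(w)$ rather than arguing by contradiction through the biclosedness of $\Phi^+\setminus\Phi(w)$; these are contrapositive formulations of the same fact.
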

\begin{proof} Set $A=\Phi(u_2) \sqcup \cdots \sqcup \Phi(u_{k})$.  We first show that $A$ is biclosed. 

\smallskip
\noindent (a) Let $\alpha,\beta \in A$. Assume by contradiction that there is $\gamma\in \cone(\alpha,\beta)\cap \Phi$ such that $\gamma\notin A$.  By Proposition~\ref{prop:Biclos}, $\Phi(u_1)$ is a biclosed set. Therefore, if $\gamma\in \Phi(u_1)$, then  $\alpha\in\Phi(u_1)$ or $\beta\in \Phi(u_1)$,  by (b) of the definition of biclosed sets; this is a contradiction since $A\cap \Phi(u_1)=\emptyset$. So $\gamma\notin  \Phi(u_1)\sqcup A=\Phi(w)$. Again, $\Phi(w)$ is biclosed, so by Proposition~\ref{prop:Biclos}, $\Phi^+\setminus \Phi(w)$ is biclosed. Therefore either $\alpha$ or $\beta$ must be in $\Phi^+\setminus \Phi(w)$, which is again a contradiction. We conclude that $\cone(\alpha,\beta)\subseteq A$.

\smallskip
\noindent (b)  Let $\alpha,\beta,\gamma\in \Phi^+$ such that $\gamma\in A\cap \cone(\alpha,\beta)$. Then $\gamma\in \Phi(u_i)$ for some $2\leq i\leq k$. Since $\Phi(u_i)$ is biclosed, we have $\alpha\in\Phi(u_i)\subseteq A$ or $\beta\in \Phi(u_i)\subseteq A$.

\smallskip
\noindent Finally, since $A$ is a finite biclosed set, we have by Proposition~\ref{prop:Biclos}~(4) that there is $v\in W$ such that $\Phi(v)=A=\Phi(u_2) \sqcup \cdots \sqcup \Phi(u_{k})$ and $\Phi(w)=\Phi(u_1)\sqcup \Phi(v)$. 
\end{proof}

\begin{proof}[Proof of Proposition~\ref{prop:Conj}] Assume that Conjecture~\ref{conj:1} holds for $(W,S)$. We prove the following statement by induction on $k\in\mathbb N^*$: Let  $w\in W$ and $u_1,\dots,u_k$ be a $k-$partition of $w$, then $d_R(w)=d_R(u_1)+\dots+d_R(u_k)$.

By definition of $k-$partitions, $\Phi(w)=\Phi(u_1)\sqcup \Phi(u_2) \sqcup \cdots \sqcup \Phi(u_{k})$.
The case $k=1$ is readily seen. The case $k=2$ is  Conjecture~\ref{conj:1}, which holds by assumption.

Assume $k\geq 3$. By Lemma~\ref{lem:PropConj}, there is $v\in W$ such that $\Phi(v)=\Phi(u_2) \sqcup \cdots \sqcup \Phi(u_{k})$ and $\Phi(w)=\Phi(u_1)\sqcup \Phi(v)$. We conclude by induction  that  $d_R(w)=d_R(u_1)+d_R(v)=d_R(u_1)+\dots+d_R(u_k)$. 
\end{proof}

\subsection{Conjecture~\ref{conj:1}, bipartitions and rectangled intervals in the weak order}\label{ss:Rectangled}

In this section, we discuss the characterization of bipartitions of elements as diagonals of intervals in the right weak order $(W,\leq_R)$, which leads to the equivalent statements between  Conjecture~\ref{conj:1} and Conjecture~\ref{conj:2}. 

Recall from the introduction that the distance between two elements of $u$ and $v$ of $W$ is $d(u,v)=\ell(u^{-1}v)$. 

\begin{lem}[{\cite[Lemma~1.2]{BrHo93}}]\label{lem:BH} Let $u,v\in W$, then $d(u,v)=\ell(u)+\ell(v)$ if and only if $\Phi(u)\cap\Phi(v)=\emptyset$. 
\end{lem}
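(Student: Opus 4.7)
The plan is to establish the sharper identity
$$\ell(u^{-1}v) = \ell(u) + \ell(v) - 2|\Phi(u)\cap\Phi(v)|,$$
from which the lemma is immediate: the right-hand side equals $\ell(u)+\ell(v)$ if and only if $\Phi(u)\cap\Phi(v)=\emptyset$.

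I would prove this by induction on $\ell(v)$. The base case $v=e$ is trivial, since both sides reduce to $\ell(u)$. For the inductive step, choose $s\in D_R(v)$ and write $v=v's$ with $\ell(v')=\ell(v)-1$. Since this is a reduced product, Proposition~\ref{prop:Biclos}(2) applied to $v=v'\cdot s$ yields
$$\Phi(v) = \Phi(v')\sqcup\{v'(\alpha_s)\},\qquad v'(\alpha_s)\in\Phi^+.$$
Setting $\delta=1$ if $v'(\alpha_s)\in\Phi(u)$ and $\delta=0$ otherwise, it follows immediately that $|\Phi(u)\cap\Phi(v)| = |\Phi(u)\cap\Phi(v')| + \delta$.

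Next, from $u^{-1}v = (u^{-1}v')\cdot s$ we get $\ell(u^{-1}v) = \ell(u^{-1}v')\pm 1$, where the sign is $+1$ exactly when $s\notin D_R(u^{-1}v')$. Using the standard characterization $s\notin D_R(w)\iff w(\alpha_s)\in\Phi^+$, this condition becomes $u^{-1}(v'(\alpha_s))\in\Phi^+$, which (because $v'(\alpha_s)\in\Phi^+$) is equivalent to $v'(\alpha_s)\notin\Phi(u)$, i.e., $\delta=0$. Combining with the inductive hypothesis $\ell(u^{-1}v')=\ell(u)+\ell(v')-2|\Phi(u)\cap\Phi(v')|$ then gives the desired identity for $v$ in both cases $\delta=0$ and $\delta=1$.

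The only delicate step—and the main potential pitfall—is verifying that the sign of the length change in $\ell(u^{-1}v)$ under right multiplication by $s$ and the parity contribution $\delta$ to $|\Phi(u)\cap\Phi(v)|$ are synchronized, so that the $\pm 1$ length correction and the $2\delta$ intersection correction combine into the clean $-2\delta$ summand predicted by the identity. Everything else is routine bookkeeping, and the lemma then follows by specializing to the case $\Phi(u)\cap\Phi(v)=\emptyset$.
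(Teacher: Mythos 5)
Your proof is correct, and the inductive verification of the identity $\ell(u^{-1}v)=\ell(u)+\ell(v)-2|\Phi(u)\cap\Phi(v)|$ goes through: the only nontrivial synchronization you flag is exactly the equivalence $u^{-1}\bigl(v'(\alpha_s)\bigr)\in\Phi^+\iff v'(\alpha_s)\notin\Phi(u)$, which holds because $\Phi(u)=\Phi^+\cap u(\Phi^-)$ and $v'(\alpha_s)\in\Phi^+$. The paper, however, does not prove this lemma at all: it cites it directly from Brink--Howlett. The closest the paper comes to your argument is Lemma~\ref{lem:MaxInt}, where the stronger statement $d(u,v)=|\Phi(u)+\Phi(v)|$ (symmetric difference) is derived in one line from Dyer's cocycle identity $T(uv)=T(u)+uT(v)u^{-1}$; expanding the symmetric difference recovers exactly your identity, and the paper uses precisely this expansion as $(\star)$ in the proof of Proposition~\ref{prop:rectangled}. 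So your route is a self-contained, elementary induction that peels off one simple reflection at a time and tracks one root per step, whereas the paper's route packages the same bookkeeping into the cocycle condition and gets the formula wholesale; yours costs a page of careful sign-checking but needs no external input, while the cocycle approach is shorter once that machinery is available.
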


The following result is due to Dyer~\cite{Dy90} and is a consequence of the fact that the map $w\mapsto T(w)$ satisfies the {\em cocycle condition}. This condition allows  to have a generalization of the formula in Proposition~\ref{prop:Biclos}~(2) for not necessarily reduced products. Let $A,B\subseteq T$ and denotes $A+B:=A\setminus B\sqcup B\setminus A$ the symmetric difference of $A$ and $B$. Dyer showed that the map $w\mapsto T(w)$ is characterized by the following two conditions:
\begin{enumerate}[(i)]
\item $T(s)=\{s\}$ for all $s\in S$;
\item  $T(uv)=T(u)+uT(v)u^{-1}$, for all $u,v\in W$.
\end{enumerate}
Moreover, note that  $T(w)=wT(w^{-1})w^{-1}$ for all $w\in W$. 

\begin{remark} This cocycle condition is naturally expressed with descent sets as sets of reflections (the $T(w)$'s) and not with descent sets as sets of roots (the $\Phi(w)$'s). The reason is that the conjugation action of $W$ on $T$ is equivalent to the natural action of $W$ on $\Phi$ as a subset of $V$; however this action on $\Phi$ does not restrict to an action on $\Phi^+$, since for instance $s(\alpha_s)=-\alpha_s$ for all $s\in S$. 
\end{remark}

\begin{lem}\label{lem:MaxInt} Let $u,v\in W$, then  $d(u,v)=|\Phi(u) + \Phi(v)|$. 
\end{lem}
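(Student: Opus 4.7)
\medskip

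The plan is to reduce the statement to the cocycle condition for the map $w\mapsto T(w)$, recalled just before the lemma, and then translate the equality from sets of reflections to sets of roots. The bijection $t_\alpha \leftrightarrow \alpha$ between $T$ and $\Phi^+$ identifies $T(w)$ with $\Phi(w)$ and is compatible with set operations, so in particular $|T(u)+T(v)|=|\Phi(u)+\Phi(v)|$. Hence it suffices to prove the purely reflection-theoretic identity $\ell(u^{-1}v)=|T(u)+T(v)|$.

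To establish this, I would apply the cocycle condition to the trivial factorisation $v = u\cdot (u^{-1}v)$. This yields
\[
T(v) = T(u) + u\, T(u^{-1}v)\, u^{-1}.
\]
Since the symmetric difference operation $+$ turns the power set $2^T$ into an $\mathbb{F}_2$-vector space (every element is its own inverse), adding $T(u)$ to both sides gives
\[
u\, T(u^{-1}v)\, u^{-1} = T(u) + T(v).
\]
Conjugation by $u$ is a bijection of $T$, so taking cardinalities yields $|T(u^{-1}v)|=|T(u)+T(v)|$. Combined with $\ell(u^{-1}v)=|T(u^{-1}v)|$ and the bijection with $\Phi^+$, this gives
\[
d(u,v)=\ell(u^{-1}v)=|T(u)+T(v)|=|\Phi(u)+\Phi(v)|,
\]
as required.

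The argument is essentially a one-line consequence of the cocycle condition, so there is no real obstacle; the only point that deserves care is the passage between $T(w)$ and $\Phi(w)$, and the fact that symmetric difference is preserved under the bijection $t_\alpha\leftrightarrow \alpha$ and under the conjugation action of $W$ on $T$. Note that Lemma~\ref{lem:BH} is recovered as the special case where the two terms in the symmetric difference are disjoint, namely $|T(u)+T(v)|=|T(u)|+|T(v)|=\ell(u)+\ell(v)$ exactly when $T(u)\cap T(v)=\emptyset$, equivalently $\Phi(u)\cap\Phi(v)=\emptyset$.
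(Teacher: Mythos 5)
Your proof is correct and follows essentially the same route as the paper's: both rest on the cocycle condition $T(uv)=T(u)+uT(v)u^{-1}$ together with the fact that conjugation permutes $T$ and that the bijection $T\leftrightarrow\Phi^+$ preserves symmetric difference. The only (cosmetic) difference is that you apply the cocycle identity to the factorisation $v=u\cdot(u^{-1}v)$ and then cancel $T(u)$ using that symmetric difference is an involution, whereas the paper applies it directly to $u^{-1}v=u^{-1}\cdot v$ and conjugates back.
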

\begin{proof} We have:
$$
d(u,v)=\ell(u^{-1}v)=|T(u^{-1}v)|=|T(u^{-1})+u^{-1}T(v)u|=|u^{-1}(uT(u^{-1})u^{-1}+T(v))u|.
$$
Therefore $d(u,v)=|uT(u^{-1})u^{-1}+T(v)|=|T(u)+T(v)|=|\Phi(u)+\Phi(v)|$, since the map $T(g)\mapsto \Phi(g)$ is a bijection preserving unions and intersections. 
\end{proof}

The following statement shows that the maximal distance in an interval $[e,w]_R$ in $(W,\leq_R)$ is achieved by a geodesic (a smallest path in the right Cayley graph) between $e$ and $w$.    

\begin{prop}\label{cor:MaxInt} Let $w\in W$  and $u,v\in [e,w]_R$, then $d(u,v)\leq \ell(w)$. 
\end{prop}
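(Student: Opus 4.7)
The plan is to translate the distance $d(u,v)$ into a statement about positive inversion sets via Lemma~\ref{lem:MaxInt}, and then use the weak-order characterization from Proposition~\ref{prop:Biclos}~(3) to bound it by $|\Phi(w)|=\ell(w)$.

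More concretely, I would proceed as follows. Since $u,v\in[e,w]_R$, Proposition~\ref{prop:Biclos}~(3) gives $\Phi(u)\subseteq\Phi(w)$ and $\Phi(v)\subseteq\Phi(w)$. Therefore
\[
\Phi(u)+\Phi(v)=\bigl(\Phi(u)\setminus\Phi(v)\bigr)\sqcup\bigl(\Phi(v)\setminus\Phi(u)\bigr)\subseteq \Phi(u)\cup\Phi(v)\subseteq \Phi(w).
\]
Taking cardinalities and applying Lemma~\ref{lem:MaxInt}, together with the identity $|\Phi(w)|=\ell(w)$, yields
\[
d(u,v)=|\Phi(u)+\Phi(v)|\leq |\Phi(w)|=\ell(w),
\]
which is the claim.

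There is no real obstacle here: the statement is an immediate consequence of the two preceding lemmas plus the standard fact $\ell(w)=|\Phi(w)|$. The only substantive content has already been done in Lemma~\ref{lem:MaxInt}, whose proof was the non-trivial step (it required Dyer's cocycle condition on $T$ to reduce $\ell(u^{-1}v)$ to the symmetric difference $|\Phi(u)+\Phi(v)|$). Once that identification is in hand, the bound $d(u,v)\leq \ell(w)$ reduces to the obvious set-theoretic inequality $|A+B|\leq|A\cup B|$ applied inside the ambient set $\Phi(w)$, which is why the statement is recorded as a corollary rather than as a theorem.
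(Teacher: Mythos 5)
Your proof is correct and follows exactly the paper's own argument: both use Proposition~\ref{prop:Biclos}~(3) to get $\Phi(u)\cup\Phi(v)\subseteq\Phi(w)$ and then bound $d(u,v)=|\Phi(u)+\Phi(v)|$ via Lemma~\ref{lem:MaxInt}. Nothing to add.
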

\begin{proof} Since $u\leq_R w$ and $v\leq_R w$, we have $\Phi(u)\cup\Phi(v)\subseteq \Phi(w)$, by Proposition~\ref{prop:Biclos}~(3). So 
$\ell(w)=|\Phi(w)|\geq |\Phi(u) + \Phi(v)|=d(u,v)$, by Lemma~\ref{lem:MaxInt}.
\end{proof}

\begin{remark} We believe that Proposition~\ref{cor:MaxInt} is well-known, but we could not find a reference so we included a proof. More generally, it is known that the interval $[e,w]_R$ is {\em convex}, i.e., any geodesic between two elements of $[e,w]_R$ is contained in  $[e,w]_R$. This follows from the interpretation of the inversion set $T(g)$, for $g\in W$,  as the set $\mathcal H(g)$ of hyperplanes separating $g$ from the identity $e$. One can show that, for  $u,v\in [e,w]_R$, the set of hyperplanes $\mathcal H(u,v)$ that separates $u$ from $v$ satisfies $d(u,v)=|\mathcal H(u,v)|$ and   $\mathcal H(u,v)\subseteq \mathcal H(u)\cup \mathcal H(v)\subseteq \mathcal H (w)$. Moreover, any $g\in W$ in a geodesic from $u$ to $v$ satisfies $\mathcal H(u,g) \sqcup \mathcal (g,v) =\mathcal H(u,v)\subseteq \mathcal H(w)$ and $\mathcal H(g)\setminus \mathcal H(u,v)= H(u)\cup \mathcal H(v)\setminus \mathcal H(u,v)$. Therefore $\mathcal H(g)\subseteq \mathcal H (w)$, implying $g\leq_R w$.
\end{remark}

\begin{proof}[Proof of Propostion~\ref{prop:rectangled}] Let $u,v,w\in W$. We need to show that $\{u,v\}$ is a bipartition of $w$ if and only if $\{u,v\}$ is a diameter of $[e,w]_R$. 

Assume first that $\{u,v\}$ is a bipartition of $w$. Therefore $\ell(w)=\ell(u)+\ell(v)$ by Proposition~\ref{prop:basic}~(2). Now, by Lemma~\ref{lem:BH}, $\Phi(u)\cap \Phi(v)=\emptyset$. Finally we obtain $d(u,v)=\ell(u)+\ell(v)=\ell(w)$ and, therefore, $\{u,v\}$ is a diameter of $[e,w]_R$. 

Assume now that $\{u,v\}$ is a diameter of $[e,w]_R$. On one hand, by definition and Lemma~\ref{lem:MaxInt}, we have:
$$
(\star)\qquad \ell(w)=d(u,v)=\ell(u^{-1}v)=|\Phi(u)+\Phi(v)|=|\Phi(u)|+|\Phi(v)|-2|\Phi(u)\cap\Phi(v)|.
$$  
On the other hand, $u,v\in [e,w]_R$ implies $\Phi(u)\cup\Phi(v)\subseteq \Phi(w)$, by Proposition~\ref{prop:Biclos}~(c). So
$$
\ell(w)\geq |\Phi(u)\cup \Phi(v)|=|\Phi(u)|+|\Phi(v)|-|\Phi(u)\cap \Phi(v)|.
$$
Together with $(\star)$, we obtain  $-|\Phi(u)\cap\Phi(v)|\geq 0$. Therefore $\Phi(u)\cap \Phi(v)=\emptyset$. So by $(\star)$ again, we have  $\ell(w) = |\Phi(w)|=|\Phi(u)\sqcup\Phi(v)|$. Since $\Phi(u)\cup\Phi(v)\subseteq \Phi(w)$, we conclude that $\Phi(w)=\Phi(u)\sqcup\Phi(v)$. In other words,  $\{u,v\}$ is a bipartition of $w$. 
 \end{proof}

We end this section with the following proposition that we need to prove the equivalence between Conjecture~\ref{conj:1},  Conjecture~\ref{conj:2} and Conjecture~\ref{conj:3}.

\begin{prop}\label{prop:IntWeak} Let $w\in W$. The map $\varphi_w: [e,w]_R\to [e,w^{-1}]_R$, defined by $\varphi_w(g)=w^{-1}g$, is a bijection; $\varphi_w{}^{-1} = \varphi_{w^{-1}}$. Moreover for any $u,v\in [e,w]_R$ we have: 
\begin{itemize}
\item $u\leq_R v$ if and only if $\varphi_w(u)\geq_R\varphi_w(v)$ ($\varphi_w$ is a poset anti-isomorphism);
\item  $d(\varphi_w(u),\varphi_w(v))=d(u,v).$ ($\varphi_w$ is an isometry).
\end{itemize}
In particular, $\{u,v\}$ is a diameter of $ [e,w]_R$ if and only if $\{(\varphi_w(u),\varphi_w(v)\}$ is a diameter of  $[e,w^{-1}]_R$.
\end{prop}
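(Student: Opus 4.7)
The plan is to work directly from the characterization that $u\leq_R w$ if and only if there exists a reduced product $w=uv$, and then derive everything from careful bookkeeping of such decompositions. First I would verify that $\varphi_w$ sends $[e,w]_R$ into $[e,w^{-1}]_R$: if $u\leq_R w$ and $w=uv$ is a reduced product, then $w^{-1}=v^{-1}u^{-1}$ is also reduced (since $\ell(g^{-1})=\ell(g)$), so $\varphi_w(u)=w^{-1}u=v^{-1}$, which is a prefix of $w^{-1}$; hence $\varphi_w(u)\leq_R w^{-1}$. The identities $\varphi_{w^{-1}}\circ\varphi_w(g)=w\cdot w^{-1}g=g$ and its mirror then prove that $\varphi_w$ is a bijection with inverse $\varphi_{w^{-1}}$.

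For the order-reversing property, I would take $u\leq_R v\leq_R w$ and write $w=u\cdot x\cdot y$ as a triple reduced product with $v=ux$ reduced; such a decomposition exists by the prefix characterization of $\leq_R$. A short calculation gives $\varphi_w(v)=y^{-1}$ and $\varphi_w(u)=y^{-1}x^{-1}$. Since $w^{-1}=y^{-1}x^{-1}u^{-1}$ is reduced, its prefix $y^{-1}x^{-1}$ is also reduced, so $\varphi_w(u)=\varphi_w(v)\cdot x^{-1}$ is a reduced product, yielding $\varphi_w(v)\leq_R\varphi_w(u)$. The reverse implication follows by applying the same argument to $\varphi_{w^{-1}}$.

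The isometry statement is a one-line calculation: $d(\varphi_w(u),\varphi_w(v))=\ell((w^{-1}u)^{-1}(w^{-1}v))=\ell(u^{-1}w\cdot w^{-1}v)=\ell(u^{-1}v)=d(u,v)$. The diameter statement then follows immediately: since $\ell(w)=\ell(w^{-1})$, the pair $\{u,v\}\subseteq[e,w]_R$ satisfies $d(u,v)=\ell(w)$ if and only if $\{\varphi_w(u),\varphi_w(v)\}\subseteq[e,w^{-1}]_R$ satisfies $d(\varphi_w(u),\varphi_w(v))=\ell(w^{-1})$. There is no real obstacle in this proof; the only care required is to keep track of which products remain reduced, which is handled uniformly by the fact that inverting a reduced decomposition produces another reduced decomposition of the same length, and that any prefix or suffix of a reduced word is itself reduced.
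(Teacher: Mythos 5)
Your proposal is correct and follows essentially the same route as the paper's proof: both use the prefix characterization of $\leq_R$, verify well-definedness by inverting a reduced product, prove the order-reversal by tracking a triple reduced decomposition $w=uzy$ (your $w=uxy$), and obtain the isometry by the same one-line length computation. If anything, your write-up is slightly more complete, since you explicitly supply the converse of the order-reversing equivalence (via $\varphi_{w^{-1}}$) and the final diameter deduction, both of which the paper leaves implicit.
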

 \begin{proof} The map $\varphi$ is well-defined. Indeed, let $g\leq_R w$, so $w=gh$ is a reduced product (i.e.  $\ell(w)=\ell(g)+\ell(h)$). So $\varphi_w(g)= w^{-1}g=h^{-1}$. Since $w^{-1}=h^{-1}g^{-1}$ and  $\ell(w^{-1})=\ell(g^{-1})+\ell(h^{-1})$, $h^{-1}$ is a prefix of $w^{-1}$. In other words, $\varphi_w(g)=h^{-1}\leq_R w^{-1}$. It is readily seen that $\varphi_w{}^{-1} = \varphi_{w^{-1}}$ 
 
Let $u,v\in [e,w]_R$, then $w=ux$ and $w=vy$ are reduced products. We have $\varphi_w(u)=x^{-1}$ and $\varphi_w(v)=y^{-1}$. 

Assume first that $u\leq_R v$, then $v=uz$ is a reduced product and $w=vy=uzy$ is also a reduced product. Therefore $x=zy$ and 
$$
\varphi_w(u)=x^{-1}=y^{-1}z^{-1}\geq_R y^{-1}=\varphi_w(v). 
$$
Moreover: $d(\varphi_w(u),\varphi_w(v))=\ell(xy^{-1})=\ell(u^{-1}w\cdot w^{-1}v)=\ell(u^{-1}v)=d(u,v). $
 \end{proof}
 
 \begin{proof}[Proof of Theorem~\ref{thm:Equivalent}] The equivalence between Conjecture~\ref{conj:1} and Conjecture~\ref{conj:2} follows from Proposition~\ref{prop:rectangled}. By  Proposition~\ref{prop:IntWeak}, the left multiplication by $w^{-1}$ is an order-reversing isomorphism from $[e,w]_R$ to $[e,w^{-1}]_R$, i.e.,  $u\leq_R v$ if and only if $w^{-1}v\leq_R w^{-1}u$, for all $u,v\in [e,w]_R$. In particular,  $\coatom([e,w]_R)=\atom([e,w^{-1}]_R$ and, for any $u\leq_R w$, $\coatom([e,u]_R)=\atom([w^{-1}u,w^{-1}]_R$. So Conjecture~\ref{conj:2} is equivalent to Conjecture~\ref{conj:3}.
 \end{proof}

\section{The case of symmetric groups (Type A)}\label{se:Sn}

In  the two next sections, we provide a direct proof of Conjecture~\ref{conj:1} for Coxeter systems of type $A$ (symmetric groups) and $B$ (hyperoctahedral groups). For $n\in\mathbb N^*$, we denote by $\mathcal S_n$ the {\em symmetric group} of permutations of $\{1,\dots , n\}$  and by $W_n$ the {\em hyperoctahedral group} of signed permutations of  $\Bsetnn:=\{\pm 1,\dots,\pm n\}$.

\begin{thm}\label{thm:Main} Let $n\in\mathbb N^*$ and assume  that $W=\mathcal S_n$ or $W=W_n$. Then for any~$w\in W$ and~$\{u,v\}$ a bipartition of $w$, we have:
 $$
 d_R(w)=d_R(u)+d_R(v).
 $$
\end{thm}

The proof of the type $A$  part of Theorem~\ref{thm:Main} is given in this section  and the part regarding type $B$ is given in the next section~\S\ref{se:Bn}. 

\begin{remark}\label{rem:GeneralizeAB} Our proof does not require the use of the longest element, we do show directly Conjecture~\ref{conj:1} in type $A$ and $B$ by means of shuffle and standardization that seems to suggest that some phenomenon linked to decompositions along minimal coset representatives for spherical standard parabolic cosets might be hidden behind the proof we provide, By providing those proofs ($A$ and $B$) we hope that it might lead to an answer to Conjecture~\ref{conj:1} in general.  
\end{remark}

For $n\in\mathbb N^*$, we denote by $\Symn$ the symmetric group and $S=\{\tau_1,\dots,\tau_{n-1}\}$ the set of simple transpositions $\tau_i = (i\ i+1)$ (in the cycle notation). It is well-known that $(\Symn,S)$ is a Coxeter system of type $A_{n-1}$ in which the set of reflections is the set of all transpositions:
$$
T=\{(i\ j)\mid 1\leq i<j\leq n\},
$$
see for instance \cite[Chapters 1 and 2]{BjBr05} for more details.

A permutation  $\sigma \in \Symn$, as a bijection on the set $[n]=\{1,\dots,n\}$, is written as a word on the alphabet $[n]$: $\sigma=\sigma(1) \sigma(2) \dots \sigma(n)$ (the  {\em one-line notation}). The (left) inversion set of $\sigma$ has therefore the following well-known interpretation: 
$$
T(\sigma)=\{(\sigma(i)\ \sigma(j))\mid 1\leq i<j\leq n,\ \sigma(i)>\sigma(j) \}.
$$
Moreover the right and left descent sets are:
$$
D_L(\sigma)=\{\tau_i \mid \sigma^{-1}(i)>\sigma^{-1}(i+1)\}\textrm{ and }D_R(\sigma)=\{\tau_i \mid \sigma(i)>\sigma(i+1)\}.
$$
We say that $\sigma$ has a \emph{descent in position $i$} if $\tau_i\in D_R(\sigma)$.

\begin{ex}\label{ex:Sn1} Let $\sigma = 12684375\in\mathcal S_8$ in one-line notation. The (left) inversion set is:
$$
T(\sigma)=\{(3\ 6),\ (4\ 6),\ (5\ 6),\ (4\ 8),\ (3\ 8),\ (5\ 8),\  (7,\ 8),\ (3\ 4),\ (5\ 7)\},
$$ 
since in cycle notation $(i\ j)=(j\ i)$. 
The descent sets are $D_L(\sigma)=\{\tau_3,\tau_5,\tau_7\}$ and $D_R(\sigma)=\{\tau_4, \tau_5,\tau_7\}$. Therefore $\sigma$ has descents in positions $4,5$ and $7$.
\end{ex}

%

\subsection{Standardization of words, inversions and descents}\label{ss:TypeA-inversions}  A permutation $\sigma\in \Symn$ is, in particular, a word of length $n$ on the alphabet $\NN^*$ that contains exactly once each letter in $[n]$.  The notions of (left) inversions and right descents are naturally generalized to any word $w=w_1w_2\dots w_n$ on $\NN^*$ of length $n$ as follows:
\begin{itemize}
\item  $w$ has the {\em inversion} $(i,j)$ if $i < j$ and $w_i > w_j$;
\item $w$ has a  \emph{descent in position} $i$ if $w_i > w_{i+1}$. 
\end{itemize}

In this section we only consider {\em injective words}, i.e., words without repetition of letters. For instance $37142$ is injective while $11372142$ is not. Let $w$ be an injective word on $\NN^*$ and $a_1, a_2,\dots,a_k \in\NN^*$ $k$ letters in $w$, which are pairwise distincts since $w$ is injective. The notation
$$
w=\dots a_1\dots a_2\dots a_k\dots
$$
means that $w$ is the concatenation $u_1 a_1u_2 a_2\dots u_k a_k$ for some $u_1,u_2,\dots,u_k$ (possibly empty) words on $\NN^*$.  The set of inversions of the injective word $w$ is denoted by:
\begin{equation}\label{eq:Invs}
\invs(w) := \lbrace (w_j, w_i) \mid i < j \text{ and } w_i > w_j \rbrace= \lbrace (w_j, w_i) \mid w=\dots w_i \dots w_j \dots \rbrace.
\end{equation}

\begin{ex}\label{ex:Words1}  The word $w = 391427$ has $2$ descents in positions $2$ and $4$ and $\invs(w)=\{(1,3),\ (1,9), \ (4,9),\  (2,3),\ (2,9),\ (2,4),\ (7,9)\}$.
\end{ex}

\begin{remark} In the definition of inversions of the word $w$, we use the notation of a couple $(a,b)$ for an inversion instead of the cycle notation for which $(a\ b)=(b\ a)$.  In particular, if $w\in\Symn$, then $T(w)=\{(a\ b)\mid (a,b)\in\invs(w)\}$.

The reason is that, in the proof of Theorem~\ref{thm:Main}, we need to keep track of the following fact:  $(a,b)\in \invs(w)$ means that the letter $a$ is strictly smaller that the letter $b$ and the letter $a$ appears after the letter $b$ in $w$, i.e.,  $w=\dots b\dots a\dots$.
\end{remark}

For any word $w$ on $\NN^*$ of length $n$, the \emph{standardization} of $w$, denoted by $\std(w)$, is the unique permutation $\sigma\in \Symn$ such that for all $i < j$, $\sigma(i) > \sigma(j)$ if and only if $w_i > w_j$.  This corresponds to relabeling the letters of $w$ from $1$ to $n$ starting with the occurrences of the smallest letter from left to right, then the second smallest, etc. 

\begin{ex} Let $w =391427$ as in Example~\ref{ex:Words1}, we obtain $\std(w) = 361425$. The words $w$ and $\std(w)$ have the same number of inversions, at the same positions. In particular, the positions of the descents of $w$ are exactly the positions of the descents of $\std(w)$. They correspond to the inversions $(1,9)$ and $(2,4)$ in~$w$ and to the inversions $(1,6)$ and $(2,4)$  in $\std(w)$.
\end{ex}

The proof of Conjecture~\ref{conj:1} in type $A$ relies on the properties of  inversion sets.  We need, in order to prove Theorem~\ref{thm:Main}, to translate the notion of biclosed sets  and some of the results of Proposition~\ref{prop:Biclos} in terms of inversion sets of permutations. 

Let $I$ be a set of pairs $(a,b)$ with $1 \leq a < b \leq n$ for some $n \geq 1$. The set $I$ is {\em transitive} if for any $(a,b),(b,c) \in I$, we have $(a,c) \in I$.  Denote $I^c = \lbrace (a,b); 1 \leq a < b \leq n, (a,b) \not\in I \rbrace$ the complement of $I$. The well-known {\em transitivity property of inversion sets} is:  $I$ is the inversion set of a permutation $w$ of $\Symn$ if and only if both $I$ and $I^c$  are transitive sets.  There is an alternate useful way to verify the transitivity of $I^c$, as stated in the next proposition. 

\begin{prop}
\label{prop:invs}
Let $I$ be a set of tuples $(a,b)$ with $1 \leq a < b \leq n$ for some $n \geq 1$. Then $I$ is the inversion set of a certain permutation of $\Symn$ if and only if it satisfies the following conditions for all $1 \leq a < b < c \leq n$

\begin{enumerate}
\item $(a,b) \in I$ and $(b,c) \in I$ implies that $(a,c) \in I$;
\label{prop-item:invs-trans}
\item $(a,c) \in I$ implies that either $(a,b) \in I$, or $(b,c) \in I$.
\label{prop-item:invs-planar}
\end{enumerate}
\end{prop}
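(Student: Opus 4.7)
The plan is to reduce Proposition~\ref{prop:invs} to the classical characterization stated in the paragraph just before it: a set $I$ of pairs $(a,b)$ with $1 \le a < b \le n$ equals $\invs(\sigma)$ for some $\sigma \in \Symn$ if and only if both $I$ and its complement $I^c$ are transitive. Conditions (\ref{prop-item:invs-trans}) and (\ref{prop-item:invs-planar}) should then be recognizable as these two transitivities restricted to triples $a<b<c$: condition (\ref{prop-item:invs-trans}) is exactly the transitivity of $I$ on such triples, and condition (\ref{prop-item:invs-planar}) is the contrapositive of the transitivity of $I^c$.

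For the forward direction I would use the direct reading from the one-line notation recalled in~\eqref{eq:Invs}: $(x,y) \in \invs(\sigma)$ with $x < y$ means that $x$ appears strictly after $y$ in $\sigma$. Condition (\ref{prop-item:invs-trans}) then follows immediately (if $a$ is after $b$ and $b$ after $c$, then $a$ is after $c$), and for condition (\ref{prop-item:invs-planar}) one observes that if $a$ sits after $c$ in $\sigma$ with $a<b<c$, then whichever side of $c$ the letter $b$ lies on, at least one of $(a,b)$ and $(b,c)$ is an inversion.

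For the converse, given $I$ satisfying (\ref{prop-item:invs-trans}) and (\ref{prop-item:invs-planar}), I would define a binary relation $\prec$ on $[n]$ by declaring $a \prec b$ iff either ($a<b$ and $(a,b) \notin I$) or ($a>b$ and $(b,a) \in I$). Intuitively, $a \prec b$ says ``$a$ should precede $b$ in the word''. Asymmetry and totality are obvious: for $a \neq b$, setting $c=\min(a,b)$ and $d=\max(a,b)$, exactly one of $c \prec d$ or $d \prec c$ holds according to whether $(c,d) \notin I$ or $(c,d) \in I$. The crux is transitivity of $\prec$: assuming $a \prec b$ and $b \prec c$, I would split into the six possible orderings of the integers $a,b,c$. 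In each case, the desired conclusion $a \prec c$ follows from either condition (\ref{prop-item:invs-trans}) (when all three relevant pairs are forced into $I$, e.g.\ when $a<b<c$ or $c<b<a$) or the contrapositive of condition (\ref{prop-item:invs-planar}) (when one must rule out that a pair lies in $I$).

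Once $\prec$ is established as a strict total order, let $\sigma \in \Symn$ be the unique permutation with $\sigma(1) \prec \sigma(2) \prec \cdots \prec \sigma(n)$. By construction, for $a<b$, the pair $(a,b)$ lies in $\invs(\sigma)$ iff $a$ appears after $b$ in $\sigma$ iff $b \prec a$ iff $(a,b) \in I$; hence $\invs(\sigma) = I$, completing the proof. The only mildly delicate point is the six-case transitivity verification; it is routine but not fully symmetric, since conditions (\ref{prop-item:invs-trans}) and (\ref{prop-item:invs-planar}) treat $I$ and $I^c$ asymmetrically, so each case must be written out separately.
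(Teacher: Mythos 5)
Your proposal is correct; the paper in fact gives no proof of this proposition at all, presenting it as an immediate reformulation (via contraposition of condition~(2)) of the classical criterion that $I=\invs(\sigma)$ for some $\sigma\in\Symn$ iff both $I$ and $I^c$ are transitive, which is precisely the reduction you make. Your additional verification of that criterion via the total order $\prec$ (the six-case transitivity check goes through exactly as you describe) is a complete and standard argument that fills in what the paper leaves implicit.
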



\subsection{Bipartition, recursive decomposition of a permutation and proof of Theorem~\ref{thm:Main} in the case of symmetric groups}

Before getting into the core of the proof of Theorem~\ref{thm:Main}, we give an example of a bipartition of $\Symn$ within the language of the preceding section. We use this example to illustrate the steps of our proof.

\begin{ex}[An example of a bipartition in type $A$]\label{ex:TypeA} 
Let $w = 526341$. It has $10$ inversions : $(1,2), (1,3), (1,4), (1,5), (1,6), (2,5), (3,5), (3,6), (4,5), (4,6)$ and $3$ descents in positions $1$, $3$, and $5$. Then $u = 234561$ and $v = 152634$ form a bipartition of $w$. Indeed, the inversions of $u$ are $(1,2),  (1,3), (1,4), (1,5), (1,6)$ while the inversions of $v$ are $(2,5), (3,5), (3,6), (4,5), (4,6)$.

We check that Conjecture~\ref{conj:1} is indeed satisfied as there is $1$ descent in $u$ (in position $5$) and $2$ descents in $v$ (in positions $2$ and $4$). 
\end{ex}

We are now ready to introduce  the main notions that are needed to prove Theorem~\ref{thm:Main}. 

\begin{defi}
Let $w$ be a permutation of $\Symn$ with $n \geq 1$. The \emph{decreased word of}  $w$ is obtained by removing the letter $n$ from $w$ and is denoted by $\dec{w}$.
\end{defi}

For an example, if $w = 526341$, then $\dec{w} = 52341$.

\begin{lem}
\label{lem:dec}
Let $\lbrace u, v \rbrace$ be a bipartition of a permutation $w \in \Symn$ with $n \geq 1$, then $\lbrace \dec{u}, \dec{v} \rbrace$ is a bipartition of $\dec{w}$.
\end{lem}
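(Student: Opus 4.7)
The plan is to translate everything into the language of inversion sets of words given in Equation~\eqref{eq:Invs} and then observe that removing the letter $n$ amounts to discarding exactly those inversions that involve $n$. Since $w, u, v \in \Symn$ and $\{u,v\}$ is a bipartition of $w$, Definition~\ref{def:1} gives $T(w) = T(u) \sqcup T(v)$, which under the bijection between transpositions and pairs translates to
\[
\invs(w) = \invs(u) \sqcup \invs(v).
\]

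Next I would isolate the pairs involving $n$. Since $n$ is the maximum of $[n]$, any inversion of an injective word on $[n]$ that involves $n$ must be of the form $(a,n)$ with $1\le a < n$, namely one where $n$ appears before $a$ in the word. Let $I_n(\sigma) := \{(a,n) : (a,n) \in \invs(\sigma)\}$ for $\sigma \in \Symn$. Because removing the letter $n$ from the one-line notation of $\sigma$ preserves the relative order of the remaining letters $1,\dots,n-1$, we have the key identity
\[
\invs(\dec{\sigma}) = \invs(\sigma) \setminus I_n(\sigma),
\]
and $\dec{\sigma}$ is a permutation in $\mathcal{S}_{n-1}$.

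Applying this identity to $w$, $u$, and $v$, and intersecting the partition $\invs(w) = \invs(u)\sqcup\invs(v)$ with the set of pairs $(a,b)$ where $b\leq n-1$, yields
\[
\invs(\dec{w}) = \invs(\dec{u}) \sqcup \invs(\dec{v}),
\]
since intersecting both sides of a disjoint union with a fixed set preserves disjointness. Reinterpreting this equality via the bijection between $\invs$ and inversion sets $T$ in $\mathcal{S}_{n-1}$, we conclude $T(\dec{w}) = T(\dec{u}) \sqcup T(\dec{v})$, i.e., $\{\dec{u},\dec{v}\}$ is a bipartition of $\dec{w}$.

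There is no real obstacle here; the only thing to be careful about is that the statement is literally about bipartitions in $\mathcal{S}_{n-1}$, so we must check that $\dec{u},\dec{v},\dec{w}$ are well-defined permutations of $[n-1]$ (immediate, since $u,v,w$ are injective words containing each letter of $[n]$ exactly once) and that the $\invs$-description of inversion sets is compatible between $\Symn$ and $\mathcal{S}_{n-1}$ (which holds because $\invs$ is defined intrinsically from the one-line notation without reference to $n$).
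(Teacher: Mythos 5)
Your proof is correct and follows essentially the same route as the paper: both arguments rest on the identity $\invs(\dec{\sigma}) = \{(a,b)\in\invs(\sigma) \mid b\neq n\}$ (valid because every inversion involving $n$ has $n$ as its larger coordinate, and deleting $n$ preserves the relative order of the remaining letters) and then restrict the disjoint union $\invs(w)=\invs(u)\sqcup\invs(v)$ accordingly. Your write-up merely spells out the bookkeeping between $T(\cdot)$ and $\invs(\cdot)$ more explicitly than the paper does.
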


\begin{proof}
Note that for $w \in S_n$ with $n \geq 1$, then $\invs(\dec{w}) = \lbrace (a,b) \in \invs(w); b \neq n \rbrace$. In other words, we remove the inversions containing $n$. In particular, as $\invs(w) = \invs(u) \sqcup \invs(v)$ we have $\invs(\dec{w}) = \invs(\dec{u}) \sqcup \invs(\dec{v})$, which concludes the proof.
\end{proof}

Let $w$ be a word on $\NN$ and $I \subset \NN$. We write $\rest{w}{I}$ the subword of $w$ containing only the letters in $I$. If $w$ is a permutation, then $\rest{w}{I}$ is not in general a permutation, but $\srest{w}{I}$ is. For an example, if $w = 234561$ and $I = \lbrace 1, 3, 4, 6 \rbrace$, then $\rest{w}{I} = 3461$ and $\srest{w}{I} = 2341$.

\begin{defi}
\label{def:decomp}
Let $\lbrace u, v \rbrace$ be a bipartition of a permutation $w \in \Symn$. The \emph{right letters} of $w$ are defined as the set of letters in $w$ at the right-hand side of the letter $n$:

\begin{equation*}
\R (w) := \lbrace a\mid (a,n) \in \invs(w) \rbrace \cup \lbrace n \rbrace
\end{equation*}
The \emph{left letters} of $w$ are the complement in $[n]$ of the right letters, i.e., the set of letters in $w$ at the left-hand side of the letter $n$:

\begin{equation*}
\L(w) := \lbrace b\mid 1 \leq b < n, (b,n) \not\in \invs(w) \rbrace.
\end{equation*}
The {\em left} (resp. {\em right}) {\em words} of $w$, $u$, and $v$ are respectively:
\begin{align*}
w_\L &= \srest{w}{\L(w)}, & w_\R &= \srest{w}{\R(w)} \\
u_\L &= \srest{u}{\L(w)}, & u_\R &= \srest{u}{\R(w)} \\
v_\L &= \srest{v}{\L(w)}, & v_\R &= \srest{v}{\R(w)}. 
\end{align*}
\end{defi}

In other words, we \emph{cut} $w$ into two parts (left and right) just before the maximal letter. Then we select those letters in $u$ and $v$ and standardize to form new permutations. Note that $n$ is by definition a \emph{right} letter. The set $\L(w)$ of left letters might be empty, in this case the left words are the empty word and the right words are the original words $u,v,w$. If $n \geq 1$, the set $\R(w)$ of right letters is never the empty set as it contains at least the letter $n$.  Note also that the left (resp. right) words of $u$ and $v$ depend on the left (resp. right) letters of $w$.

\begin{ex}[Continuation of Example~\ref{ex:TypeA}] The left letters of $w = 526341$ are $\lbrace 2, 5 \rbrace$ and the right letters are $\lbrace 1, 3, 4, 6 \rbrace$. We then have

\begin{align*}
w_\L &= \std(52) = 21, & w_\R &= \std(6341) = 4231, \\
u_\L &= \std(25) = 12, & u_\R &= \std(3461) = 2341,  \\
v_\L &= \std(52) = 21, & v_\R &= \std(1634) = 1423. 
\end{align*}
\end{ex}

The proof of Theorem~\ref{thm:Main} in the case of symmetric groups relies on the two following propositions, which are proven in the following sections.

\begin{prop}
\label{prop:decomposition}
Let $\lbrace u, v \rbrace$ be a bipartition of a permutation $w \in \Symn$. Then the left words $\lbrace u_\L, v_\L \rbrace$ form a bipartition of $w_\L$ and the right words $\lbrace u_\R, v_\R \rbrace$ form a bipartition of $w_\R$ and

\begin{align}
\label{eq:deswdecomp}
\d(w) &= \d(w_\L) + \d(w_\R), \\
\label{eq:desudecomp}
\d(u) &= \d(u_\L) + \d(u_\R), \\
\label{eq:desvdecomp}
\d(v) &= \d(v_\L) + \d(v_\R).
\end{align}

\end{prop}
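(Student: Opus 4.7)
The plan is to split the statement into three pieces: the bipartition claims for $w_\L$ and $w_\R$, the descent identity $\d(w)=\d(w_\L)+\d(w_\R)$, and the descent identities for $u$ and $v$; the last is the only nontrivial part. For the bipartitions, standardization of a subword on a subset $I \subseteq \NN^*$ puts the inversions of $\srest{\sigma}{I}$ in natural bijection with the subset of inversions of $\sigma$ whose two entries lie in $I$; applied to $I=\L(w)$ and $I=\R(w)$, the hypothesis $\invs(w) = \invs(u) \sqcup \invs(v)$ restricts at once to $\invs(w_\L) = \invs(u_\L) \sqcup \invs(v_\L)$ and $\invs(w_\R) = \invs(u_\R) \sqcup \invs(v_\R)$. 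For $\d(w) = \d(w_\L) + \d(w_\R)$, by definition of $\L(w)$ and $\R(w)$ the word $w$ is literally the concatenation of its $\L(w)$-letters, then $n$, then its $\R(w) \setminus \{n\}$-letters, so its descents split over the two pieces since the transition into $n$ is an ascent, and standardization preserves descent positions.

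The heart of the argument is $\d(u) = \d(u_\L) + \d(u_\R)$ (the case of $v$ being identical). Two structural observations drive it. First, for every $a \in \L(w)$, the fact that $(a,n) \notin \invs(w)$ forces $a$ to appear before $n$ in both $u$ and $v$. Second, if $c \in \R(w) \setminus \{n\}$ appears before $d \in \L(w)$ in $u$, then $c < d$: otherwise $(d,c) \in \invs(u) \subseteq \invs(w)$, but in $w$ the letter $d$ precedes $n$ which precedes $c$, so $(d,c) \notin \invs(w)$, a contradiction. From these I claim that every adjacent cross-type transition in $u$ is an ascent. The $\R\to\L$ case follows immediately from the second observation (and $n$ cannot be immediately followed by an $\L(w)$-letter). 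For $\L\to\R$ with $b=n$ the claim is trivial; for adjacent $a \in \L(w)$ and $b \in \R(w) \setminus \{n\}$ with $a > b$, one tracks the position of $n$: in $u$, $n$ lies after $b$, so $(b,n) \in \invs(v)$, which forces $n$ to lie between $a$ and $b$ in $v$ (since $a$ precedes $n$ in $v$ by the first observation); but then $(b,a)$ is an inversion of $v$ as well as of $u$, contradicting disjointness. Hence $u$ has no cross-type descent.

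To conclude, take two $\L(w)$-letters $a,b$ consecutive in $u_\L$ but separated in $u$ by letters $c_1, \dots, c_k$; all $c_i$ lie in $\R(w) \setminus \{n\}$ since $a$ and $b$ precede $n$ in $u$. The adjacent $\L\to\R$ ascent gives $a < c_1$, and the second observation applied to $c_1$ and $b$ gives $c_1 < b$, whence $a < b$ and the pair produces no descent in $u_\L$. Symmetrically, for $\R(w)$-letters $a,b$ consecutive in $u_\R$ but separated in $u$ by $\L(w)$-letters $\ell_1, \dots, \ell_k$: $a \in \R(w) \setminus \{n\}$ (since $n$ is never followed by an $\L(w)$-letter), so the second observation gives $a < \ell_k$, and the adjacent $\L\to\R$ ascent (trivial if $b=n$) gives $\ell_k < b$, whence $a < b$. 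Descents of $u_\L$ therefore correspond bijectively to the $\L\to\L$ descents of $u$, and descents of $u_\R$ to the $\R\to\R$ descents; together with the absence of cross-type descents, this yields $\d(u) = \d(u_\L) + \d(u_\R)$. The main obstacle is the $\L\to\R$ ascent argument in the cross-type analysis, which is the only step requiring information about the position of $n$ in both $u$ and $v$ simultaneously.
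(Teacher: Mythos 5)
Your proof is correct and follows essentially the same route as the paper: the bipartition claims via restriction of inversion sets, the identity for $w$ via the concatenation structure, and the identities for $u,v$ via (i) no cross-type inversions, (ii) no cross-type descents, and (iii) no descents created by standardization. The only cosmetic difference is in step (ii): where the paper invokes the co-transitivity axiom for $\invs(v)$ to deduce $(c,n)\in\invs(v)$ and contradict $c\in\L(w)$, you track the position of $n$ in $v$ directly and contradict disjointness of $\invs(u)$ and $\invs(v)$ — an equivalent argument.
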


\begin{prop}
\label{prop:decreasing}
Let $\lbrace u, v \rbrace$ be a bipartition of a permutation $w \in \Symn$ such that $n \geq 2$ and $w_n \neq n$. Then the right words $u_\R$ and $v_\R$ satisfy the following equation:

\begin{equation}
\label{eq:decreasing}
\d(\dec{u_\R}) + \d(\dec{v_\R}) = \d(u_\R) + \d(v_\R) -1.
\end{equation}
\end{prop}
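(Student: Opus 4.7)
The plan is to track how the descent counts of $u_\R$ and $v_\R$ change when their maximum letter (descending from $n$) is removed, and reduce the identity to a combinatorial statement about the positions of the right letters of $w$ inside $u$ and $v$.

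First I would introduce $R_u := \{a\in\R(w)\setminus\{n\} : a\text{ appears after } n\text{ in } u\}$ and $R_v$ symmetrically. Since any left letter $a\in\L(w)$ satisfies $(a,n)\notin\invs(w)=\invs(u)\sqcup\invs(v)$, such an $a$ lies before $n$ in both $u$ and $v$; consequently $R_u\sqcup R_v=\R(w)\setminus\{n\}$, which is nonempty by the hypothesis $w_n\neq n$. Moreover, the non-standardized right subword of $u$ has the shape $(R_v\text{-letters in $u$-order})\, n\,(R_u\text{-letters in $u$-order})$, and symmetrically for $v$ with the roles of $R_u$ and $R_v$ swapped.

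Next I would split into cases according to whether $R_u$ or $R_v$ is empty. If $R_u=\emptyset$, then $n$ is the last letter of $u_\R$, so removing it changes no descents; and $R_v\neq\emptyset$ forces $n$ to begin $v_\R$ (which has length at least $2$), so removing it destroys exactly one descent. The case $R_v=\emptyset$ is symmetric, and in both cases the total change of $\d(u_\R)+\d(v_\R)$ is exactly $-1$. In the remaining case with $R_u,R_v$ both nonempty, let $A_u$ and $B_u$ denote the first $R_u$-letter and the last $R_v$-letter in $u$-order (the two neighbors of $n$ in $u_\R$), and define $A_v,B_v$ symmetrically. A direct computation shows that removing the strictly interior maximum from a permutation eliminates the descent to its right and installs a new descent between its two neighbors precisely when the left neighbor exceeds the right one; so $\d(u_\R)-\d(\dec{u_\R})$ equals $1$ if $B_u<A_u$ and $0$ if $B_u>A_u$, with the analogous statement for $v$. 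The identity then reduces to showing that \emph{exactly one} of $B_u>A_u$ and $B_v>A_v$ holds.

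The crux, which I expect to be the main obstacle, is the equivalence $B_u>A_u \Leftrightarrow A_v>A_u$ (and its symmetric counterpart). For the $\Leftarrow$ direction, I would first establish $A_v\le B_u$: both lie in $R_v$ and, by their definitions, $A_v$ is weakly before $B_u$ in both $u$-order and $v$-order, so the alternative $A_v>B_u$ would place the inversion $(B_u,A_v)$ simultaneously in $\invs(u)$ and $\invs(v)$, contradicting disjointness. Combining $A_v\le B_u$ with the hypothesis $A_v>A_u$ then gives $B_u\ge A_v>A_u$. For the $\Rightarrow$ direction, I would argue by contradiction: if $B_u>A_u$ and $A_v<A_u$, a direct inspection of the positions of $A_v,A_u,B_u$ relative to $n$ in $u$ and $v$ gives $(A_v,A_u)\in\invs(v)$ and $(A_u,B_u)\in\invs(u)$, hence both belong to $\invs(w)$. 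Applying the transitivity condition of Proposition~\ref{prop:invs} to $\invs(w)$ with $A_v<A_u<B_u$ yields $(A_v,B_u)\in\invs(w)$; but the same positional analysis that established $A_v\le B_u$ shows $(A_v,B_u)\notin\invs(u)\cup\invs(v)$, a contradiction. The symmetric equivalence $B_v>A_v\Leftrightarrow A_u>A_v$ follows by swapping the roles of $u$ and $v$. Since $A_u\in R_u$ and $A_v\in R_v$ lie in disjoint sets, exactly one of $A_u>A_v$ and $A_v>A_u$ is true, so exactly one of $B_u>A_u$ and $B_v>A_v$ holds, completing the reduction.
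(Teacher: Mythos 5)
Your argument is correct and takes essentially the same route as the paper: after handling the boundary cases where $n$ sits at an end of $u_\R$ or $v_\R$, you examine the two neighbors of $n$ in each word and use transitivity of $\invs(w)$ together with disjointness of $\invs(u)$ and $\invs(v)$ to show that exactly one of the two descents at $n$ survives deletion, which is precisely the paper's ``kept/lost'' dichotomy repackaged as a biconditional. No gaps.
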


\begin{proof}[Proof of Theorem~\ref{thm:Main} for $W=\mathcal S_n$] The proof is  by induction on $n\in\mathbb N^*$. The result is obviously true in $\Sym{1}$ that contains only the identity with 0 descents and inversions. Now, let $\lbrace u, v \rbrace$ be a bipartition of a permutation $w \in \Symn$ with $n \geq 2$. We suppose that the statement of  Theorem~\ref{thm:Main} is satisfied for all $1 \leq k < n$. Write $w=w_1\dots w_n$ in one-line notation. 

If $w_n = n$, then $\invs(w)$ does not contain any inversion $(*, n)$. It is also true for $\invs(u)$ and $\invs(v)$ since $\{u,v\}$ is a bipartition of $n$. Therefore $u_n = v_n =n$. In particular, there is no descent in position $n-1$ for neither $u$, $v$, nor $w$. So $w$, $u$ and $v$ are in $\mathcal S_{n-1}$ viewed as a subgroup of $\Symn$.  By Lemma~\ref{lem:dec}, $\lbrace \dec{u}, \dec{v} \rbrace$ is a bipartition of $\dec{w}$. So by induction $\d(w) = \d(\dec{w}) = \d(\dec{u}) + \d(\dec{v}) = \d(u) + \d(v)$.

Now suppose that $w_n \neq n$. This means that $w$ is an element of~$\Symn$ that is not contained in any $\Sym{k}$ with $k < n$ viewed as a subgroup of $\Symn$. We consider the decomposition given by Definition~\ref{def:decomp}. Let $k<n$ be the position of the letter $n$. We have two cases.

\paragraph{Case $k \neq 1$.} This implies that $w_\L$, $u_\L$ and $v_\L$ are not empty words. Therefore, by Proposition~\ref{prop:decomposition}, $\lbrace u_\L, v_\L \rbrace$ is a bipartition of $w_\L$ with $w_\L \in \Sym{k-1}$, and~$\lbrace u_\R, v_\R \rbrace$ is a bipartition of $w_\R$ with $w_\R \in \Sym{n-k+1}$. We have~$k-1 < n$ and  $n-k+1 <n$, since~$1<k < n$,. By induction, we obtain $\d(w_\L) = \d(u_\L) + \d(v_\L)$ and $\d(w_\R) = \d(u_\R) + \d(v_\R)$. Using Eq.~\eqref{eq:deswdecomp}, Eq.~\eqref{eq:desudecomp}, and~Eq.~\eqref{eq:desvdecomp} of Proposition~\ref{prop:decomposition}, we obtain that $\d(w) = \d(u) + \d(v)$.

\paragraph{Case $k=1$.} This implies that the left words are empty, \emph{i.e.}, $w = w_\R$, $u= u_\R$, and $v = v_\R$ and we cannot use directly the induction hypothesis. Nevertheless, we now have $w_1 = n$. In particular, as $n \geq 2$, this means that there is a descent in position $1$ in $w$ which disappears if we remove $n$. In other words, $\d(\dec{w}) = \d(w) - 1$.  Moreover, by Proposition~\ref{prop:decreasing}, we have $\d(\dec{u}) + \d(\dec{v}) = \d(u) + \d(v) -1$. By Lemma~\ref{lem:dec}, we know that $\lbrace \dec{u}, \dec{v} \rbrace$ is a bipartition of $\dec{w}\in\mathcal S_{n-1}$. Therefore, by induction,  $\d(\dec{w}) = \d(\dec{u}) + \d(\dec{v})$. We conclude that:
$$
\d(w)=\d(\dec{w}) +1=\d(\dec{u}) + \d(\dec{v}) +1= \d(u) + \d(v).
$$
\end{proof}

\begin{remark} The definition of the left, right and decreased words of $w$ by choosing $n$ as the letter is crucial in the above proof. A more `Coxeter-like' strategy to prove Theorem~\ref{thm:Main} by induction we tried (and failed) is the following:  If $D_R(w)=S$, then $w=w_\circ$ is the longest element and we can therefore conclude by Proposition~\ref{prop:Longest}. Now assume that $\tau_i\in S\setminus D_R(w)$ and define  the left, right and decreased words accordingly to $w_{i+1}$. Unfortunately, then, the analog of Proposition~\ref{prop:decomposition} fails to be true.

 For instance, consider $w=54231$ with the bipartition $\{ u=23514,v=41523\}$. Take $\tau_3\in S\setminus D_R(w)$, then $w_4=3$, $\L(w)=\{542\}$ and $\R(w)=\{1,3\}$. With this definition depending of $3$ instead of $n=5$, we obtain $u_\L = 132$, $u_\R=21$, $v_\L=312$ and $v_R=12$. Observe that $\d(u)=1<2=d_R(u_\L)+d_R(u_\R)$, which means that Proposition~\ref{prop:decomposition} fails to be true in this case. 
\end{remark}

\subsection{Proof of Proposition~\ref{prop:decomposition}}

We start with a simple, yet fundamental, lemma.

\begin{lem}
\label{lem:right-left-inversions}\label{rem:n-after}
Let $\lbrace u, v \rbrace$ be a bipartition of a permutation $w \in \Symn$. Then there is no inversion $(a,c)$ of $u$ or $v$ such that $a\in \L(w)$ is a left letter of $w$ and $c\in\R(w)$ is a right letter of~$w$.
In particular:
\begin{itemize}
\item If $a\in\R(w)$ and $c \in \L(w)$ such that 
$$
u=\dots a\dots c\dots\quad \textrm{or}\quad v=\dots a\dots c\dots,
$$ 
then $a<c$.
\item The letter $n$ is placed after all the left letters in both $u$ and $v$.
\end{itemize}
\end{lem}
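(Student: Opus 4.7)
The plan is to reduce everything to the single observation that, under a bipartition, $\invs(u)$ and $\invs(v)$ are both subsets of $\invs(w)$, so any claim forbidding certain pairs from being inversions of $u$ or $v$ can be established by forbidding them in $w$. The first move I would make is to reformulate $\L(w)$ and $\R(w)$ in positional terms: from the definition of $\invs(w)$ and $\R(w)=\{a\mid (a,n)\in\invs(w)\}\cup\{n\}$, one reads that $a\in\R(w)$ iff either $a=n$ or the letter $n$ appears before the letter $a$ in the one-line notation of $w$. Dually, $b\in\L(w)$ iff $b$ appears strictly before $n$ in $w$. This positional characterization is what the rest of the argument will use.

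Next I would prove the main statement by contraposition. Suppose $(a,c)$ is an inversion of $u$ (the case of $v$ is symmetric) with $a\in\L(w)$ and $c\in\R(w)$. Since $\{u,v\}$ is a bipartition of $w$, $\invs(u)\subseteq\invs(w)$, hence $(a,c)\in\invs(w)$. By the definition~\eqref{eq:Invs} of $\invs$, this forces $a<c$ and the letter $c$ appearing before the letter $a$ in $w$. But from the positional reformulation just established, $a\in\L(w)$ puts $a$ strictly before $n$ in $w$, while $c\in\R(w)$ puts $c$ at the position of $n$ or strictly after it. Combining these, $a$ appears before $c$ in $w$, so $(a,c)\notin\invs(w)$, a contradiction. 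This proves the main clause of the lemma.

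From here the two bullet points follow immediately. For the first, if $a\in\R(w)$ and $c\in\L(w)$ sit in the order $u=\dots a\dots c\dots$ (or the same in $v$) and we had $a>c$, then by~\eqref{eq:Invs} the pair $(c,a)$ would belong to $\invs(u)$ (resp.\ $\invs(v)$) with $c\in\L(w)$ and $a\in\R(w)$, contradicting what has just been shown; hence $a<c$. For the second, if some left letter $b\in\L(w)$ were to appear after $n$ in $u$ (or $v$), then $(b,n)$ would be an inversion of $u$ (resp.\ $v$) with $b\in\L(w)$ and $n\in\R(w)$, again contradicting the main clause.

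There is essentially no obstacle: the whole argument is bookkeeping, and the only subtle point is the initial positional reading of $\L(w)$ and $\R(w)$, which must correctly handle the boundary case $c=n$ in the definition of a right letter. Once that is in place, the rest is a one-line contradiction using the inclusion $\invs(u)\sqcup\invs(v)=\invs(w)$.
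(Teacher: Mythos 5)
Your proof is correct and follows essentially the same route as the paper: both arguments rest on the positional observation that every left letter of $w$ precedes every right letter (the latter sitting at or after the position of $n$), so $w$ has no inversion $(a,c)$ with $a\in\L(w)$ and $c\in\R(w)$, and the conclusion transfers to $u$ and $v$ via $\invs(u)\sqcup\invs(v)=\invs(w)$. Your write-up is simply a more explicit version of the paper's two-line argument, including the routine deduction of the two bullet points.
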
 

\begin{proof}
This is readily seen  on the word $w$ as all left letters are indeed at the left-hand side of all the right letters of $w$, which are at the right-hand side of the maximal letter $n$. So $w$ does not contain an inversion $(a,c)$ with $a \in \L(w)$ and $c \in \R(w)$. The lemma follows since inversions of $u$ and $v$ are contained  inversions of $w$, by definition of bipartitions.
\end{proof}

\begin{remark} A more ¨Coxeter-like' argument to prove this lemma is with the transitivity of inversions (bipodal sets). Indeed, as $c\in\R(w)$, then by Definition~\ref{def:decomp}, there is an inversion $(c,n)$ in $w$. If $(a,c)$ is also an inversion of $w$, we obtain that $(a,n)$ is an inversion by transitivity and so $a\in\R(w)$.   
\end{remark}

\begin{ex}[Continuation of Example~\ref{ex:TypeA}]  We have $u = 234561$ with $2$ and $5$ being the only left letters. We note that the two right letters $3$ and $4$ are placed before the letter $5$ and are indeed smaller.
\end{ex}

\begin{prop}
\label{prop:decents}
Let $\lbrace u, v \rbrace$ be a bipartition of a permutation $w \in \Symn$. If there is a descent in $u$ (or in $v$) between two letters $c$ and $a$, then either $a,c\in\R(w)$ or $a,c\in\L(w)$.
\end{prop}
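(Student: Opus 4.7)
My plan is to argue by contradiction using Lemma~\ref{lem:right-left-inversions}. A descent in $u$ between $c$ and $a$ means that $u=\dots ca\dots$ with $c>a$, which yields the inversion $(a,c)\in\invs(u)\subseteq\invs(w)$. I will suppose that one of $a,c$ lies in $\L(w)$ and the other in $\R(w)$ and derive a contradiction in each of the two resulting configurations.

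The first configuration, $a\in\L(w)$ and $c\in\R(w)$, is dispatched in one line: the inversion $(a,c)\in\invs(u)$ directly violates the principal conclusion of Lemma~\ref{lem:right-left-inversions}.

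The main obstacle is the other configuration, $c\in\L(w)$ and $a\in\R(w)$, which is not immediately ruled out by the lemma, so I plan to track the position of the letter $n$ in both $u$ and $v$. Since $c\in\L(w)$ forces $c<n$ while $c>a$, we must have $a\ne n$; combined with $a\in\R(w)$, this gives $(a,n)\in\invs(w)$. Next, the second bullet of Lemma~\ref{lem:right-left-inversions} places $n$ after $c$ in $u$; since $a$ is adjacent to $c$ in $u$, the letter $n$ must appear after $a$ in $u$, so $(a,n)\notin\invs(u)$ and hence $(a,n)\in\invs(v)$, placing $n$ before $a$ in $v$. Applying the same bullet to $v$ places $n$ after $c$ in $v$, so $c$ precedes $a$ in $v$ and therefore $(a,c)\in\invs(v)$. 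Together with $(a,c)\in\invs(u)$, this contradicts the disjointness of $\invs(u)\sqcup\invs(v)=\invs(w)$. The subtle point is that no single invocation of the lemma suffices in this configuration; the argument requires chaining three uses of it (locate $n$ in $u$, transfer $(a,n)$ to $v$, locate $n$ in $v$) in order to recover the forbidden inversion $(a,c)$ in both words and so reach the desired contradiction.
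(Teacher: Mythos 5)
Your proof is correct and follows essentially the same route as the paper: both arguments reduce to the configuration $c\in\L(w)$, $a\in\R(w)$, use the fact that $n$ follows all left letters in $u$ to get $(a,n)\notin\invs(u)$, and then transfer $(a,n)$ to $\invs(v)$. The only divergence is the final step --- the paper invokes Proposition~\ref{prop:invs}~(2) on $\invs(v)$ to force $(c,n)\in\invs(v)$ and contradict $c\in\L(w)$, whereas you apply the second bullet of Lemma~\ref{lem:right-left-inversions} to $v$ once more to force $(a,c)\in\invs(v)$ and contradict disjointness --- an equally valid and marginally more elementary finish.
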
 

\begin{ex}[Continuation of Example~\ref{ex:TypeA}]   We have $u = 234561$ and $v = 152634$. We mark the descents in $u$ with a dot for clarity:  $u = 23456.1$ has a single descent between $6$ and $1$, both right letters. For  $v = 15.26.34$ we have one descent between $5$ and $2$ (both left letters) and one descent between $6$ and $3$ (both right letters).
\end{ex}

\begin{proof} Suppose that we have a descent in $u$ between two letters $c$ and $a$ (with $a < c$). So $u=\dots ca\dots$.  Note that in particular, $u$ contains the inversion $(a,c)$. So by Lemma~\ref{lem:right-left-inversions}, if $c\in\R(w)$, then  $a\in\R(w)$.

Suppose by contradiction that $c\in\L(w)$  and $a\in\R(w)$. Since $c\in\L(w)$, we have $u=\dots ca \dots n\dots$ by Lemma~\ref{rem:n-after}. In particular, $(a,n) \not\in \invs(u)$. Since $a\in\R(w)$, we know that $(a,n)\in\invs(w)$. Moreover, $\invs(w) = \invs(u) \sqcup \invs(v)$ by definition of bipartition, so $(a,n) \in \invs(v)$ and $(a,c)\notin\invs(v)$.  By Proposition~\ref{prop:invs}~\eqref{prop-item:invs-planar} and since $a < c < n$, we must have  $(c,n) \in \invs(v) \subset \invs(w)$. So $c\in\R(w)$ by Lemma~\ref{lem:right-left-inversions},   which is a contradiction since $c\in\L(w)$.
\end{proof}

\begin{proof}[Proof of Proposition~\ref{prop:decomposition}]
Let $\lbrace u, v \rbrace$ be a bipartition of a permutation $w \in \Symn$. Consider the set of left inversions of $w$:
$$
\invs_\L(w)= \lbrace (a,c) \in \invs(w)\mid a \in \L(w) \text{ and } b \in \L(w) \rbrace,
$$
\emph{i.e.}, the inversions of $w$ where both letters are left letters. Similarly, we define $\invs_\R(w)$ the set of right inversions of $w$, i.e., the inversions where both letters are right letters. As $\lbrace u, v \rbrace$ is a bipartition of $w$ and by definition of  $\L(w)$ and $\R(w)$, we have:
\begin{align*}
\invs_\L(w) &= \invs_\L(u) \sqcup \invs_\L(v), \\
\invs_\R(w) &= \invs_\R(u) \sqcup \invs_\R(v).
\end{align*}
Now the inversions of $u_\L$, $v_\L$, and $w_\L$ (resp. $u_\R$, $v_\R$, and $w_\R$) are just a relabeling of the left (resp. right) inversions of $u$, $v$, and $w$. This proves that $\lbrace u_\L, v_\L \rbrace$ is a bipartition of $w_\L$ and similarly, $\lbrace u_\R, v_\R \rbrace$ is a bipartition of $w_\R$, which proves the first statement of the proposition.

Equality~\eqref{eq:deswdecomp} is readily seen: $w_\L$ is the (standardized) left part of $w$ while $w_\R$ is the (standardized) right part of $w$. If $n$ is in position $j$, all descents in positions $i < j-1$ are descents in $w_\L$ and all descents in positions $k \geq j$ are descents in $w_\R$. Moreover, there is no decent in position $j-1$ because the letter at this position is strictly smaller than $n$, the letter at position $j$.

The permutation $u$ and $v$ play similar role so one just has to show~\eqref{eq:desudecomp} without loss of generality. By Proposition~\ref{prop:decents} we have:
\begin{equation*}
\d(u) \leq \d(u_\L) + \d(u_\R).
\end{equation*} 
Indeed, if there is a descent in $u$ between $c$ and $a$, the descent is kept either on~$u_L$ or~$u_R$ between the standardized equivalent of $c$ and $a$. We need to prove that no new descent is created to show that~\eqref{eq:desudecomp} is satisfied. 

Suppose first that there is a descent in $u_\L$ that does not arise from a descent of~$u$. So there is two letters $c > a$ in $\L(w)$ which are not consecutive in $u$, but their standardized equivalent in $u_\L$ are. So $u=\dots c d_1d_2\dots d_k a\dots$, where $d_1d_2\dots d_k$ is a non empty word on $\mathbb N^*$ constituted only of letters  $d_i\in\R(w)$. By Proposition~\ref{prop:decents}, there no descent between $c$ and $d_1$ and so $d_1 > c$. Therefore $(a,d_1)\in\invs(u)$ with $a\in \L(w)$ and $d_1\in\R(w)$, contradicting  Lemma~\ref{lem:right-left-inversions}. So there is no new descent in $u_\L$.

Suppose now that there is a descent in $u_\R$ that does not arise from a descent of $u$. So there is two letters $c > b$ in $\R(w)$ which are not consecutive in $u$, but their standardized equivalent in $u_\R$ are. So $u=\dots c a_1a_2\dots a_k b\dots$, where $a_1a_2\dots a_k$ is a non empty word on $\mathbb N^*$ constituted only of letters  $a_i\in\L(w)$. By a similar reasoning as above, we have $a_k < b<c$. So $(a_k,c)\in\invs(u)$ with $a_k\in\L(w)$ and $c\in\R(w)$, which contradicts again Lemma~\ref{lem:right-left-inversions}. 

In conclusion, there is no descent that are created in $u_\L$ or $u_\R$ that does not arise from the descents of $u$, hence we have $\d(u) = \d(u_\L) + \d(u_\R).$
\end{proof}

\subsection{Proof of Proposition~\ref{prop:decreasing}} First we illustrate the proof with the continuation of Example~\ref{ex:TypeA}. We have:

$$
w_\R = \std(6341) = 4231, \, u_\R = \std(3461) = 2341, \, v_\R = \std(1634) = 1423.
$$
When we \emph{decrease} the permutations by removing the letter $4$, we obtain:
$$
\dec{w_\R} = 231, \,
\dec{u_\R} = 231, \,
\dec{v_\R} = 123.
$$
We lose one descent in $w_\R$ as $4$ is the first letter and maximal by definition of the right word. In $u_\R$, the descent between $4$ and $1$ is `kept' in $\dec{u_\R}$ because the letter before $4$ is $3 > 1$. In $v_\R$, the descent between $4$ and $2$ is `lost' because the letter before $4$ is $1 < 2$.

This phenomenon is at the core of the proof of Proposition~\ref{prop:decreasing}. As the other descents are not impacted by taking the decreasing word, we obtain indeed~Eq.\eqref{eq:decreasing}. Let us start with the following simple observation.

\begin{lem}
\label{lem:right-word-letters}
Let $\lbrace u, v \rbrace$ be a bipartition of a permutation $w \in S_n$ with $w_1 = n$. For any letter $a\in[n-1]$, we have $u=\dots a \dots n\dots$ if and only if $v=\dots n\dots a\dots$.
\end{lem}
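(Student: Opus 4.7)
The plan is to exploit the fact that $w_1=n$ forces every other letter to produce an inversion with $n$ in $w$, and then use the disjoint union property of the bipartition on inversions.

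First I would observe that since $n$ is the first letter of $w$, for every $a\in[n-1]$ the letter $a$ occurs after $n$ in $w$, i.e.\ $w=\dots n\dots a\dots$. By definition \eqref{eq:Invs}, this means $(a,n)\in\invs(w)$ for every $a\in[n-1]$. Equivalently, all letters in $[n-1]$ belong to $\R(w)$ and $\L(w)=\emptyset$ (so we are in the ``Case $k=1$'' of the main induction).

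Next I would use the bipartition hypothesis $\invs(w)=\invs(u)\sqcup\invs(v)$: for each $a\in[n-1]$, the inversion $(a,n)$ lies in \emph{exactly} one of $\invs(u)$ or $\invs(v)$. Combined with the dictionary between inversions and word order --- namely, $(a,n)\in\invs(u)$ iff $u=\dots n\dots a\dots$, and $(a,n)\notin\invs(u)$ iff $u=\dots a\dots n\dots$, and similarly for $v$ --- we conclude
\[
u=\dots a\dots n\dots\ \Longleftrightarrow\ (a,n)\notin\invs(u)\ \Longleftrightarrow\ (a,n)\in\invs(v)\ \Longleftrightarrow\ v=\dots n\dots a\dots,
\]
which is exactly the desired equivalence.

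There is essentially no obstacle here: the statement is a direct translation of the fact that bipartitions split the inversions $(a,n)$ into two complementary groups, and that the relative position of $a$ and $n$ in a permutation is entirely controlled by whether $(a,n)$ is an inversion. The only thing to be careful about is making the bookkeeping explicit, namely that $w_1=n$ is what guarantees $(a,n)\in\invs(w)$ in the first place, so that the dichotomy provided by the bipartition applies to every letter of $[n-1]$.
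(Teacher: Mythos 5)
Your argument is correct and is essentially identical to the paper's proof: both deduce $(a,n)\in\invs(w)$ from $w_1=n$ and then use the disjointness of $\invs(u)$ and $\invs(v)$ in the bipartition to place $(a,n)$ in exactly one of the two, translating this back into the relative order of $a$ and $n$ in each word. The extra remark about $\R(w)$ and $\L(w)$ is harmless but not needed.
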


In the previous example, with $\lbrace 2341, 1423 \rbrace$ a bipartition of $4231$, you see that the letters~$\lbrace 2, 3 \rbrace$ are before~$4$ in~$u$ and after~$4$ in~$v$, while~$1$ is after~$4$ in~$u$ and before~$4$ in~$v$. 

\begin{proof}
As $w_1 = n$, we know that $(a,n) \in \invs(w)$. Assume $u=\dots a \dots n\dots$. Then $(a,n) \not\in \invs(u)$. By definition of bipartitions, $\invs(w) = \invs(u) \cup \invs(v)$ so $(a,n) \in \invs(v)$ which means that $v=\dots n\dots a\dots$. The convers follows since the role of $u$ and $v$ are symmetric in the preceding argument. 
\end{proof}

\begin{proof}[Proof of Proposition~\ref{prop:decreasing}]
Let $\lbrace u, v \rbrace$ be a bipartition of a permutation $w \in S_n$ with $w_1 = n$. We prove that $\d(\dec{u}) + \d(\dec{v}) = \d(u) + \d(v) +1$. Note that this proves the proposition in general, as if $\lbrace u, v \rbrace$ is a bipartition of a permutation $w \in S_n$, then $\lbrace u_\R, v_\R \rbrace$ is a bipartition of a certain permutation $w_\R \in S_k$ with $k \leq n$ such that ${w_\R}(1) = k$. When the permutation $u$ is decreased to $\dec{u}$, all the descents of $u$ are left unchanged except the possible descent created by the letter $n$. 

Suppose first that $n$ is the last letter of $u$, then $\d(\dec{u}) = \d(u)$. By Lemma~\ref{lem:right-word-letters}, as all letters are the lefthand side of $n$ in $u$, they are therefore all at the right-hand side of $n$ in $v$ and $v_1 = n$. In particular $\d(\dec{v}) = \d(v) -1$, which implies the desired result. Similarly, if $u_1 =n$, then $v_n = n$ and we have $\d(\dec{u}) = \d(u) - 1$ and $\d(\dec{v}) = \d(v)$.

Now suppose that $n$ is placed in $u$ in a position that is neither $1$ nor $n$, which implies that it is also the case in $v$ by Lemma~\ref{lem:right-word-letters}. So  there is two letters $a<b$ such that:
\begin{itemize}
\item either $u = \dots bna \dots$ and $\d(\dec{u}) = \d(u)$; the descent is said to be {\em kept} in~$\dec{u}$. 
\item Or we have $u = \dots anb \dots$ and $\d(\dec{u}) = \d(u) - 1$, the descent is said to be {\em lost} in $\dec{u}$. 
\end{itemize}
If the descent is kept in $\dec{u}$ and lost in $\dec{v}$ (or the other way around), then~\eqref{eq:decreasing} is true. We prove that the descent cannot be kept in both, nor lost in both. We proceed by contradiction.

\paragraph{Suppose that the descent is kept in both $\dec{u}$ and $\dec{v}$.} Let  $a' < b'$ such that $v=\dots b'na'\dots$ (the descent is kept in $\dec{v}$). Since  $a$ is at the right-hand side  of $n$ in $u$, the letter $a$ is at the left-hand side of $n$ in $v$, by Lemma~\ref{lem:right-word-letters}. In particular $a \neq a'$. Similarly with  $b$, $a'$, and $b'$, we obtain that $u$ and $v$ are of the form:
\begin{align*}
u &= \dots a' \dots bna \dots b'  \dots, \\
v &= \dots a \dots b' n a' \dots b \dots.
\end{align*}
Note that we might have $a' = b$ or $a = b'$. Without loss of generality, we suppose $a' < a$; so in particular $a' < a < b$ (the descent is lost in $\dec{v}$). We have $(a', a) \in \invs(v) \subseteq \invs(w)$. Since  $(a,b) \in \invs(u) \subseteq \invs(w)$, we have by transitivity of the inversion set $\invs(w)$ (Proposition~\ref{prop:invs}~(\ref{prop-item:invs-trans})) that $(a', b) \in \invs(w)$. But $(a', b)\notin\invs(u)\cup\invs(v)=\invs(w)$, which is a contradiction.

\paragraph{Suppose that the descent is lost in both $\dec{u}$ and $\dec{v}$.} Let  $a' < b'$ such that $v=\dots a'nb'\dots$. Similarly as in the preceding case,  we obtain that $u$ and $v$ are of the form:
\begin{align*}
u &= \dots b' \dots anb \dots a'  \dots, \\
v &= \dots b \dots a' n b' \dots a \dots.
\end{align*}
Without loss of generality, we suppose $a' < a$. We obtain $a' < a < b$. Therefore $(a', b) \in \invs(u) \cap \invs(v)$, contradicting that $\invs(u) \cap \invs(v) = \emptyset$ ($\{u,v\}$ is bipartition of $w$).
\end{proof}

\section{The case of hyperoctahedral groups (type $B$)}\label{se:Bn}

For $n\in\mathbb N_{\geq 2}$, consider now the Coxeter system   $(W_n, S)$ of type $B_n$, i.e.,  the set of simple reflections is $S= \lbrace \tau_0, \tau_1, \dots, \tau_{n-1} \rbrace$ and the Coxeter graph of $(W_n,S)$ is: 
\begin{center}
\begin{tikzpicture}[sommet/.style={inner sep=2pt,circle,draw=blue!75!black,fill=blue!40,thick}]
	\node[sommet,label=below:$\tau_0$] (a0) at (0,0) {};
	\node[sommet,label=below:$\tau_1$] (a1) at (1,0) {} edge[thick] node[auto,swap] {4} (a0);
	\node[sommet,label=below:$\tau_2$] (a2) at (2,0) {} edge[thick] node[auto,swap] {} (a1);
	\node[] (a3) at (2.7,0) {} edge[thick] node[auto,swap] {} (a2);
	\node[] (a4) at (3.5,0) {} edge[thick,dotted] node[auto,swap] {} (a3);
	\node[sommet,label=below:$\tau_{n-1}$] (a5) at (4.2,0) {} edge[thick] node[auto,swap] {} (a4);
\end{tikzpicture}
\end{center}
In particular, the standard parabolic subgroup $W_{\{\tau_1,\dots,\tau_{n-1}\}}$ is isomorphic to the symmetric group $\mathcal S_n$. 

The group $W_n$ is known as the {\em hyperoctahedral group of rank $n$} and its elements are {\em signed permutations}. More precisely, write:
$$
\Bsetnn=\lbrace \bar n, \dots, \bar 1 \rbrace \cup \lbrace 1, \dots, n \rbrace,
$$ 
where  $\bar{i}:=-i$. Then $W_n=\{\sigma \in \mathcal S_{\Bsetnn}\mid \sigma(\bar i) = \overline{\sigma(i)}\}$ is a subgroup of  $\mathcal S_{\Bsetnn}$, the group of permutations on $\Bsetnn$. The simple reflection $\tau_0$ is the transposition $(\bar 1\ 1)\in \mathcal S_{\Bsetnn}$ and, for $i>0$, the simple reflection $\tau_i$ is the product of two simple transpositions in $\Bsetnn$: $\tau_i= (\overline{i+1}\ \bar i)(i\ i+1)$. It is well-known that the set of reflections of $(W_n,S)$ is:
$$
T=\{(\bar i \ i )\mid 1\leq i\leq n\}\sqcup\{  ( \bar i\ \overline{j})(i\ j) ,\ (\bar i\ j)(i\ \overline{j})\mid 1\leq i< j\leq n \}.
$$
In particular, the subset $\{  ( \bar i\ \overline{j})(i\ j),  \mid 1\leq i< j\leq n \}$ corresponds to the set of reflections of $W_{\{\tau_1,\dots,\tau_{n-1}\}}\simeq\mathcal S_n$. 
\smallskip

A signed permutation $\sigma \in W_n$, as a permutation on the set  $\Bsetnn$, has a one-line notation
$$
\sigma=\sigma_{\bar n}\dots \sigma_{\bar 2}\sigma_{\bar 1}| \sigma_1 \sigma_2\dots \sigma_n,
$$
with the property that $\bar \sigma_i = \sigma_{\bar i}$.

\begin{remark} We have of course $\overline{\bar i} = -(-i)=i$.  The symbol `$|$' in the notation above is used to mark the symmetry $\sigma_{\bar i}=\sigma(\bar i)=\sigma(-i)=-\sigma(i)=\overline{\sigma(i)}=\overline{\sigma_i}$). Note that even though it is enough to keep track of only the positive positions to retrieve the whole element, it is more convenient in this article to keep track of the negative positions as well.
\end{remark}

The right multiplication by the simple reflections is then given by:
\begin{align*}
\sigma\tau_0=&\  \sigma_{\bar n}\dots \sigma_{\bar 2}\sigma_{1}| \sigma_{\bar 1} \sigma_2\dots \sigma_n,\\
\sigma\tau_i=&\ \sigma_{\bar n}\dots\sigma_{\bar i} \sigma_{\overline{i+1}}\dots \sigma_{\bar 1}| \sigma_{1} \dots \sigma_{i+1}\sigma_i\dots \sigma_n,
\end{align*}
where  $1\leq i<n$. 

\begin{ex}\label{ex:TypeB1} Consider the following element of $W_6$:
$$
\sigma=4 \bar{5} \bar{2} \bar{6} 3 1 | \bar{1} \bar{3} 6 2 5 \bar{4}
$$
The one-line notation  means:
$$
\sigma (-6) = \sigma(\bar 6) = 4, \sigma(-5 )= \sigma(\bar 5) = \bar 5=-5, \dots,\sigma(\bar 1)= \sigma(-1) = 1, \sigma(1) = -1=\bar 1, 
$$ 
and so on. Since $ \sigma(\bar 6) = 4$, we also have $ \sigma(6) = \bar 4$.  A reduced word for $\sigma$ is: $\tau_0 \tau_2 \tau_1 \tau_0 \tau_3 \tau_2 \tau_1 \tau_0 \tau_2 \tau_1 \tau_2 \tau_3 \tau_4 \tau_5 \tau_4 \tau_3$.
\end{ex}

\subsection{Type $B$-permutations, inversions and standardization} We refer the reader to \cite[\S8.1]{BjBr05} for more details on inversions\footnote{In \cite{BjBr05}, the authors consider left inversion sets, which are the left inversion sets under taking inverses} and descents of type $B$. With the natural total order $\bar n<\overline{n-1}<\dots \bar 1<1<2\dots <n$, the (left) inversion set of $\sigma\in W_n$ has the following well-known interpretation:
\begin{eqnarray*}
T(\sigma)&=&\{(\sigma_{\bar i} \ \sigma_i )\mid 1\leq i\leq n,\ \sigma_{\bar i} > 0>\sigma_i \}\\
&&\quad \sqcup\ \{  (\sigma_{\bar i}\ \sigma_{\bar j})(\sigma_i\ \sigma_j) \mid 1\leq i< j\leq n,  \  \sigma_i>\sigma_j\textrm{ or }\sigma_{\bar i}>\sigma_j \}.
\end{eqnarray*}
For the proof that Conjecture~\ref{conj:1} holds in type $B$, we need the interpretation of {\em inversions} and {\em descents} on injective words over the alphabet $\Bsetnn$ as discussed in \S\ref{ss:TypeA-inversions} for the alphabet $[n]$. More precisely, any element $\sigma\in W_n$ is an element of  the symmetric group $\mathcal S_{\Bsetnn}$ and, therefore, has a set of inversion $\invs(\sigma)$  as an injective words  on the alphabet $\Bsetnn$ (see Eq.(\ref{eq:Invs})):
$$
\invs(\sigma)=\{(\sigma_i, \sigma_j)\mid  i,j\in \Bsetnn,\  i<j,\ \sigma_i > \sigma_j \}.
$$
It turns out that, for $1\leq i< j\leq n$:
\begin{itemize}
\item $(\sigma_{\bar i}, \sigma_i )\in \invs(\sigma)\iff(\sigma_{\bar i} \ \sigma_i )\in T(\sigma)$;
\item $(\sigma_{\bar j}\ \sigma_{\bar i})\in \invs(\sigma)\iff (\sigma_i\ \sigma_j)\in\invs(\sigma)\iff  (\sigma_{\bar i}\ \sigma_{\bar j})(\sigma_i\ \sigma_j)\in T(\sigma)$. 
\end{itemize}
Therefore:
\begin{equation}\label{eq:InvTypeB}
T(\sigma)=\{(a\ |a|) \mid  (a,|a|)\in \invs(\sigma)\}\sqcup\{(\bar a\ \bar b)(a\ b)\mid (a\ b)\in \invs(\sigma)\}.
\end{equation}
For simplicity, we say that $(a,b)\in\invs(\sigma)$ is {\em an inversion of $\sigma$}. Of course, as transpositions, $(\bar a\ \bar b)=(\bar b \ \bar a)$, but $(\bar a, \bar b)\not =(\bar b,\bar a))$ with the notations used for $\invs(\sigma)$. The right descent sets is:
$$
D_R(\sigma)= \{\tau_0 \textrm{ if } \sigma_1<0\} \sqcup \{\tau_i \mid 1\leq i < n,\ \sigma_i >\sigma_{i+1}\}.
$$ 

\begin{ex}[Continuation of Example~\ref{ex:TypeB1}]\label{ex:TypeB2} The element
$$
\sigma=4 \bar{5} \bar{2} \bar{6} 3 1 | \bar{1} \bar{3} 6 2 5 \bar{4}
$$
has right descent set $D_R(\sigma)=\{\tau_0,\tau_1,\tau_3,\tau_5\}$ and $d_R(\sigma)=4$. We mark the descents of $\sigma$ by a `dot' in the lone-line notation:
\begin{equation*}
4  \bar{5} \bar{2}  \bar{6} 3 1 | . \bar{1} . \bar{3} 6 . 2 5 . \bar{4}
\end{equation*}
The inversions of $\invs(\sigma)$ are:

\renewcommand{\arraystretch}{1.5}
\begin{center}
\begin{tabular}{|cc|c|cc|c|}
\hline
$\invs(\sigma)$&                    &$T(\sigma)$& $\invs(\sigma)$&         &$T(\sigma)$\\
\hline        
$(\bar{1}, 1)$       &                 &     $(\bar{1}\ 1)$             &$(\bar{4}, 1)$,       & $(\bar{1}, 4)$& $(\bar{4}\ 1)(\bar{1}\ 4)$\\
$(\bar{3}, 3)$       &                 &    $(\bar{3}\ 3) $           &$(\bar{4}, 3)$,       & $(\bar{3}, 4)$&$(\bar{4}\ 3)(\bar{3}\ 4)$ \\
$(\bar{4}, 4)$       &                 &   $ (\bar{4}\ 4)$             &$(\bar{4}, \bar{2})$, & $(2, 4)$& $(\bar{4}\ \bar{2})(2\ 4)$\\
$(\bar{4}, 5)$,       & $(\bar{5}, 4)$  & $(\bar{4}\ 5)(\bar{5}\ 4)$            & $(5, 6)$,             & $(\bar{6}, \bar{5})$&$(5\ 6)(\bar{6}\ \bar{5})$ \\
$(\bar{4}, 2)$,       & $(\bar{2}, 4)$ &  $(\bar{4}\ 2)(\bar{2}\ 4)$            & $(2, 6)$,             & $(\bar{6}, \bar{2})$&$(2\ 6)(\bar{6}\ \bar{2})$ \\
$(\bar{4}, 6)$,       & $(\bar{6}, 4)$ &   $(\bar{4}\ 6)(\bar{6}\ 4)$             & $(2, 3)$,             & $(\bar{3}, \bar{2})$&$(2\ 3)(\bar{3}\ \bar{2})$ \\
$(\bar{4}, \bar{3})$, & $(3, 4)$       &   $(\bar{4}\ \bar{3})(3\ 4)$               &$(\bar{3}, \bar{1})$, & $(1, 3)$&$(\bar{3}\ \bar{1})(1\ 3)$ \\
$(\bar{4}, \bar{1})$ ,& $(1, 4)$       &   $(\bar{4}\ \bar{1})(1\ 4)$                 & $(\bar{3}, 1)$,       & $(\bar{1}, 3)$&$(\bar{3}\ 1)(\bar{1}\ 3)$ \\
\hline
\end{tabular}.
\end{center}
The set $\invs(\sigma)$ has cardinality $29$ whereas $\ell(\sigma)=16=|T(\sigma)|$.  
\end{ex}

Even though the sets $\invs(\sigma)$ and $T(\sigma)$ are different, we obtain directly from Eq.(\ref{eq:InvTypeB}) the following characterization of bipartitions of elements of $W_n$.

\begin{lem}
\label{lem:B-invsT}
Let $w$ be an element of $W_n$, then $\lbrace u, v \rbrace$ forms a bipartition of $\sigma$ if and only if $\invs(w) = \invs(u)  \sqcup \invs(v)$.
\end{lem}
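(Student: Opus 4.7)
The plan is to leverage the explicit correspondence between the two notions of inversion encoded in Eq.~\eqref{eq:InvTypeB}. First I would introduce the involution $\iota(a,b) := (-b,-a)$ on ordered pairs and verify that $\invs(\sigma)$ is stable under $\iota$ for every $\sigma \in W_n$: the signed-permutation symmetry $\sigma_{\bar i} = -\sigma_i$ ensures that if $(a,b) \in \invs(\sigma)$ comes from positions $i < j$ in $\Bsetnn$ with $\sigma_j = a < b = \sigma_i$, then the positions $\bar j < \bar i$ produce $(-b,-a) \in \invs(\sigma)$. With this in hand, Eq.~\eqref{eq:InvTypeB} can be repackaged as a canonical surjection $\Psi \colon \invs(\sigma) \twoheadrightarrow T(\sigma)$ whose fibers are exactly the $\iota$-orbits: the fixed points $(a,-a)$ of $\iota$ map bijectively to the type-1 reflections $(\bar i\ i)$, while the two-element orbits $\{(a,b),(-b,-a)\}$ with $a+b \neq 0$ map bijectively to the type-2 reflections $(\bar a\ \bar b)(a\ b)$. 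In short, recovering $T(\sigma)$ from $\invs(\sigma)$ amounts to quotienting by $\iota$, while recovering $\invs(\sigma)$ from $T(\sigma)$ amounts to pulling back $\iota$-orbits.

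With that correspondence in place, both directions of the lemma reduce to the observation that $\Psi$ preserves and reflects disjoint-union decompositions into $\iota$-stable subsets. For the forward direction I would start from $T(w) = T(u) \sqcup T(v)$ and take $\iota$-orbit preimages under $\Psi$; since $\invs(u)$ and $\invs(v)$ are themselves $\iota$-stable (being inversion sets of elements of $W_n$), every fiber of $\Psi$ over $T(w)$ lies entirely inside one of $\invs(u)$ or $\invs(v)$, and assembling these fibers yields $\invs(w) = \invs(u) \sqcup \invs(v)$. For the converse, starting from $\invs(w) = \invs(u) \sqcup \invs(v)$, applying $\Psi$ directly gives $T(w) = T(u) \cup T(v)$; disjointness then follows because a common reflection $t \in T(u) \cap T(v)$ would force its $\iota$-orbit to meet both $\invs(u)$ and $\invs(v)$, and $\iota$-stability would force the entire orbit to lie in both, contradicting their disjointness.

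The main obstacle is purely bookkeeping: carefully distinguishing the two fiber types of $\Psi$ (singletons for the diagonal reflections $(\bar i\ i)$ versus two-element orbits for the others) and tracking $\iota$-stability throughout. There is no substantive combinatorial difficulty beyond unwinding Eq.~\eqref{eq:InvTypeB}, since the lemma ultimately says that the natural identification between the $\iota$-orbit decomposition of $\invs(\sigma)$ and the elements of $T(\sigma)$ transports bipartitions on one side to bipartitions on the other.
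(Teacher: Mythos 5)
Your argument is correct and is essentially the paper's own: the paper treats the lemma as a direct consequence of Eq.~\eqref{eq:InvTypeB}, and your involution $\iota$ and orbit map $\Psi$ simply make explicit the one-or-two-to-one correspondence between $\invs(\sigma)$ and $T(\sigma)$ that this equation encodes. The bookkeeping with $\iota$-stable fibers is a clean way to justify both directions, so no gap here.
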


We now state the equivalent of Proposition~\ref{prop:invs} for type $B$ elements. This is a direct translation of the biclosed condition on Coxeter groups inversion sets and also admits an elementary proof.

\begin{prop}
\label{prop:invsB}
Let $I$ be a set of tuples $(a,b)$ with $a,b \in \Bsetnn$ for some $n \geq 1$ and $a < b$. Then $I = \invs(\sigma)$ of a certain word $\sigma$ corresponding to an element of $W_n$ if and only if it satisfies the following for all $a,b,c \in \Bsetnn$ with $a < b < c$

\begin{enumerate}
\item $(a,b) \in I$ and $(b,c) \in I$ implies that $(a,c) \in I$;
\label{prop-item:invsB-trans}
\item $(a,c) \in I$ implies that either $(a,b) \in I$, or $(b,c) \in I$;
\label{prop-item:invsB-planar}
\item $(a,b) \in I$ with $|a| \neq |b|$ implies that $(\bar b,  \bar a) \in I$.
\label{prop-item:invsB-neg}
\end{enumerate}
\end{prop}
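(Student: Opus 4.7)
The plan is to prove the two implications separately. The ``only if'' direction is a routine position chase on the injective word $\sigma$ on the alphabet $\Bsetnn$. Conditions (1) and (2) follow exactly as in Proposition~\ref{prop:invs} by comparing where a third letter $b$ (with $a<b<c$) can sit relative to the positions of $a$ and $c$. For (3), the defining symmetry $\sigma(\bar i)=\overline{\sigma(i)}$ tells us that if $a$ and $b$ occupy positions $i$ and $j$ in $\sigma$, then $\bar a$ and $\bar b$ occupy positions $\bar i$ and $\bar j$; so $(a,b)\in\invs(\sigma)$ with $a<b$ forces $i>j$, hence $\bar i<\bar j$, meaning $\bar a$ precedes $\bar b$ in $\sigma$. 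Together with $\bar b<\bar a$, this yields $(\bar b,\bar a)\in\invs(\sigma)$ provided $\bar a\neq\bar b$, i.e.\ provided $|a|\neq|b|$.

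For the converse, assume $I$ satisfies (1), (2), (3). Since $\Bsetnn$ is order-isomorphic to $[2n]$, Proposition~\ref{prop:invs} produces a unique permutation $w\in\mathcal S_{\Bsetnn}$ with $\invs(w)=I$ (I use the well-known fact that an injective word on a totally ordered alphabet is determined by its inversion set). The remaining and main task is to show that $w$ in fact belongs to the subgroup $W_n\subseteq\mathcal S_{\Bsetnn}$ of signed permutations, i.e.\ that $w(\bar i)=\overline{w(i)}$ for every $i$.

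To upgrade $w$ to a signed permutation, I introduce the auxiliary permutation $\tilde w\in\mathcal S_{\Bsetnn}$ defined by $\tilde w(i):=\overline{w(\bar i)}$ and aim for the equality $\invs(\tilde w)=I$; combined with the uniqueness above, this forces $\tilde w=w$, which is precisely the condition $w\in W_n$. Unfolding the definition of $\tilde w$, one checks that for $a<b$ in $\Bsetnn$, the statement $(a,b)\in\invs(\tilde w)$ is equivalent to saying that the position of $\bar a$ in $w$ is strictly less than that of $\bar b$, equivalently (since $\bar b<\bar a$) to $(\bar b,\bar a)\in\invs(w)=I$. Condition (3) is exactly what turns this into the equivalence ``$(a,b)\in I\iff(\bar b,\bar a)\in I$'' when $|a|\neq|b|$; when $|a|=|b|$ the two pairs coincide and the equivalence is automatic.

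I expect no serious obstacle: the work is essentially a reduction to Proposition~\ref{prop:invs} together with the clean role played by (3) in forcing $w=\tilde w$. The only mildly delicate point is keeping track of signs when translating ``position of $a$ in $\tilde w$'' into ``position of $\bar a$ in $w$''; once the bookkeeping is done, the argument is short.
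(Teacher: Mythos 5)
Your proof is correct. Note that the paper does not actually write out a proof of this proposition: it merely remarks that the statement ``is a direct translation of the biclosed condition on Coxeter groups inversion sets and also admits an elementary proof,'' i.e.\ the intended route is to translate Proposition~\ref{prop:Biclos}~(4) for the type $B$ root system into the language of pairs $(a,b)$. What you supply is precisely the elementary combinatorial proof the paper alludes to but omits: the forward direction is the same position chase as in type $A$ plus the symmetry $\sigma(\bar i)=\overline{\sigma(i)}$, and for the converse you reduce to Proposition~\ref{prop:invs} via the order isomorphism $\Bsetnn\cong[2n]$ to get a unique $w\in\mathcal S_{\Bsetnn}$ with $\invs(w)=I$, then use condition~(3) (applied in both directions, to $(a,b)$ and to $(\bar b,\bar a)$) to show that the auxiliary permutation $\tilde w(i)=\overline{w(\bar i)}$ has the same inversion set, whence $\tilde w=w$ and $w\in W_n$. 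This is a clean and complete argument; the only cosmetic slip is your phrase ``provided $\bar a\neq\bar b$, i.e.\ provided $|a|\neq|b|$'' --- the degenerate case is $a=\bar b$ (so $|a|=|b|$ with $a\neq b$), in which case $(\bar b,\bar a)=(a,b)$ and the conclusion holds trivially anyway, so nothing is lost.
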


In particular, Conditions~\eqref{prop-item:invsB-trans} and~\eqref{prop-item:invsB-planar} are the same as in type $A$, only Condition~\eqref{prop-item:invsB-neg} is added to take into account the symmetry of the signed permutation. In other words, an element of $W_n$ is only a permutation of size $2n$ (on the set $\Bsetnn$) with some extra symmetry conditions.

\begin{defi}\label{def:Standard}
Let $w$ be the word corresponding to an element of $W_n$. Let $I$ be a subset of $\Bsetnn$. We write $\bar{I}$ the set $\lbrace \bar a; a \in I \rbrace$. We say that $I$ is \emph{symmetric} if $I = \bar{I}$. On the contrary, we say that $I$ is {\em antisymmetric} if $I \cap \bar{I} = \varnothing$. Now
\begin{itemize}
\item for $I$ an antisymmetric subset of $\Bsetnn$\footnote{This definition is actually valid on any subset, but we only need in this article  it for antisymmetric subsets.}, then $\std(\rest{w}{I})$ is the classical standardization of the word $w$ restricted to $I$. The result is a permutation (of type $A$) of size $|I|$.
\item for $I$ a symmetric subset of $\Bsetnn$, we write $\Bstd(\rest{w}{I})$ the \emph{type $B$} standardization of the word $w$ restricted to $I$, \emph{i.e.}, this is the classical standardization except that we use the alphabet $\Bsetkk$ where $k = \frac{|I|}{2}$. The result is then an element of type $W_k$.
\end{itemize}
\end{defi}

\begin{ex}
\label{ex:restrition}
Let $w = 4 \bar{5} \bar{2} \bar{6} 3 1 | \bar{1} \bar{3} 6 2 5 \bar{4}$. Take $I = \lbrace \bar{5}, \bar{2}, 4, 6 \rbrace$. The subset $I$ is antisymmetric and we have $\rest{w}{I} = 4 \bar{5} \bar{2} 6$. The orders on the values is $\bar{5} < \bar{2} < 4 < 6$ which gives $\std(4 \bar{5} \bar{2} 6) = 3 1 2 4$, an element of $\Sym{4}$.

Now take $I = \lbrace \bar{6}, \bar{3}, \bar{2}, 2, 3, 6 \rbrace$ which is symmetric. We have $\rest{w}{I} = \bar{2} \bar{6} 3 | \bar{3} 6 2$ which gives $\Bstd(\rest{w}{I}) = \bar{1} \bar{3} 2 | \bar{2} 3 1$ an element of $B_3$. 
\end{ex}

\subsection{Recursive decomposition and proof of Theorem~\ref{thm:Main} for type $B$ permutations} For the remainding of this section, we identify the elements of~$W_n$ with their words as signed permutations.  The proof of Theorem~\ref{thm:Main} in the case of type $B$ is very similar to type $A$. The difficulty lies in finding a recursive decomposition of type $B$ permutations that satisfies the same properties as in type $A$.

\begin{defi}
Let $w$ be an element of $W_n$ with $n \geq 2$. The \emph{decreased word of}~$w$ is obtained by removing both the letters $n$ and $\bar{n}$ from $w$ such that $\dec{w}$ is now a word corresponding to an element of $W_{n-1}$.
\end{defi}

We use the same notation as in type $A$ as it is always clear from the context which decreasing is applied. For example, if $w = 4 \bar{5} \bar{2} \bar{6} 3 1 | \bar{1} \bar{3} 6 2 5 \bar{4}$, then $\dec{w} = 4 \bar{5} \bar{2} 3 1 | \bar{1} \bar{3} 2 5 \bar{4}$. We state the following result, which proof is similar to the one of type $A$ and is left to the reader.

\begin{lem}
\label{lem:B-dec}
Let $\lbrace u, v \rbrace$ be a bipartition of an element $w \in W_n$ with $n > 1$, then $\lbrace \dec{u}, \dec{v} \rbrace$ is a bipartition of $\dec{w}$.
\end{lem}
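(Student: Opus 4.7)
The plan is to mimic closely the type $A$ proof of Lemma~\ref{lem:dec}, replacing the role of the single largest letter $n$ by the symmetric pair $\{\bar n, n\}$, and using Lemma~\ref{lem:B-invsT} to translate bipartitions into disjoint unions of word-inversion sets.

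First I would observe that $\dec{u}, \dec{v}, \dec{w}$ are genuine elements of $W_{n-1}$. Indeed, any $\sigma \in W_n$ is characterized by its values on $\Bsetnn$ together with the symmetry $\sigma_{\bar i} = \overline{\sigma_i}$; deleting the two positions where the letters $n$ and $\bar n$ sit preserves this symmetry (these positions are themselves symmetric because $\sigma_{i} = n \iff \sigma_{\bar i} = \bar n$), and yields a signed permutation of $[\![\overline{n-1}, n-1]\!]^*$, i.e., an element of $W_{n-1}$.

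Next, the key combinatorial identity to establish is
\begin{equation*}
\invs(\dec{\sigma}) = \{(a,b) \in \invs(\sigma) \mid |a| \neq n \text{ and } |b| \neq n\}
\end{equation*}
for any $\sigma \in W_n$. This is immediate from the definition of $\invs$ as a set of word-inversions on $\Bsetnn$: removing the letters $n$ and $\bar n$ from the word destroys exactly those inversions in which one of the two entries equals $n$ or $\bar n$, and leaves the relative order of all remaining letters unchanged.

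Now I apply this to the bipartition. By Lemma~\ref{lem:B-invsT}, $\{u,v\}$ being a bipartition of $w$ is equivalent to $\invs(w) = \invs(u) \sqcup \invs(v)$. Intersecting both sides with the set of pairs $(a,b)$ with $|a| \neq n$ and $|b| \neq n$, and using the identity above applied to $u$, $v$, and $w$, gives
\begin{equation*}
\invs(\dec{w}) = \invs(\dec{u}) \sqcup \invs(\dec{v}).
\end{equation*}
Lemma~\ref{lem:B-invsT} applied in the reverse direction (inside $W_{n-1}$, using step~1) then yields that $\{\dec{u}, \dec{v}\}$ is a bipartition of $\dec{w}$, which concludes the proof.

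There is no real obstacle here: the only point that requires a moment's thought is the symmetry, namely that deleting $n$ and $\bar n$ simultaneously preserves the signed-permutation structure; once that is noted, the proof reduces to the same bookkeeping as in type $A$.
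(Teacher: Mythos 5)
Your proof is correct and follows exactly the route the paper intends: the paper leaves this proof to the reader as being ``similar to the one of type $A$,'' and your argument is precisely the type~$A$ proof of Lemma~\ref{lem:dec} adapted via Lemma~\ref{lem:B-invsT}, with the right extra observation that deleting the symmetric pair $\{\bar n, n\}$ keeps $\dec{u},\dec{v},\dec{w}$ inside $W_{n-1}$.
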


\begin{defi} Let $w\in W_n$. We say that {\em $n$ is in positive (resp. negative) position in $w$} if $n=w(i)$ (resp. $n=w(\bar i)$), for some $1\leq i\leq n$. 
\end{defi}

Write $t_1=\tau_0$ and $t_{i}=(\bar i\ i)=\sigma_{i-1}t_{i-1} \sigma_{i-1}$ for $2\leq i\leq n$.  The longest element of $W_n$ is well-known to be:
$$
w_\circ= n \dots 2\,1 |\bar 1  \bar 2\dots \bar n = t_1t_2\dots t_n.
$$
For any element $w\in W_n$, we have $ww_\circ (i)=\overline{w(i)}$ for all $i\in \Bsetnn$. Therefore:
$$
 n  \textrm{ is in {\em negative position} in }w \iff n \textrm{ is in {\em positive position} in }ww_\circ. 
$$
Let $\lbrace u, v \rbrace$ be a bipartition of an element $w \in W_n$ with $n > 1$. Assume that $n$ is in negative position in $w$. Then $(n,\bar n)\in\invs(w)$. Since the roles of $u$ and $v$ are symmetric, we have without loss of generality that  $n$ is in negative position in $u$ and is in positive position in $v$. Therefore $n$ is in positive position in $ww_\circ$, $uw_\circ$ and $v$. Moreover, by Proposition~\ref{prop:ReductionLongest}, $\{ww_\circ,v\}$ is a bipartition of $uw_\circ$ and Conjecture~\ref{conj:1} holds for the triplet $u,v,w$ if and only if it holds for $ww_\circ,v,uw_\circ$. The following proposition resume these facts.
\begin{prop}
\label{prop:B-pos}
Let $\lbrace u, v \rbrace$ be a bipartition of an element $w \in W_n$ with $n > 1$. Assume that $n$ is in negative position in $w$ and $u$. Then :
\begin{enumerate}
\item $n$ is in positive position in $uw_\circ$, $v$ and $ww_\circ$;
\item $\{ww_\circ,v\}$ is a bipartition of $uw_\circ$;
\item $d_R(u)+d_R(v)=d_R(w)$ if and only if $d_R(ww_\circ)+d_R(v)=d_R(uw_\circ)$.
\end{enumerate}
\end{prop}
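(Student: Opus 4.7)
The plan is to dispatch the three statements in order: (1) by a direct computation on the one-line notation of signed permutations, and (2)--(3) as immediate reformulations of Proposition~\ref{prop:ReductionLongest}.

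For (1), the key observation is that for any $\sigma \in W_n$ and any $i \in \Bsetnn$ one has $\sigma w_\circ(i) = \sigma(\bar i) = \overline{\sigma(i)}$, so that $(\sigma w_\circ)^{-1}(n) = \overline{\sigma^{-1}(n)}$. Consequently, $n$ is in negative position in $\sigma$ if and only if $n$ is in positive position in $\sigma w_\circ$. Applying this with $\sigma = w$ and $\sigma = u$ immediately gives that $n$ is in positive position in both $ww_\circ$ and $uw_\circ$, which takes care of two of the three claims in (1). For the statement about $v$, I would first translate ``$n$ in negative position in $\sigma$'' into the inversion condition $(\bar n, n) \in \invs(\sigma)$: this is forced by the symmetry $\sigma_{\bar i} = \overline{\sigma_i}$, since $n$ in negative position means that $n$ lies to the left of the bar, hence $\bar n$ lies to the right, so the letter $n$ appears before the letter $\bar n$ in $\sigma$. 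Now by Lemma~\ref{lem:B-invsT}, the bipartition $\{u, v\}$ of $w$ gives $\invs(w) = \invs(u) \sqcup \invs(v)$; since $(\bar n, n)$ lies in $\invs(u)$ by hypothesis, it cannot lie in $\invs(v)$, so $n$ is in positive position in $v$.

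Statement (2) is then a direct instance of Proposition~\ref{prop:ReductionLongest}: the equivalence of (i) and (iii) applied to the triplet $(u, v, w)$ gives that $\{ww_\circ, v\}$ is a bipartition of $uw_\circ$. Statement (3) is the ``Moreover'' clause of the same proposition, which asserts that Conjecture~\ref{conj:1} holds for $(u, v, w)$ if and only if it holds for $(ww_\circ, v, uw_\circ)$; in our notation, this is exactly the equivalence $d_R(u) + d_R(v) = d_R(w) \iff d_R(ww_\circ) + d_R(v) = d_R(uw_\circ)$.

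There is no real obstacle in this proof. The whole content is the bookkeeping that converts the positional statement ``$n$ in negative position in $\sigma$'' into the combinatorial datum $(\bar n, n) \in \invs(\sigma)$, together with one invocation of Proposition~\ref{prop:ReductionLongest}. The proposition plays the role of a convenient reduction lemma, letting us assume, for the forthcoming inductive argument in type $B$, that the letter $n$ sits in positive position in the element whose bipartition we are analyzing.
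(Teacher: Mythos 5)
Your proof is correct and follows essentially the same route as the paper's: the identity $\sigma w_\circ(i)=\overline{\sigma(i)}$ to flip positive/negative positions, the disjointness $\invs(u)\cap\invs(v)=\emptyset$ to place $n$ in positive position in $v$, and Proposition~\ref{prop:ReductionLongest} for parts (2) and (3). (Minor note: your convention $(\bar n,n)\in\invs(\sigma)$ is the correct one, consistent with the paper's examples, even though the paper's own text momentarily writes $(n,\bar n)$.)
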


The above proposition allows us to reduce the proof of Theorem~\ref{thm:Main}  for type $B$ to the set of elements of $W_n$ with $n$ in positive position. 

\begin{defi}
\label{def:Bdecomp}
Let $\lbrace u, v \rbrace$ be a bipartition of an element $w \in W_n$. Assume that~$n$ is in positive position in $w$. 

We define the \emph{right letters} of~$w$, $\R(w)$, the \emph{forgotten letters} of~$w$, $\F(w)$, and the \emph{left letters} of~$w$, $\L(w)$  as follows:

\begin{align*}
\R(w) &:= \lbrace a\mid (a,n) \in \invs(w) \rbrace \cup \lbrace n \rbrace; \\
\F(w) &:= \overline{\R(w)}; \\
\L(w) &:= \lbrace a \mid a \not\in \R(w) \cup \F(w) \rbrace.
\end{align*}
These letters are elements of $[\bar n,n]^*$. The {\em left} (resp. {\em right}) {\em words} of $w$, $u$, and $v$ are respectively:
\begin{align*}
w_\L &= \Bsrest{w}{\L(w)}, & w_\R &= \srest{w}{\R(w)} \\
u_\L &= \Bsrest{u}{\L(w)}, & u_\R &= \srest{u}{\R(w)} \\
v_\L &= \Bsrest{v}{\L(w)}, & v_\R &= \srest{v}{\R(w)}. 
\end{align*}
\end{defi}

\begin{remark}  Assume that~$n$ is in positive position in $w\in W$. Observe that   all letters of $\R(w)$ are in positive positions: so $\R(w)$ is an antisymmetric subset of $\Bsetnn$ (see Definition~\ref{def:Standard}). On the other hand, $\L(w)$ is defined as the complement of a symmetric set of letters, so it is itself a symmetric subset of $\Bsetnn$. 
\end{remark}

In the examples below,  whenever two letters belong to the same set $\L$, $\R$, or $\F$, we say that they are of the same \emph{color}. In the following, we often color left letters in red, right letters in blue and forgotten letters in black.

\begin{ex}[Continuation of Example~\ref{ex:TypeB2}]
\label{ex:typeB-pos1}
Consider $w = 4 \bar{5} \bar{2} \bar{6} 3 1 | \bar{1} \bar{3} 6 2 5 \bar{4}$. Observe that $n=6$ is in positive position in $w$. We claim that $u = 4 \bar{5} \bar{2} \bar{6} 3 \bar{1} | 1 \bar{3} 6 2 5 \bar{4}$ and $v = \bar{6} \bar{5} \bar{4} \bar{3} \bar{2} 1 | \bar{1} 2 3 4 5 6$ form a bipartition of $w$. Indeed, $v$ only contains the inversion $(\bar{1}, 1)$ and one can check that all other inversions of $w$ are also inversions of $u$. Conjecture~\ref{conj:1} holds in this case: 

\begin{align*}
\d(w) &= \d(4 \bar{5} \bar{2} \bar{6} 3 1 |. \bar{1}. \bar{3} 6. 2 5. \bar{4}), \\
\d(u) + \d(v) &= \d(4 \bar{5} \bar{2} \bar{6} 3 \bar{1} | 1 . \bar{3} 6 . 2 5 . \bar{4}) + d(\bar{6} \bar{5} \bar{4} \bar{3} \bar{2} 1 |. \bar{1} 2 3 4 5 6) = 3 + 1.
\end{align*}
Recall that in type $B$, $\d(w)$ only counts descents in nonnegative positions which we have marked with a dot for clarity.

\smallskip
\noindent
We obtain $\blue{\R(w)} = \lbrace \bar{4}, 2, 5, 6 \rbrace$, $\F(w) = \lbrace 4, \bar{2}, \bar{5}, \bar{6} \rbrace$, and $\red{\L(w)} = \lbrace \bar{3}, \bar{1}, 1, 3 \rbrace$. Therefore:

\begin{align*}
w &=   4 \bar{5} \bar{2} \bar{6} \red{ 3 1 | \bar{1} \bar{3}} \blue{ 6 2 5 \bar{4}} ,\\
u &= 4 \bar{5} \bar{2} \bar{6} \red{ 3 \bar{1} | 1 \bar{3}} \blue{ 6 2 5 \bar{4}}, \\
v &= \bar{6} \bar{5} \blue{\bar{4}} \red{\bar{3}} \bar{2} \red{1} | \red{\bar{1}} \blue{2} \red{3} 4 \blue{5 6}.
\end{align*}

\begin{align*}
w_\L &= \Bstd(\red{31 | \bar{1} \bar{3}}) = 2 1 | \bar{1} \bar{2}, & w_\R &= \std(\blue{6 2 5 \bar{4}}) = 4 2 3 1 ,\\
u_\L &= \Bstd(\red{3 \bar{1} | 1 \bar{3}}) = 2 \bar{1} | 1 \bar{2}, & u_\R &= \std(\blue{6 2 5 \bar{4}}) = 4 2 3 1, \\
v_\L &= \Bstd(\red{\bar{3} 1 | \bar{1} 3}) = \bar{2} 1 | \bar{1} 2, & v_\R &= \std(\blue{\bar{4} 2 5 6}) = 1 2 3 4. 
\end{align*}

\end{ex}

\begin{ex}
\label{ex:typeB-pos2}
Consider $w = 1 \bar{4} \bar{3} \bar{6} \bar{2} \bar{5} | 5 2 6 3 4 \bar{1}$. Observe that $n=6$ is in positive position in $w$. The elements $u = \bar{4} \bar{3} \bar{6} \bar{2} \bar{5} \bar{1} | 1 5 2 6 3 4$ and $v = 1 \bar{6} \bar{5} \bar{4} \bar{3} \bar{2} | 2 3 4 5 6 \bar{1}$ form a bipartition of $w$. We obtain $\blue{\R(w)} = \lbrace \bar{1}, 3, 4, 6 \rbrace$, $\F(w) = \lbrace 1, \bar{3}, \bar{4}, \bar{6} \rbrace$, and $\red{\L(w)} = \lbrace \bar{5}, \bar{2}, 2, 5 \rbrace$. Hence: 

\begin{align*}
w &= 1 \bar{4} \bar{3} \bar{6}\red{ \bar{2} \bar{5} | 5 2} \blue{ 6 3 4 \bar{1}} ,\\
u &= \bar{4} \bar{3} \bar{6} \red{\bar{2} \bar{5}} \blue{\bar{1}} | 1 \red{5 2} \blue{ 6 3 4} ,\\
v &= 1 \bar{6} \red{\bar{5}} \bar{4} \bar{3} \red{\bar{2}} | \red{2} \blue{3 4} \red{5} \blue{ 6 \bar{1}}.
\end{align*}

\begin{align*}
w_\L &= \Bstd(\red{\bar{2} \bar{5} | 5 2}) = \bar{1} \bar{2} | 2 1, & w_\R &= \std(\blue{6 3 4 \bar{1}}) = 4 2 3 1, \\
u_\L &= \Bstd(\red{\bar{2} \bar{5} | 5 2}) = \bar{1} \bar{2} | 2 1, & u_\R &= \std(\blue{\bar{1} 6 3 4}) = 1 4 2 3 ,\\
v_\L &= \Bstd(\red{\bar{5} \bar{2} | 2 5}) = \bar{2} \bar{1} | 1 2, & v_\R &= \std(\blue{3 4 6 \bar{1}}) = 2 3 4 1. 
\end{align*}

\end{ex}

\begin{ex}
\label{ex:typeB-pos3}
Consider $w = 2 \bar{1} \bar{3} \bar{4} 5 \bar{6} | 6 \bar{5} 4 3 1 \bar{2}$,. Observe that $n=6$ is in positive position in $w$. The elements  $u = 2 \bar{1} \bar{4} \bar{3} 5 \bar{6} | 6 \bar{5} 3 4 1 \bar{2}$ and $v = \bar{6} \bar{5} \bar{3} \bar{4} \bar{2} \bar{1} | 1 2 4 3 5 6$ form a bipartition of $w$.  As $n$ is in position $1$, $\blue{\R(w)}$ contains all letters in positive positions while $\F(w)$ contains all letters in negative positions and $\red{\L(w)}$ is empty. We get

\begin{align*}
w &= 2 \bar{1} \bar{3} \bar{4} 5 \bar{6} | \blue{6 \bar{5} 4 3 1 \bar{2}}, \\
u &= 2 \bar{1} \bar{4} \bar{3} 5 \bar{6} | \blue{6 \bar{5} 3 4 1 \bar{2}} ,\\
v &= \bar{6} \blue{\bar{5}} \bar{3} \bar{4} \blue{\bar{2}} \bar{1} |  \blue{1} 2 \blue{ 4 3 } 5 \blue{ 6}
\end{align*}

\begin{align*}
w_\R &= \std(\blue{6 \bar{5} 4 3 1 \bar{2}}) = 6 1 5 4 3 2, \\
u_\R &= \std(\blue{6 \bar{5} 3 4 1 \bar{2}}) = 6 1 4 5 3 2 ,\\
v_\R &= \std(\blue{\bar{5} \bar{2} 1 4 3 6}) = 1 2 3 5 4 6. 
\end{align*}

\end{ex}

Similarly to the the case of type $A$, we ``cut'' the word $w$ using the position of $n$ (and $\bar{n}$) into three parts for which the forgotten letters are always the symmetric of the right letters by definition.

\begin{lem}
\label{lem:b-cut}
Let $w\in W_n$ and assume that~$n$ is in positive position in $w$. Then $w$ is of the form:

\begin{equation*}
\begin{array}{ccc}
\dots \bar{n}   & \red{\dots | \dots} & \blue{n \dots} \\
\F(w)           & \red{\L(w)}         & \blue{\R(w)},
\end{array}
\end{equation*}
\emph{i.e.}, it contains first all the forgotten letters ending with $\bar{n}$ all in negative positions, then the left letters in negative then positive positions, then all the right letters starting with $n$. Moreover $\overline{\F(w)}=\R(w)$ and $\L(w)$ is a symmetric subset of $\Bsetnn$.
\end{lem}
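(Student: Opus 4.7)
The plan is essentially bookkeeping: I would locate the position of $n$ in the one-line notation of $w$, and then use the signed-permutation symmetry $\sigma_{\bar i}=\overline{\sigma_i}$ together with the definitions of $\R(w)$, $\F(w)$ and $\L(w)$ to read off where each class of letter sits.

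First, since by hypothesis $n$ sits in a positive position, write $n=\sigma_k$ with $k\in\{1,\dots,n\}$; then $\sigma_{\bar k}=\overline{\sigma_k}=\bar n$, so $\bar n$ occupies position $-k$. Recall that $(a,n)\in\invs(w)$ means that $a<n$ and $a$ appears strictly after $n$ in the word $w$. Because $n$ is the maximum of the alphabet $\Bsetnn$, the condition $a<n$ is automatic for any letter $a\neq n$, so
$$
\R(w)=\{\sigma_k=n,\sigma_{k+1},\dots,\sigma_n\};
$$
in other words, the right letters are exactly the letters occupying the positive positions $k,k+1,\dots,n$, and this block starts with $n$.

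Applying $\overline{(\cdot)}$ and using $\overline{\sigma_i}=\sigma_{\bar i}$ yields
$$
\F(w)=\overline{\R(w)}=\{\sigma_{\bar n},\sigma_{\overline{n-1}},\dots,\sigma_{\bar k}\},
$$
so the forgotten letters occupy precisely the negative positions $-n,\dots,-k$, and this block ends with $\sigma_{\bar k}=\bar n$. The identity $\overline{\F(w)}=\R(w)$ is immediate from $\F(w)=\overline{\R(w)}$. The remaining positions are $-k+1,\dots,-1,1,\dots,k-1$, and by definition $\L(w)$ consists exactly of the letters there. Since this set of positions is invariant under $i\mapsto -i$ and since the values at negated positions are negated, $\L(w)$ is stable under $a\mapsto\bar a$, i.e.\ it is symmetric. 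Assembling the three blocks left-to-right reproduces the tabular arrangement claimed. The only conceptual point (as opposed to bookkeeping) is the observation that $n$ being the maximum letter of $\Bsetnn$ upgrades ``$a$ appears after $n$'' into the full content of ``$(a,n)\in\invs(w)$'', which is what pins down $\R(w)$ as a contiguous block of consecutive positions rather than a scattered set; beyond this, I do not anticipate any real obstacle.
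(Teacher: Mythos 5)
Your proof is correct and follows essentially the same route as the paper's: locate $n$ at a positive position $k$, observe that since $n$ is maximal in $\Bsetnn$ the right letters are exactly those in positions $k,\dots,n$, deduce the placement of $\F(w)=\overline{\R(w)}$ from the symmetry $\sigma_{\bar i}=\overline{\sigma_i}$, and note that the leftover middle block of positions is symmetric under $i\mapsto -i$. Nothing is missing.
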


\begin{proof}
This is direct from the definition. Since $n$ is in position $k > 0$ and $\bar{n}$ is in position $\bar k$. The right letters are all letters $a < n$ such that $(a,n)$ is an inversion of $w$: they are all the letters placed to the right of $n$, in positive positions by definition. The forgotten letters are exactly the opposite of the right letters which by symmetry of $w$ are placed in all negative positions before $\bar k$. In the middle of the word, we obtain the set $\L(w)$ of left letter, which is a symmetric subset of $\Bsetnn$.
\end{proof}

The following proposition is the equivalent in the case of type $B$ of Propositions~\ref{prop:decomposition} in type $A$.

\begin{prop}
\label{prop:B-decomposition}
Let $w\in W_n$ such that $n$ is in positive position in $w$. Let $\lbrace u, v \rbrace$ be a bipartition of an element $w$. Then the left words $\lbrace u_\L, v_\L \rbrace$ form a bipartition of $w_\L$ and the right words $\lbrace u_\R, v_\R \rbrace$ form a bipartition of $w_\R$ and

\begin{align}
\label{eq:B-deswdecomp}
\d(w) &= \d(w_\L) + \d(w_\R), \\
\label{eq:B-desudecomp}
\d(u) &= \d(u_\L) + \d(u_\R), \\
\label{eq:B-desvdecomp}
\d(v) &= \d(v_\L) + \d(v_\R).
\end{align}
\end{prop}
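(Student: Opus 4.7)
The strategy is to mirror the type~$A$ proof of Proposition~\ref{prop:decomposition}, adapted to the three-block structure $\F$-$\L$-$\R$ of type~$B$.

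\textbf{Step 1 (key separation lemma).} I would first establish the type~$B$ analog of Lemma~\ref{lem:right-left-inversions}: no inversion $(a,c)\in\invs(w)$ satisfies $a\in\L(w)$ together with $c\in\R(w)$ or $c\in\F(w)$. For the $\L$/$\R$ case, if $c\in\R(w)\setminus\{n\}$ then $(c,n)\in\invs(w)$ by Definition~\ref{def:Bdecomp}, and transitivity (Proposition~\ref{prop:invsB}~(1)) forces $(a,n)\in\invs(w)$, so $a\in\R(w)$, contradicting $a\in\L(w)$. The $\L$/$\F$ case reduces to $\L$/$\R$ via the sign-symmetry axiom (Proposition~\ref{prop:invsB}~(3)) together with the symmetry of $\L(w)$.

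\textbf{Step 2 (bipartitions of $w_\L$, $w_\R$, and descent equality for $w$).} Restricting the disjoint union $\invs(w)=\invs(u)\sqcup\invs(v)$ to pairs inside $\L(w)\times\L(w)$ and $\R(w)\times\R(w)$ yields disjoint unions that, after the appropriate standardization, become $\invs(w_\L)=\invs(u_\L)\sqcup\invs(v_\L)$ and $\invs(w_\R)=\invs(u_\R)\sqcup\invs(v_\R)$; Lemma~\ref{lem:B-invsT} (in type~$B$) and Proposition~\ref{prop:invs} (in type~$A$) then produce the two bipartition claims. For \eqref{eq:B-deswdecomp}, Lemma~\ref{lem:b-cut} gives the precise block structure of $w$: if $n=w(k)$ then the left letters occupy the symmetric positions $\overline{k-1},\dots,\bar 1,1,\dots,k-1$ and $n$ is the first letter of the right block. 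Hence every right descent of $w$ lies strictly inside the left block or inside the right block (position $k-1$ is never a descent since $w(k)=n$ is maximal), and because standardization preserves relative order these split bijectively into the descents of $w_\L$ and $w_\R$.

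\textbf{Step 3 (descent equalities for $u$ and $v$).} This step is the main obstacle, and is the type~$B$ analog of Proposition~\ref{prop:decents} combined with a no-new-descent-under-standardization argument. I would first prove that every descent of $u$ at a position $\ge 0$ occurs between two letters of the same color. The $\L$/$\R$ and $\L$/$\F$ mixed cases in both directions are handled by combining Step~1 with the type~$A$ style argument of Proposition~\ref{prop:decents}: one direction is immediate from Step~1, while the opposite direction uses the planarity axiom (Proposition~\ref{prop:invsB}~(2)) applied to $(a,n)\in\invs(v)$ to force $(c,n)\in\invs(v)$, contradicting $c\in\L(w)$. The subtler $\F$/$\R$ and $\R$/$\F$ cases are resolved using the signed symmetry $u(\bar i)=-u(i)$: any such positive-position descent of $u$ is mirrored by a parallel descent at negative positions among letters of mirrored color, and an application of Step~1 to the inversion produced via $\bar n\in\F(w)$ by Proposition~\ref{prop:invsB}~(3) yields the required contradiction.

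Once the same-color property is established, the no-new-descent argument from the type~$A$ proof transposes verbatim: a hypothetical new descent of $u_\L$ (respectively $u_\R$) between two same-color letters non-adjacent in $u$ would force an intermediate letter to create a mixed inversion in $\invs(u)$, contradicting Step~1. Hence the descents of $u_\L$ and $u_\R$ are in bijection with the $\L$-$\L$ and $\R$-$\R$ descents of $u$ at positions $\ge 0$, yielding \eqref{eq:B-desudecomp}; equality \eqref{eq:B-desvdecomp} follows by the symmetric argument applied to $v$.
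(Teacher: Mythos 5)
Your overall architecture matches the paper's (a separation lemma, a same-colour property for descents, a no-new-descent argument under standardization), but two of your key claims are false as stated, and both failures occur exactly where type $B$ genuinely differs from type $A$. First, your Step~1 asserts that no inversion $(a,c)$ of $w$ has $a\in\L(w)$ and $c\in\F(w)$. This is wrong: in Example~\ref{ex:typeB-pos1} one has $3\in\L(w)$, $4\in\F(w)$ and $(3,4)\in\invs(w)$. Your proposed reduction via Proposition~\ref{prop:invsB}~(3) sends $(a,c)$ to $(\bar c,\bar a)$ with $\bar c\in\R(w)$ and $\bar a\in\L(w)$, i.e.\ to an inversion whose \emph{first} entry is a right letter --- a configuration that is perfectly allowed --- so no contradiction results. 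The correct statement (Lemma~\ref{lem:B-right-left-inversions}) only forbids $(a,c)$ with $a\in\F(w)\cup\L(w)$ and $c\in\R(w)$, or with $a\in\F(w)$ and $c\in\L(w)\cup\R(w)$. Consequently your exclusion of a descent consisting of an $\F$-letter followed by a smaller $\L$-letter has no proof; the paper handles this case in Proposition~\ref{prop:B-decents} by mirroring the \emph{descent} itself (not the inversion) to a descent between $\bar a\in\L(w)$ and $\bar c\in\R(w)$, which the type-$A$ argument then excludes.

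Second, the final accounting is incorrect. In type $B$, $\d(u)$ counts only descents in nonnegative positions; a nonnegative-position descent of $u$ between two letters of $\F(w)$ contributes to $\d(u)$ even though those letters appear in neither $u_\L$ nor $u_\R$, while $\d(u_\R)$ also picks up $\R$--$\R$ adjacencies sitting at \emph{negative} positions of $u$. Your claimed bijection with ``the $\L$--$\L$ and $\R$--$\R$ descents of $u$ at positions $\geq 0$'' therefore omits both contributions. The paper closes this by pairing each nonnegative $\F$--$\F$ descent of $u$ with the negative-position $\R$--$\R$ descent obtained from the symmetry $u(\bar i)=\overline{u(i)}$, and by noting that the type-$B$ standardization over the symmetric set $\L(w)$ cannot move a descent across position $0$. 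Finally, the no-new-descent step does not transpose ``verbatim'': the letters interposed between two non-adjacent same-colour letters may now come from two different other colours, and the paper needs a separate statement (Proposition~\ref{prop:B-no-extra-desc}) whose $\L$--$\L$ case requires a new argument tracking the position of $n$ in both $u$ and $v$ simultaneously.
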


Note that $w_\L$, $u_\L$, $v_\L$ and $w_\R$, $u_\R$, $v_\R$ belong to different types of Coxeter groups:  the left words are of type $B$ and the right words are of type $A$.

\begin{ex}[Continuation of Example~\ref{ex:typeB-pos1}]
Here, $w = 4 \bar{5} \bar{2} \bar{6} \red{ 3 1 | \bar{1} \bar{3}} \blue{ 6 2 5 \bar{4}}$ with $u = 4 \bar{5} \bar{2} \bar{6} \red{ 3 \bar{1} | 1 \bar{3}} \blue{ 6 2 5 \bar{4}}$ and $v = \bar{6} \bar{5} \blue{\bar{4}} \red{\bar{3}} \bar{2} \red{1} | \red{\bar{1}} \blue{2} \red{3} 4 \blue{5 6}$ forming a bipartition of $w$. We obtain that $u_\L = 2 \bar{1} | 1 \bar{2}$ and $v_\L = \bar{2} 1 | \bar{1} 2$ form a bipartition of $w_L = 2 1 | \bar{1} \bar{2}$ in~$W_2$, while $u_\R = 4 2 3 1$ and $v_\R = 1 2 3 4$ form a (trivial) bipartition of $w_\R = u_\R$ in~$\mathcal S_3$. Moreover, we have

\begin{align*}
4 &=& \d(w) &=& \d(4 \bar{5} \bar{2} \bar{6} \red{ 3 1 |. \bar{1}. \bar{3}} \blue{ 6 . 2 5 . \bar{4}}) &=& \d(2 1 |. \bar{1}. \bar{2}) + \d(4. 2 3 . 1) &=& 2 + 2 \\
3 &=& \d(u) &=& \d(4 \bar{5} \bar{2} \bar{6} \red{ 3 \bar{1} | 1 . \bar{3}} \blue{ 6 . 2 5 . \bar{4}}) &=& \d(2 \bar{1} | 1 . \bar{2}) + \d(4. 2 3 . 1) &=& 1 + 2 \\
1 &=& \d(v) &=& \d(\bar{6} \bar{5} \blue{\bar{4}} \red{\bar{3}} \bar{2} \red{1 | . \bar{1}} \blue{2} \red{3} 4 \blue{5 6}) &=& \d(\bar{2} 1 |. \bar{1} 2) + \d(1 2 3 4) &=& 1 + 0
\end{align*}
\end{ex}

We prove Proposition~\ref{prop:B-decomposition} in the next section. We first show how Conjecture~\ref{conj:1} is proved in the case of type $B$ using Propositions~\ref{prop:B-decomposition} and~Theorem~\ref{thm:Main} in the case of type $A$.

\begin{proof}[Proof of Conjecture~\ref{conj:1} in type $B$] As in type $A$, the proof is  by induction on $n\in \mathbb N^*$. The result is true for $n=1$ since  $W_1$ is of type $B_1=A_1$. Now, let $\lbrace u, v \rbrace$ be a bipartition of an element $w \in W_n$ with $n \geq 2$. We suppose that the statement of Conjecture~\ref{conj:1} is satisfied for all $w\in W_k$ with $1 \leq k < n$ such that $n$ is in positive position in $w$. 

Suppose first that $|\L(w)|=k$ with $k > 0$. By Proposition~\ref{prop:B-decomposition}, $\lbrace u_\L, v_\L \rbrace$ is a bipartition of $w_\L$, and~$\lbrace u_\R, v_\R \rbrace$ is a bipartition of $w_\R$,  $w_\L$ is an element of $W_k$, with $k < n$, and $w_\R$ is an element of $A_{n-1-k}$. Therefore $\lbrace u_\L, v_\L \rbrace$ satisfies Conjecture~\ref{conj:1} by induction while $\lbrace u_\R, v_\R \rbrace$ satisfies Conjecture~\ref{conj:1} using Theorem~\ref{thm:Main} in the case of type $A$. Hence: 
$$
\d(w_\L) = \d(u_\L) + \d(v_\L)\textrm{ and }\d(w_\R) = \d(u_\R) + \d(v_\R).
$$ 
Therefore $\d(w) = \d(u) + \d(v)$, by Eq.~\eqref{eq:B-deswdecomp}, Eq.~\eqref{eq:B-desudecomp}, and~Eq.~\eqref{eq:B-desvdecomp} of Proposition~\ref{prop:B-decomposition}.

Now suppose that $\L(w)=\emptyset$. Therefore~$n$ is positive position~$1$ (as in Example~\ref{ex:typeB-pos3}). Hence $w_\R\not =w$ is a standardization of the letters in positive positions in $w$ and an element of $\mathcal S_{n-1}$ such that $\d(w_\R) = \d(w)$. By Proposition~\ref{prop:B-decomposition}, $\lbrace u_\R, v_\R \rbrace$ form a bipartition of~$w_\R$ with $\d(u_R) = \d(u)$, $\d(v_\R) = \d(w)$. Applying Theorem~\ref{thm:Main} in  the case of type type $A$, we obtain $\d(w) = \d(u) + \d(v)$. 
\end{proof}

\subsection{Proof of Proposition~\ref{prop:B-decomposition}}

The proof is  similar to the one valid in type $A$, we start by proving the type $B$ analog of Lemma~\ref{lem:right-left-inversions}.

\begin{lem}
\label{lem:B-right-left-inversions}
Let $w\in W_n$ such that $n$ is in positive position in $w$. Let $\lbrace u, v \rbrace$ be a bipartition of an element $w$. Then there is no inversion $(a,c)$ of $u$ or $v$ such that $a \in \F(w) \cup \L(w)$ and $c \in \R(w)$, nor such that $a\in \F(w)$ and $c \in \L(w) \cup \R(w)$. In particular, $n$ is placed after all letters of $\F(w)$ and $\L(w)$ in both $u$ and $v$.
\end{lem}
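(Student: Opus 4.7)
The plan is to reduce the statement to a positional observation about $w$ itself, and then read it off Lemma~\ref{lem:b-cut}. First I would invoke Lemma~\ref{lem:B-invsT}: since $\{u,v\}$ is a bipartition of $w$, we have $\invs(u)\sqcup\invs(v)=\invs(w)$, so every inversion of $u$ or $v$ is automatically an inversion of $w$. It is therefore enough to prove that $\invs(w)$ itself contains no pair $(a,c)$ with $a\in\F(w)\cup\L(w)$ and $c\in\R(w)$, and none with $a\in\F(w)$ and $c\in\L(w)\cup\R(w)$.

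Next I would apply Lemma~\ref{lem:b-cut}, according to which the one-line notation of $w$ lists first the letters of $\F(w)$ (ending with $\bar n$), then the letters of $\L(w)$, then the letters of $\R(w)$ (starting with $n$). Recall that by definition $(a,c)\in\invs(w)$ requires both $a<c$ numerically and the letter $a$ to occupy a position strictly to the right of $c$ in $w$. In each configuration forbidden by the lemma, the smaller letter $a$ lies in a block entirely to the left of the block containing $c$, so the positional requirement fails and $(a,c)\notin\invs(w)$; a fortiori $(a,c)\notin\invs(u)\cup\invs(v)$.

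The final ``in particular'' statement is then immediate by specialising to $c=n\in\R(w)$: no pair $(a,n)$ is an inversion of $u$ or $v$ for $a\in\F(w)\cup\L(w)$, which exactly means that $a$ appears before $n$ in both words, i.e., $n$ is placed after every letter of $\F(w)\cup\L(w)$ in each of $u$ and $v$. I do not foresee a delicate step: the whole argument is a purely positional observation, and the bipartition hypothesis is used only to transfer inversions of $u$ and $v$ back to $w$; the main point is simply recognising that Lemma~\ref{lem:b-cut} already encodes the positional separation between the three colour classes.
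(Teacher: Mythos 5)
Your proof is correct and follows the same route as the paper's: the paper likewise deduces the claim directly from Lemma~\ref{lem:b-cut} (the block structure $\F(w),\L(w),\R(w)$ of $w$) together with the fact that $\invs(u)\sqcup\invs(v)=\invs(w)$ forces every inversion of $u$ or $v$ to be an inversion of $w$. You have simply written out in full the positional check that the paper leaves implicit.
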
 

\begin{proof}
This is a direct consequence of Lemma~\ref{lem:b-cut} describing how the left, right, and forgotten letters are placed in $w$ and of the fact that all inversions of $u$ and $v$ are inversions of $w$.
\end{proof}

\begin{prop}
\label{prop:B-decents}
Let $w\in W_n$ such that $n$ is in positive position in $w$. Let $\lbrace u, v \rbrace$ be a bipartition of an element $w$. If there is a descent in the word $u$ (or in the word $v$) between two letters $c$ and $a$, then either $a,c\in\R(w)$, or $a,c\in\L(w)$, or $a,c\in\F(w)$. In other words, a descent can only happen between two letters of similar colors.
\end{prop}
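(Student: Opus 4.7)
My plan is to mirror the type $A$ proof of Proposition~\ref{prop:decents}, handling the extra forgotten color via the $W_n$-symmetry $u_{\bar i} = \bar u_i$. Without loss of generality, assume the descent lies in $u$, so $u = \dots c a \dots$ with $c > a$ and $(a,c) \in \invs(u) \subseteq \invs(w)$. The first step is to narrow the cases: by Lemma~\ref{lem:B-right-left-inversions} the configurations with $a \in \F \cup \L$, $c \in \R$ or $a \in \F$, $c \in \L \cup \R$ are already forbidden, so it remains to rule out the three ``mixed'' possibilities (i) $a \in \R$, $c \in \L$; (ii) $a \in \R$, $c \in \F$; (iii) $a \in \L$, $c \in \F$.

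For (i) and (ii) I would argue just as in type $A$. Since $n \in \R$ while $c \in \L \cup \F$ one has $c < n$, hence $a < c < n$. Lemma~\ref{lem:B-right-left-inversions} places $n$ after every letter of $\L \cup \F$ in $u$, so $c$ is strictly before $n$; adjacency of $c, a$ together with $a < n$ then forces $a$ also strictly before $n$, whence $(a, n) \notin \invs(u)$. Since $a \in \R$ yields $(a,n) \in \invs(w)$ and the bipartition union is disjoint, $(a,n) \in \invs(v)$. Applying the planarity property Proposition~\ref{prop:invsB}\eqref{prop-item:invsB-planar} to $a < c < n$ with $(a,n) \in \invs(v)$ gives $(a,c) \in \invs(v)$ or $(c,n) \in \invs(v)$; the first is excluded because $(a,c) \in \invs(u)$, so $(c,n) \in \invs(v) \subseteq \invs(w)$, forcing $c \in \R$, which contradicts $c \in \L$ in case (i) and $c \in \F$ in case (ii).

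The genuinely new step, absent in type $A$, is case (iii); my plan there is to reduce it to (i) by symmetry. First, $|a| = |c|$ would force $c = \bar a$, but $\L$ is symmetric so $a \in \L$ would give $c \in \L$, contradicting $\L \cap \F = \varnothing$; hence $|a| \neq |c|$. Applying the signed-permutation symmetry $u_{\bar i} = \bar u_i$ to the adjacent pair $c, a$ at positions $j, j+1$ produces the adjacent pair $\bar a, \bar c$ at positions $\overline{j+1}, \bar j$ of $u$, and $\bar c < \bar a$ makes it a second descent. Now $\bar a \in \L$ by symmetry of $\L$ and $\bar c \in \overline{\F} = \R$, so this dual descent realises case (i) with $(a', c') = (\bar c, \bar a)$, which we have already ruled out. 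This finishes case (iii) and the proof. The main obstacle I foresee is exactly noticing that (iii) must be treated separately and that the type-$B$ symmetry of $u$ makes every descent appear twice, allowing the reduction to (i).
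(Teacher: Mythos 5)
Your proof is correct and follows essentially the same route as the paper: the paper packages your cases (i) and (ii) into Lemma~\ref{lem:B-A-descent-cases} (the type-$A$ planarity argument applied with $\L(w)\cup\F(w)$ playing the role of the type-$A$ left letters), rules out the $a\in\F(w)$, $c\in\L(w)$ configuration via Lemma~\ref{lem:B-right-left-inversions}, and disposes of your case (iii) by exactly the same signed-symmetry reflection of the descent to the pair $\bar a\in\L(w)$, $\bar c\in\R(w)$, including the same observation that $\bar a\neq c$ excludes the position-$0$ descent.
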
 

This statement is true for all descents, not only for the right descents, i.e.,  in nonnegative positions.

\begin{ex}[Continuation of example~\ref{ex:typeB-pos1}]
Recall that $w = 4 \bar{5} \bar{2} \bar{6} \red{ 3 1 | \bar{1} \bar{3}} \blue{ 6 2 5 \bar{4}}$ with $u = 4 \bar{5} \bar{2} \bar{6} \red{ 3 \bar{1} | 1 \bar{3}} \blue{ 6 2 5 \bar{4}}$ and $v = \bar{6} \bar{5} \blue{\bar{4}} \red{\bar{3}} \bar{2} \red{1} | \red{\bar{1}} \blue{2} \red{3} 4 \blue{5 6}$ as  bipartition. Now,  all descents in $u$ and $v$ are marked with a dot: observe that there are only descents  between letters of the same color:
$
u = 4 .\bar{5}\bar{2}. \bar{6} \red{3 . \bar{1} } \red{| 1 . \bar{3}} \blue{6 . 2 5 . \bar{4}} \textrm{ and }
v = \bar{6} \bar{5} \blue{\bar{4}} \red{\bar{3}} \bar{2} \red{1 } \red{|. \bar{1}} \blue{2} \red{3} 4 \blue{5 6}.
$

\end{ex}

Before proving Proposition~\ref{prop:B-decents}, we make an observation on the proof Proposition~\ref{prop:decents} in type $A$: we notice that the reasoning does not depend of the type and can actually be applied in certain cases in type $B$ as stated in the following lemma.

\begin{lem}[Corollary of Proposition~\ref{prop:decents}]
\label{lem:B-A-descent-cases}
Let $w\in W_n$ such that $n$ is in positive position in $w$. Let $\lbrace u, v \rbrace$ be a bipartition of an element $w$. Then if there is a descent in~$u$ (or in~$v$) between two letters~$c$ and~$a$, then either $a,c\in\R(w)$ or $a,c\in\L(w) \cup \F(w)$.
\end{lem}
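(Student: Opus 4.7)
The plan is to reproduce, essentially word-for-word, the proof of Proposition~\ref{prop:decents} from type~$A$, with two substitutions: Lemma~\ref{lem:right-left-inversions} is replaced by Lemma~\ref{lem:B-right-left-inversions}, and the planarity clause~\eqref{prop-item:invs-planar} of Proposition~\ref{prop:invs} by its type-$B$ counterpart~\eqref{prop-item:invsB-planar} of Proposition~\ref{prop:invsB}. The correct analog of the ``left letters'' is $\L(w) \cup \F(w)$, since by Lemma~\ref{lem:B-right-left-inversions} this is exactly the set of letters appearing strictly before $n$ in both $u$ and $v$, mirroring the role of $\L(w)$ in type~$A$.

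I would fix a descent $u = \dots c a \dots$ in $u$ with $c > a$ (the argument for $v$ is symmetric), so that $(a,c) \in \invs(u) \subseteq \invs(w)$. If $c \in \R(w)$, then the forbidden-inversion clause of Lemma~\ref{lem:B-right-left-inversions} immediately forces $a \in \R(w)$, settling this case. Otherwise $c \in \L(w) \cup \F(w)$, and I argue by contradiction that $a \notin \R(w)$. Since $n \in \R(w)$ and $\R(w)$, $\L(w)$, $\F(w)$ are disjoint, $c < n$, hence $a < c < n$. Lemma~\ref{lem:B-right-left-inversions} places $n$ after every letter of $\L(w) \cup \F(w)$ in $u$, so $u$ has the shape $\dots c a \dots n \dots$ and $(a,n) \notin \invs(u)$. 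Supposing $a \in \R(w)$, the inversion $(a,n) \in \invs(w)$ must therefore belong to $\invs(v)$, while $(a,c) \in \invs(u)$ forces $(a,c) \notin \invs(v)$. Applying planarity~\eqref{prop-item:invsB-planar} of Proposition~\ref{prop:invsB} to the triple $a < c < n$ then pushes $(c,n)$ into $\invs(v) \subseteq \invs(w)$, so $c \in \R(w)$, a contradiction.

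The step I expect to require the most care is the verification that the inequalities $a < c < n$ in the total order on $\Bsetnn$ and the planarity condition of Proposition~\ref{prop:invsB} apply transparently to signed letters. This should pose no real difficulty, since Proposition~\ref{prop:invsB} is stated for arbitrary elements of $\Bsetnn$ and $n$ is the unique maximum in the chosen order. It is important to note what the argument does \emph{not} establish: it provides no mechanism to separate $\L(w)$ from $\F(w)$, which is precisely the stronger content of Proposition~\ref{prop:B-decents} and will require additional input beyond the type-$A$ template.
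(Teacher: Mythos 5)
Your proposal is correct and follows essentially the same route as the paper, which likewise proves this lemma by transplanting the type-$A$ argument of Proposition~\ref{prop:decents} verbatim, substituting Lemma~\ref{lem:B-right-left-inversions} for Lemma~\ref{lem:right-left-inversions} and Proposition~\ref{prop:invsB} for Proposition~\ref{prop:invs}, with $\L(w)\cup\F(w)$ playing the role of the type-$A$ left letters. Your closing remark that this does not yet separate $\L(w)$ from $\F(w)$ matches the paper's division of labor, where that refinement is handled in Proposition~\ref{prop:B-decents} via the symmetry of signed permutations.
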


\begin{proof}
Consider the form of the word $w$ in Lemma~\ref{lem:b-cut} and observe that the definition of the left letters in type $A$ (everything before the letter $n$ in $w$) corresponds to $\L(w) \cup \F(w)$. Similarly, the right letters of type $A$ correspond to $\R(w)$ in type~$B$. The exact same line of reasoning used to prove Proposition~\ref{prop:decents} applies here with Lemma~\ref{lem:B-right-left-inversions} and Proposition~\ref{prop:invsB}.
\end{proof}

\begin{proof}[Proof of Proposition~\ref{prop:B-decents}]
Suppose that $u$ is of the form $\dots ca \dots$ with $a < c$. Note that we do not specify if the descent is in positive, negative or $0$ position as this does not impact the proof. In particular, we could have $a = \bar c$ if the descent is in position~$0$.  Lemma~\ref{lem:B-A-descent-cases} states that either $a,c \in \R(w)$ or $a,c \in \L(w) \cup \F(w)$. By Lemma~\ref{lem:B-right-left-inversions}, we cannot have $c \in \L(w)$ and $a \in \F(w)$. The only case to consider is then $c \in \F(w)$ and $a \in \L(w)$ which we suppose by contradiction. As $\L(w)$ is symmetric, we also have $\bar a \in \L(w)$ and in particular $\bar a \neq c$ and the descent is not in position $0$. By symmetry, this implies that there is a descent between $\bar a \in \L(w)$ and $\bar c \in \R(w)$ (because $\F(w) = \overline{\R(w)}$) which is impossible accordingly to Lemma~\ref{lem:B-A-descent-cases}.
\end{proof}

\begin{prop}
\label{prop:B-no-extra-desc}
Let $w\in W_n$ such that $n$ is in positive position in $w$. Let $\lbrace u, v \rbrace$ be a bipartition of an element $w$.  Assume that $u$ (or $v$) is of the form $\dots b \dots c \dots$ with $b$ and $c$  both in the same set $I$ such that: (1)  $I=\L(w)$, $I=\R(w)$ or $I=\F(w)$; (2) the subword between $b$ and $c$ is non-empty  all its letters are in $\Bsetnn\setminus I$. Then $b < c$.
\end{prop}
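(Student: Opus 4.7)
The plan is to argue by contradiction: assume $b>c$ (equality is impossible since $u$ is an injective word). The first move is to extract the basic order information by applying Proposition~\ref{prop:B-decents} to the two adjacent pairs of different color, namely $(b,d_1)$ and $(d_k,c)$. Since no descent is allowed between letters of different colors, we get $b<d_1$ and $d_k<c$, and combined with the assumption $b>c$ this yields the chain
$$d_1 > b > c > d_k.$$

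I then split into three cases according to the color $I$. The cases $I=\R(w)$ and $I=\F(w)$ fall out directly from Lemma~\ref{lem:B-right-left-inversions}. When $I=\R(w)$, the pair $(d_k,b)$ is an inversion in $\invs(u)\subseteq \invs(w)$ with $d_k\in \L(w)\cup \F(w)$ and $b\in \R(w)$, which Lemma~\ref{lem:B-right-left-inversions} forbids. When $I=\F(w)$, the inversion $(c,d_1)\in \invs(u)$ has $c\in \F(w)$ and $d_1\in \L(w)\cup \R(w)$, which is likewise forbidden.

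The main obstacle is the case $I=\L(w)$, since Lemma~\ref{lem:B-right-left-inversions} does not outlaw $(\L,\F)$ or $(\R,\L)$ inversions outright. I first peel off the easy sub-configurations: if $d_1\in \R(w)$ then $(c,d_1)$ is a forbidden $(\L,\R)$-inversion, and if $d_k\in \F(w)$ then $(d_k,b)$ is a forbidden $(\F,\L)$-inversion. Hence I may assume $d_1\in \F(w)$ and $d_k\in \R(w)$, which forces $k\geq 2$. Let $j$ be the largest index with $d_j\in \F(w)$; then $1\leq j\leq k-1$, $d_{j+1},\ldots,d_k\in \R(w)$, and Proposition~\ref{prop:B-decents} yields $d_j<d_{j+1}$ (different colors, no descent). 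The key observation is that every $d_m$ with $j+1\leq m\leq k$ must satisfy $d_m<c$: otherwise $d_m>c$, and since $d_m$ appears before $c$ in $u$, we would have $(c,d_m)\in \invs(u)$ with $c\in \L(w)$ and $d_m\in \R(w)$, which Lemma~\ref{lem:B-right-left-inversions} forbids. Applied to $m=j+1$ this gives $d_j<d_{j+1}<c<b$, so that $(d_j,b)\in \invs(u)$ is an $(\F,\L)$-inversion, contradicting Lemma~\ref{lem:B-right-left-inversions}.

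The case when the subword of the form $\dots b \dots c\dots$ lives inside $v$ rather than $u$ is identical by symmetry, since the only properties used are that the inversions of $u$ (respectively $v$) sit inside $\invs(w)$ and obey the forbidden-pair restrictions of Lemma~\ref{lem:B-right-left-inversions}. The place where I expect to have to be most careful is the transition-point argument in the $I=\L(w)$ case: locating the last $\F$-letter $d_j$ and exploiting the $\R$-letters that follow it is precisely what converts the "allowed" $(\L,\F)$ and $(\R,\L)$ inversions into a genuinely forbidden one, and this is the only step that really uses the assumption $b>c$ beyond the initial ordering.
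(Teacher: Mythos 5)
Your proof is correct, and for the two easy cases $I=\R(w)$ and $I=\F(w)$ it coincides with the paper's argument (the paper phrases it via the letter adjacent to the smaller endpoint; you use the chain $d_1>b>c>d_k$ obtained from Proposition~\ref{prop:B-decents}, which is the same information). The genuine divergence is in the case $I=\L(w)$. The paper pins down the letter $a$ immediately preceding the smaller endpoint, shows $a\in\R(w)$ via Lemma~\ref{lem:B-right-left-inversions}, and then derives a contradiction by tracking the letter $n$: since the inversion between $a$ and the larger endpoint already lives in $\invs(u)$, it cannot be in $\invs(v)$, and the placement of $n$ after all left letters in both $u$ and $v$ forces $(a,n)\notin\invs(u)\sqcup\invs(v)=\invs(w)$, contradicting $a\in\R(w)$. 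You instead stay entirely inside the single word $u$: you rule out $d_1\in\R(w)$ and $d_k\in\F(w)$ directly, and in the remaining configuration you locate the last forgotten letter $d_j$, use the absence of a descent at the $\F\to\R$ transition to get $d_j<d_{j+1}<c<b$, and land on a forbidden $(\F,\L)$-inversion $(d_j,b)$. Your route costs a finer sub-case analysis but buys locality: it never invokes $v$, the disjointness $\invs(u)\cap\invs(v)=\varnothing$, or the clause of Lemma~\ref{lem:B-right-left-inversions} about the position of $n$, only the forbidden-pair restrictions and Proposition~\ref{prop:B-decents}. The paper's version is shorter but leans on the global bipartition structure. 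Both are valid, and your closing remark that the $v$ case is symmetric is fine since every tool you use applies equally to $v$.
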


\begin{proof}
We suppose by contradiction that $u = \dots c u' b \dots$ with $b < c$ and $b,c \in I$ with  $I=\L(w)$, $I=\R(w)$ or $I=\F(w)$ such that $u'$ is not empty and with letters not in $I$. We proceed to a case by case analysis and show that we always reach a contradiction

\smallskip
\noindent\textbf{Case $b,c \in \L(w)$.}  The word $u'$ is non-empty by hypothesis, we call $a$ the last letter of $u'$ so $u$ is of the form $\dots c \dots a b \dots$. As $a \notin \L(w)$, there is no descent between $a$ and $b$ by Proposition~\ref{prop:B-decents} so $a < b$. In particular, $(a,c) \in \invs(u)$. As $c \in \L(w)$, by Lemma~\ref{lem:B-right-left-inversions}, we cannot have $a \in \F(w)$, so $a \in \R(w)$. Besides, as $(a,c) \in \invs(u)$, we have $(a,c) \not\in \invs(v)$. By Lemma~\ref{lem:B-right-left-inversions}, we also know that $n$ is placed after all letters of $\L(w)$ in both $u$ and $v$, this gives

\begin{align*}
u &= \dots c \dots ab \dots n \dots \\
v &= \dots a \dots c \dots n \dots.
\end{align*}
We see that $(a,n) \not\in \invs(u)$ and $(a,n) \not\in \invs(v)$. This contradicts the fact that $(a,n)\in\invs(w)$ since $a$ is a right letter of $w$.

\smallskip
\noindent\textbf{Case $b,c \in \R(w)$.} We call $a$ the last letter of $u'$ so that $u = \dots c \dots ab \dots$. As before, because of Proposition~\ref{prop:B-decents}, $a<b$. We have $a \in \L(w) \cup \F(w)$ and $(a,c) \in \invs(u)$: this contradicts Lemma~\ref{lem:B-right-left-inversions}.

\smallskip
\noindent\textbf{Case  $b,c \in \F(w)$.} Obtained by symmetry of the case $b,c \in \R(w)$. 
\end{proof}

We are finally ready to prove Proposition~\ref{prop:B-decomposition}, hence conluding the proof of Theorem~\ref{thm:Main} in the case of type $B$.

\begin{proof}[Proof of Proposition~\ref{prop:B-decomposition}]
Let $w\in W_n$ such that $n$ is in positive position in $w$. Let $\lbrace u, v \rbrace$ be a bipartition of an element $w$. By a similar argument as in the case of type~$A$, we obtain that $\lbrace u_\L, v_\L \rbrace$ form a bipartition of $w_\L$ and $\lbrace u_\R, v_\R \rbrace$ form a bipartition of $w_\R$, by simply restricting the sets of inversions to the left word (resp. right word) letters and using Lemma~\ref{lem:B-invsT}.

We first show Equality~\eqref{eq:B-deswdecomp}. We use Lemma~\ref{lem:b-cut} and suppose that $n$ is in position $k > 0$. There is a descent at position $ 0 \leq i < k - 1$ in $w$ if and only if there is a descent at position $i$ in $w_\L$ as $w_\L$ is the type $B$ standardization of the middle factor of $w$. Similarly, there is a descent at position $i > k$ in $w$ if and only if there is a descent at position $i-k+1$ in $w_\R$. Besides, there is no descent in $w$ at position $k-1$. We find $\d(w) = \d(w_\L) + \d(w_\R)$ (as $\d(w)$ and $\d(w_\L)$ only count descents in nonnegative positions).

We now prove Equation~\eqref{eq:B-desudecomp} (which is equivalent to Equation~\eqref{eq:B-desvdecomp}). Similarly as in type~$A$, Proposition~\ref{prop:B-decents} implies:
\begin{equation*}
\d(u) \leq \d(u_\L) + \d(u_\R).
\end{equation*} 
Indeed, suppose that we have a descent $ca$ in $u$ in positive position. Either $a,c \in \L(w)$ and the descent is kept in $u_\L$, still in positive position. Or $a,c \in \R(w)$ and the descent is kept in $u_\R$. Finally, if $a,c \in \F(w)$, then $u$ also has a descent in negative position between $\bar a$ and $\bar c$ in $\R(w)$ which was not counted in $\d(u)$ but is kept in $u_\R$ and counted in $\d(u_\R)$. 

We now claim that Proposition~\ref{prop:B-no-extra-desc} implies:
\begin{equation*}
\d(u_\L) + \d(u_\R) \leq \d(u).
\end{equation*} 
Suppose that there is a descent $ca$ in $u_\L$ in nonnegative position. This means that $u$ is of the form $\dots c' \dots a' \dots$ where $a' < c'$ are letters of $\L(w)$ which are standardized into $a$ and $c$ in $u_\L$. As they are consecutive in $u_\L$, this means that all letters between $c'$ and $a'$ are not in $\L(w)$. Then Proposition~\ref{prop:B-no-extra-desc} forces the word between $c'$ and $a'$ to be empty. Therefore $c'a'$ is a descent of $u$. 
Moreover, since $\L(w)$ is a symmetric subset of $\Bsetnn$, the $B$ standardization cannot change a negative position into a positive one and the descent $c'a'$ in $u$ is then in nonnegative position; therefore $c'a'$ is counted in $\d(u)$. 

Now suppose that there is a descent $ca$ in $u_\R$ (so in positive position). With similar arguments, using Proposition~\ref{prop:B-no-extra-desc}, we get that there is a descent $c'a'$ in $u$ with $a',c' \in \R(w)$. If $c'a'$ is in nonnegative position,  $c'a'$ is then counted in $\d(u)$. If $c'a'$ is in negative position, then $\overline{a'}~\overline{c'}$ is a descent in positive position with letters in $\F(w)$ which is counted in $\d(u)$ and was not counted before. This implies the desires inequality and concludes the proof.
\end{proof}

\subsection{A remark on the recursive decomposition}

Thanks to Proposition~\ref{prop:B-pos}, we only need to consider the case of $n$ in a positive position in $w\in W$. However, we could have defined a recursive decomposition also in the case where $n$ is in a negative position in $w$ and all the above results would have still been valid; however the proofs would have been longer and would have required to prove a type $B$ analog of Proposition~\ref{prop:decreasing}, which holds in that case too.

\begin{defi}[The letter $n$ is in a negative position] Let $w$ in $W$ such that $n$ is in a negative position in $w$. We define the \emph{left letters} of~$w$, $\L(w)$, the \emph{forgotten letters} of~$w$, $\F(w)$, and the \emph{right letters} of~$w$, $\R(w)$,  as

\begin{align*}
\L(w) &:= \lbrace a < n \mid (a,n) \not\in \invs(w) \rbrace; \\
\F(w) &:= \overline{\L(w)}. \\
\R(w) &:= \lbrace a \mid a \not\in \L(w) \cup \F(w) \rbrace;
\end{align*}

The {\em left} (resp. {\em right}) {\em words} of $w$, $u$, and $v$ are respectively:
\begin{align*}
w_\L &= \srest{w}{\L(w)}, & w_\R &= \Bsrest{w}{\R(w)} \\
u_\L &= \srest{u}{\L(w)}, & u_\R &= \Bsrest{u}{\R(w)} \\
v_\L &= \srest{v}{\L(w)}, & v_\R &= \Bsrest{v}{\R(w)}. 
\end{align*}
\end{defi}

\begin{ex}
\label{ex:typeB-neg1}
Consider $w = 1 \bar{3} 6 2 \bar{4} \bar{5} | 5 4 \bar{2} \bar{6} 3 \bar{1}$, then $u = 1 \bar{3} 6 2 \bar{5} \bar{4} | 4 5 \bar{2} \bar{6} 3 \bar{1}$ and $v = \bar{6} \bar{4} \bar{5} \bar{3} \bar{2} \bar{1} | 1 2 3 5 4 6$ form a bipartition of $w$.  We obtain $\L(w) = \lbrace \bar{3}, 1 \rbrace$, $\F(w) = \lbrace \bar{1}, 3 \rbrace$, and $\R(w) = \lbrace \bar{6}, \bar{5}, \bar{4}, \bar{2}, 2, 4, 5, 6 \rbrace$. This gives

\begin{align*}
w &= \red{1 \bar{3}} \blue{ 6 2 \bar{4} \bar{5} | 5 4 \bar{2} \bar{6}} 3 \bar{1}, \\
u &= \red{1 \bar{3}} \blue{ 6 2 \bar{5} \bar{4} | 4 5 \bar{2} \bar{6}} 3 \bar{1}, \\
v &= \blue{\bar{6} \bar{4} \bar{5}} \red{\bar{3}} \blue{\bar{2}} \bar{1} | \red{1} \blue{2} 3 \blue{5 4 6}.
\end{align*}

\begin{align*}
w_\L &= \std(\red{1 \bar{3}}) = 2 1, & w_\R &= \Bstd(\blue{6 2 \bar{4} \bar{5} | 5 4 \bar{2} \bar{6}}) = 4 1 \bar{2} \bar{3} | 3 2 \bar{1} \bar{4}, \\
u_\L &= \std(\red{1 \bar{3}}) = 2 1, & u_\R &= \Bstd(\blue{6 2 \bar{5} \bar{4} | 4 5 \bar{2} \bar{6}}) = 4 1 \bar{3} \bar{2} | 2 3 \bar{1} \bar{4}, \\
v_\L &= \std(\red{\bar{3} 1}) = 1 2, & v_\R &= \Bstd(\blue{\bar{6} \bar{4} \bar{5} \bar{2} | 2 5 4 6}) = \bar{4} \bar{2} \bar{3} \bar{1} | 1 3 2 4. 
\end{align*}

\end{ex}

\begin{ex}
\label{ex:typeB-neg2}
Consider $w = 6 5 \bar{3} 4 \bar{1} 2 | \bar{2} 1 \bar{4} 3 \bar{5} \bar{6}$, then $u = \bar{5} 6 \bar{3} 4 \bar{2} \bar{1} | 1 2 \bar{4} 3 \bar{6} 5$, and $v = 5 \bar{6} \bar{4} \bar{3} \bar{1} 2 | \bar{2} 1 3 4 6 \bar{5}$ form a bipartition of $w$.  Since~$n$ is in first position ($\bar{n}$ in last), there are no left letters, nor forgotten letters and $w_\L = w$, $u_\L =u$, $v_\L = v$.
\end{ex}

\section{On  enumeration of bipartitions and partition-irreducible elements}\label{ss:Computations}

We present below some of the enumerative datas we obtained with Sagemath using the characterization of bipartitions of elements of $W$ given in Proposition~\ref{prop:rectangled}. For each of these examples, Conjecture~\ref{conj:1} (and therefore Conjecture~\ref{conj:2}) is satisfied for all $w\in W$ up to length specified in Table~\ref{tab:ex1} and Table~\ref{tab:ex2}.

\smallskip

In the tables below, we consider the following subsets of $W$:
\def\bip{\textnormal{Bip}}
\def\pirr{\textnormal{PIrr}}
$$
\bip(W,S)=\{w\in W\mid w\textrm{ admits at least one proper bipartition}\}
$$
and
$$
 \pirr(W,S)=\{w\in W\mid w\textrm{ is partition-irreducible}\}.
$$
We associate to these sets the following generating functions (where $k\in\mathbb N$):
$$
\bip(q,k)=\sum_{\substack{w\in \bip(W,S)\\ \ell(w)\leq k}} q^\ell(w)\quad\textrm{and}\quad\bip(q)=\sum_{w\in \bip(W,S)} q^\ell(w);
$$
$$
\pirr(q,k)=\sum_{\substack{w\in \pirr(W,S)\\ \ell(w)\leq k}} q^\ell(w)\quad\textrm{and}\quad \pirr(q)=\sum_{w\in \pirr(W,S)} q^\ell(w).
$$
For a finite Coxeter group $W$, we give the generating functions $\bip(q)$ and $ \pirr(q)$, that is, for $k=\ell(w_\circ)$. 

\begin{remarks}
\begin{enumerate}
\item In type A and B/C, some enumerative results are given in \cite[\S6]{DeDiRo17}. It would be interesting to study if those results could be easily generalized to any Coxeter system, or at least for any finite Coxeter system. In particular, Catalan numbers appear in  \cite[Lemma 6.1]{DeDiRo17}. 
\item Computations under sagemath using Pad\'e's approximant suggests that none of the generating functions in Table~\ref{tab:ex1} and Table~\ref{tab:ex2} are rationnal.
\item It would be very interesting to have a better characterization of partition-irreducible elements. 
\end{enumerate}
\end{remarks}

\begin{figure}
\def\arraystretch{1.25}
\begin{center} 
\begin{tabular}{|c|c|c|}
\hline
Type of   &   $\bip(q,k)$   & $k\in\mathbb N$ such that  \\
of $(W,S)$  & $ $   &$\ell(w)\leq k$  \\
  \hline \hline
  $A_2$& $q^3$&$3=\ell(w_\circ)$\\
  \hline
  $A_3$& $q^2 + 2q^3 + 4q^4 + 3q^5 + q^6$&$6=\ell(w_\circ)$\\
  \hline
  $A_4$& $3q^2 + 7q^3 + 10q^4 + 16q^5 + 16q^6 $&$10=\ell(w_\circ)$\\
  &$+ 15q^7 + 9q^8 + 4q^9 + q^{10}$& \\
    \hline
  $A_5$& $6q^2 + 17q^3 + 30q^4 + 43q^5 + 62q^6 + 78q^7 + 74q^8   $&$15=\ell(w_\circ)$\\
  &$+ 79q^9+ 67q^{10} + 49q^{11} + 29q^{12} + 14q^{13} + 5q^{14} + q^{15}$& \\
 \hline
  $B_2$& $q^4 $&$4=\ell(w_\circ)$\\ 
   \hline
  $B_3$& $q^2 + q^3 + 3q^4 + 2q^5 + 4q^6 + 4q^7 + 3q^8 + q^9 $&$9=\ell(w_\circ)$\\ 
   \hline
  $B_4$& $3q^2 + 6q^3 + 10q^4 + 12q^5 + 19q^6 + 17q^7  $&$16=\ell(w_\circ)$\\ 
    &$+ 21q^8 + 19q^9 + 22q^{10} + 23q^{11} + 19q^{12} $& \\
        &$+ 16q^{13} + 9q^{14} + 4q^{15} + q^{16}$& \\
     \hline
  $D_4$& $3q^2 + 4q^3 + 12q^4 + 15q^5 + 15q^6 + 15q^7 $&$12=\ell(w_\circ)$\\
       &$+ 21q^8 + 15q^9 + 9q^{10} + 4q^{11} + q^{12} $& \\
    \hline
  $D_5$& $ 6q^2 + 14q^3 + 26q^4 + 44q^5 + 65q^6 + 78q^7 + 99q^8  $&$20=\ell(w_\circ)$\\
       &$+ 114q^9 + 103q^{10} + 115q^{11} + 122q^{12} + 101q^{13}  + 100q^{14}   $& \\
       &$+ 80q^{15} + 54q^{16} + 30q^{17} + 14q^{18} + 5q^{19} + q^{20} $& \\
 \hline
  $F_4$& $ 3q^2 + 6q^3 + 7q^4 + 12q^5 + 10q^6 + 20q^7 + 24q^8 + 26q^9    $&$24=\ell(w_\circ)$\\
       &$+ 26q^{10} + 22q^{11} + 28q^{12} + 22q^{13} + 26q^{14} + 26q^{15} + 28q^{16}   $& \\
       &$+ 24q^{17}+ 26q^{18} + 20q^{19} + 21q^{20} + 16q^{21} + 9q^{22} + 4q^{23} + q^{24}  $& \\
 \hline
  $H_3$& $q^2 + q^3 + 2q^4 + q^5 + 3q^6 + 2q^7 + 4q^8   $&$15=\ell(w_\circ)$\\ 
    &$+ 2q^9 + 3q^{10} + 2q^{11} + 4q^{12} + 4q^{13} + 3q^{14} + q^{15}$& \\
\hline
 \hline
  $\tilde A_2$&$3q^3 + 6q^5 + 6q^7 + 6q^9 + 6q^{11}  $&$12$\\
   \hline
  $\tilde B_2$& $ 2q^2 + 4q^3 + 16q^4 + 20q^5 + 12q^6 + 32q^7   $&$12$\\ 
    &$+ 44q^8 + 24q^9 + 44q^{10} + 56q^{11} + 36q^{12}  $& \\
    \hline
  $\tilde G_2$& $q^2 + 2q^4 + 4q^5 + 2q^6 + 2q^7    $&$12$\\ 
    &$+ 2q^8 + 4q^9 + 4q^{10} + 2q^{12} $& \\

    \hline \hline
  \begin{tikzpicture}
	[scale=1,
	 sommet/.style={inner sep=2pt,circle,draw=black,fill=blue,thick,anchor=base},
	 rotate=0,
	 baseline = 0]
 \tikzstyle{every node}=[font=\small]
\coordinate (ancre) at (0,-0.0);
\node  at ($(ancre)+(0,0.6)$) {};
\node[sommet] (a2) at ($(ancre)+(0.25,0.5)$) {};
\node[sommet] (a3) at ($(ancre)+(0.5,0)$) {} edge[thick] node[auto,swap,right] {$ $} (a2) ;
\node[sommet] (a4) at (ancre) {} edge[thick] node[auto,swap,below] {$ $} (a3) edge[thick] node[auto,swap,left] {$4$} (a2);
\end{tikzpicture}
&$ 2q^3 + q^4 + 2q^5 + 4q^6 + 6q^8+ 4q^{10} + 2q^{11} + 2q^{12}$&$12$\\
  \hline
   \begin{tikzpicture}
	[scale=1,
	 sommet/.style={inner sep=2pt,circle,draw=black,fill=blue,thick,anchor=base},
	 rotate=0,
	 baseline = 0]
 \tikzstyle{every node}=[font=\small]
\coordinate (ancre) at (0,-0.0);
\node  at ($(ancre)+(0,0.6)$) {};
\node[sommet] (a1) at ($(ancre)+(0,0.5)$) {};
\node[sommet] (a2) at ($(ancre)+(0.5,0.5)$) {} edge[thick] node[auto,swap,right] {} (a1) edge[thick] node[auto,swap,left] {} (a3);
\node[sommet] (a3) at ($(ancre)+(0.5,0)$) {} edge[thick] node[auto,swap,right] {} (a1) ;
\node[sommet] (a4) at (ancre) {} edge[thick] node[auto,swap,below] {} (a3) ;
\end{tikzpicture}
&$ 2q^2 + 6q^3 + 9q^4 + 16q^5 + 16q^6 + 22q^7  $&$12$\\
& $+ 27q^8 + 28q^9 + 32q^{10} + 40q^{11} + 50q^{12}$& \\
  \hline
\end{tabular}
  \captionof{table}{Generating functions of elements admitting a bipartition up to length $k$}\label{tab:ex1}
\end{center}
\end{figure}

\begin{figure}
\def\arraystretch{1.25}
\begin{center} 
\begin{tabular}{|c|c|c|}
\hline
Type of   &   $\pirr(q,k)$   & $k\in\mathbb N$ such that  \\
of $(W,S)$  & $ $   &$\ell(w)\leq k$  \\
  \hline \hline
  $A_2$& $1+2q+2q^2$&$3=\ell(w_\circ)$\\
  \hline
  $A_3$& $1 + 3q + 4q^2 + 4q^3 + q^4$&$6=\ell(w_\circ)$\\
  \hline
  $A_4$& $1 + 4q + 6q^2 + 8q^3 + 10q^4 + 6q^5 + 4q^6$&$10=\ell(w_\circ)$\\
    \hline
  $A_5$& $1 + 5q + 8q^2 + 12q^3 + 19q^4 + 28q^5 $&$15=\ell(w_\circ)$\\
  &$+ 28q^6 + 23q^7 + 27q^8 + 11q^9 + 4q^{10}$& \\
 \hline
  $B_2$& $1 + 2q + 2q^2 + 2q^3 $&$4=\ell(w_\circ)$\\ 
   \hline
  $B_3$& $1 + 3q + 4q^2 + 6q^3 + 5q^4 + 6q^5 + 3q^6 + q^7 $&$9=\ell(w_\circ)$\\ 
   \hline
  $B_4$& $1 + 4q + 6q^2 + 10q^3 + 14q^4 + 20q^5 + 20q^6  $&$16=\ell(w_\circ)$\\ 
    &$+ 27q^7 + 25q^8 + 25q^9 + 17q^{10} + 9q^{11} + 5q^{12}$& \\
     \hline
  $D_4$& $1 + 4q + 6q^2 + 12q^3 + 11q^4 + 13q^5 $&$12=\ell(w_\circ)$\\
       &$+ 15q^6 + 13q^7 + 2q^8 + q^9$& \\
    \hline
  $D_5$& $ 1 + 5q + 8q^2 + 16q^3 + 28q^4 + 41q^5 + 55q^6  $&$20=\ell(w_\circ)$\\
       &$+ 77q^7 + 86q^8 + 91q^9+ 109q^{10} + 90q^{11}  $& \\
       &$+ 63q^{12} + 54q^{13} + 20q^{14} + 5q^{15} $& \\
 \hline
  $F_4$& $ 1 + 4q + 6q^2 + 10q^3 + 18q^4 + 24q^5 + 38q^6 + 40q^7   $&$24=\ell(w_\circ)$\\
       &$+ 47q^8+ 54q^9 + 61q^{10} + 70q^{11} + 66q^{12} + 70q^{13}   $& \\
       &$+ 61q^{14} + 54q^{15} + 43q^{16} + 36q^{17} + 22q^{18} + 16q^{19} + 4q^{20} $& \\
 \hline
  $H_3$& $1 + 3q + 4q^2 + 6q^3 + 7q^4 + 10q^5 + 9q^6 + 10q^7  $&$15=\ell(w_\circ)$\\ 
    &$+ 8q^8 + 10q^9 + 8q^{10} + 7q^{11} + 3q^{12} + q^{13}$& \\
\hline
 \hline
  $\tilde A_2$&$1 + 3q + 6q^2 + 6q^3 + 12q^4 + 9q^5 + 18q^6   $&$12$\\ 
    &$+ 15q^7 + 24q^8 + 21q^9 + 30q^{10} + 27q^{11} + 36q^{12}$& \\
    \hline
  $\tilde B_2$& $ 1 + 3q + 4q^2 + 8q^3 + 9q^4 + 9q^5 + 14q^6    $&$12$\\ 
    &$+ 17q^7 + 19q^8 + 20q^9 + 23q^{10} + 29q^{11} + 30q^{12}$& \\
    \hline
  $\tilde G_2$& $1 + 3q + 4q^2 + 6q^3 + 7q^4 + 12q^5 + 13q^6   $&$12$\\ 
    &$+ 15q^7 + 16q^8 + 19q^9 + 23q^{10} + 23q^{11} + 27q^{12} $& \\

    \hline \hline
  \begin{tikzpicture}
	[scale=1,
	 sommet/.style={inner sep=2pt,circle,draw=black,fill=blue,thick,anchor=base},
	 rotate=0,
	 baseline = 0]
 \tikzstyle{every node}=[font=\small]
\coordinate (ancre) at (0,-0.0);
\node  at ($(ancre)+(0,0.6)$) {};
\node[sommet] (a2) at ($(ancre)+(0.25,0.5)$) {};
\node[sommet] (a3) at ($(ancre)+(0.5,0)$) {} edge[thick] node[auto,swap,right] {$ $} (a2) ;
\node[sommet] (a4) at (ancre) {} edge[thick] node[auto,swap,below] {$ $} (a3) edge[thick] node[auto,swap,left] {$4$} (a2);
\end{tikzpicture}
&$ 1 + 3q + 6q^2 + 8q^3 + 14q^4 + 20q^5 + 27q^6 + 44q^7 $&$12$\\
&$+ 56q^8+ 87q^9 + 118q^{10} + 169q^{11} + 238q^{12} $& \\

  \hline
   \begin{tikzpicture}
	[scale=1,
	 sommet/.style={inner sep=2pt,circle,draw=black,fill=blue,thick,anchor=base},
	 rotate=0,
	 baseline = 0]
 \tikzstyle{every node}=[font=\small]
\coordinate (ancre) at (0,-0.0);
\node  at ($(ancre)+(0,0.6)$) {};
\node[sommet] (a1) at ($(ancre)+(0,0.5)$) {};
\node[sommet] (a2) at ($(ancre)+(0.5,0.5)$) {} edge[thick] node[auto,swap,right] {} (a1) edge[thick] node[auto,swap,left] {} (a3);
\node[sommet] (a3) at ($(ancre)+(0.5,0)$) {} edge[thick] node[auto,swap,right] {} (a1) ;
\node[sommet] (a4) at (ancre) {} edge[thick] node[auto,swap,below] {} (a3) ;
\end{tikzpicture}
&$ 1 + 4q + 8q^2 + 14q^3 + 26q^4 + 41q^5 + 73q^6 $&$12$\\
& $+ 114q^7 + 178q^8 + 278q^9 + 422q^{10} + 631q^{11} + 939q^{12}$& \\
  \hline
\end{tabular}
  \captionof{table}{Generating functions of partition-irreducible elements up to length $k$}\label{tab:ex2}
\end{center}
\end{figure}


\bibliographystyle{plain}

\begin{thebibliography}{10}

\bibitem{BeKu06}
Prakash Belkale and Shrawan Kumar.
\newblock Eigenvalue problem and a new product in cohomology of flag varieties.
\newblock {\em Invent. Math.}, 166(1):185--228, 2006.

\bibitem{BjBr05}
Anders Bj\"{o}rner and Francesco Brenti.
\newblock {\em Combinatorics of Coxeter Groups}, volume 231 of {\em Graduate
  Texts in Mathematics}.
\newblock Springer, New York, 2005.

\bibitem{BrHo93}
Brigitte Brink and Robert~B. Howlett.
\newblock A finiteness property and an automatic structure for {C}oxeter
  groups.
\newblock {\em Math. Ann.}, 296:179--190, 1993.

\bibitem{DeDiRo17}
R.~Dewji, I.~Dimitrov, A.~McCabe, M.~Roth, D.~Wehlau, and J.~Wilson.
\newblock Decomposing inversion sets of permutations and applications to faces
  of the {L}ittlewood-{R}ichardson cone.
\newblock {\em J. Algebraic Combin.}, 45(4):1173--1216, 2017.

\bibitem{DiRo09}
Ivan Dimitrov and Mike Roth.
\newblock Geometric realization of {PRV} components and the
  {L}ittlewood-{R}ichardson cone.
\newblock In {\em Symmetry in mathematics and physics}, volume 490 of {\em
  Contemp. Math.}, pages 83--95. Amer. Math. Soc., Providence, RI, 2009.

\bibitem{DiRo17}
Ivan Dimitrov and Mike Roth.
\newblock Cup products of line bundles on homogeneous varieties and generalized
  {PRV} components of multiplicity one.
\newblock {\em Algebra Number Theory}, 11(4):767--815, 2017.

\bibitem{Dy90}
Matthew~J. Dyer.
\newblock Reflection subgroups of {C}oxeter systems.
\newblock {\em J. Algebra}, 135(1):57--73, 1990.

\bibitem{HoLa16}
Christophe Hohlweg and Jean-Philippe Labb\'e.
\newblock On {I}nversion {S}ets and the {W}eak {O}rder in {C}oxeter {G}roups.
\newblock {\em European Journal of Combinatorics}, 55:1--19, 2016.

\bibitem{Ka13}
Lukas Katth\"an.
\newblock Decomposing sets of inversions.
\newblock {\em Electron. J. Combin.}, 20(1):Paper 40, 16, 2013.

\bibitem{FrRe23}
Nicolas Ressayre and Luca Francone.
\newblock Intersection multiplicity one for the belkale-kumar product in g/b,
  2023.

\bibitem{sage}
William~A. Stein et~al.
\newblock {\em {S}age {M}athematics {S}oftware ({V}ersion 9.5)}.
\newblock The Sage Development Team, 2011.
\newblock \url{http://www.sagemath.org}.

\end{thebibliography}

\end{document}